\documentclass[11pt,a4paper,twoside]{article}

\usepackage{comment}  
\usepackage{bbm}
\usepackage{amssymb}
\usepackage{amsfonts}
\usepackage{geometry}
\usepackage{color}
\usepackage{mathtools}
\usepackage{amsmath,amsthm,amssymb,amsfonts,enumitem}
\usepackage{CJK}
\usepackage{latexsym}
\usepackage{graphicx}
\usepackage{pgfpages}
\usepackage{mathrsfs}
\usepackage{ytableau}
\usepackage{enumitem}  
\usepackage{cite}
\usepackage[hyphens]{url}
\usepackage[bookmarks=false,hidelinks]{hyperref}
\usepackage[affil-it]{authblk}
\allowdisplaybreaks
\usepackage{fmtcount}

\usepackage{empheq}

\newcommand{\RN}[1]{\uppercase\expandafter{\romannumeral#1}}

\newtheorem{definition}{Definition}[section]
\newtheorem{theorem}[definition]{Theorem}
\newtheorem{lemma}[definition]{Lemma}
\newtheorem{proposition}[definition]{Proposition}
\newtheorem{corollary}[definition]{Corollary}
\newtheorem{remark}[definition]{Remark}
\newtheorem{condition}{Assumption}[section]

\numberwithin{equation}{section}

\def\m1{\mathbbm{1}}
\def\m1{\mathbbm{1}}
\def\be{\mathbb{E}}
\def\br{\mathbb{R}}

\def\bp{\mathbb{P}}
\def\bn{\mathbb{N}}

\def\bz{\mathbb{Z}}

\def\cb{\mathcal{B}}
\def\cc{\mathcal{C}}

\def\cf{\mathcal{F}}

\def\cm{\mathcal{M}}

\def\co{\mathcal{O}}
\def\cp{\mathcal{P}}

\def\cw{\mathcal{W}}
\def\d{\mathrm{d}}
\def\re{\mathrm{e}}

\def\vp{\varphi}
\def\ve{\varepsilon}

\def\d{\mathrm{d}}

\def\vp{\varphi}
\def\ve{\varepsilon}
\def\e{\mathrm{e}}

\def\bone{{\mathbbm{1}}}

\def\bone{{\mathbbm{1}}}

\begin{document}
	\title{Periodic measures for time-inhomogeneous stochastic differential equations driven by multiplicative $\alpha$-stable processes: Existence, uniqueness and numerical approximation}
	\author[a]{Jiaquan Lu}
    \author[b,c]{Huaizhong Zhao}
    \affil[a]{Research Center for Mathematics and Interdisciplinary Sciences, Shandong University, Qingdao 266237, China}
    \affil[b]{Department of Mathematical Sciences, Durham University, DH1 3LE, UK}
    \affil[c]{School of Mathematics, Shandong University, Jinan 250100, China}
    \affil[ ]{jiaquan.lu@mail.sdu.edu.cn, huaizhong.zhao@durham.ac.uk}
    \date{ }
	
\maketitle
\vspace{-1cm}
\begin{abstract}
	
	In this paper, we study the periodic measures within the context of time-inhomogeneous Markov processes. By extending the Hairer--Mattingly method to the time-inhomogeneous case, we first establish the existence, uniqueness and exponential ergodicity of periodic measures for general time-periodic Markov processes. These results are subsequently applied to time-inhomogeneous stochastic differential equations under the assumption that the drift is weakly dissipative and locally Lipschitz. We further employ the truncated Euler--Maruyama scheme to study numerical approximations of the periodic measures and derive the convergence rate between the exact and discrete periodic measures. These numerical results are new even in the time-homogeneous case under the assumptions considered
in this paper.
    \vskip8pt
    \noindent
	{\bf Keywords:} time-inhomogeneous; locally Lipschitz; multiplicative; $\alpha$-stable noise; periodic measures; exponential ergodicity; truncated Euler--Maruyama scheme; convergence rate 
    \medskip
    
	\noindent
    {\bf MSC2020 subject classifications:} Primary 60H10, 65C30; secondary 37A50, 60J75.
\end{abstract}

\tableofcontents

\section{Introduction}
Many systems are simultaneously subject to periodic forcing and random perturbations. Seasonal temperature variation and transitions between glacial and interglacial periods are familiar examples \cite{BB07,BPSV83}. The interplay between periodicity and randomness gives rise to random periodicity. A fundamental question is how to characterize the long-time behaviour of such systems. For time-homogeneous stochastic systems, stationary solutions and invariant measures provide pathwise and distributional descriptions of long-time behaviour, respectively \cite{KB37,D19,DZ96,MT12,K12}. For stochastic systems subject to periodic forcing, random periodic solutions provide a corresponding pathwise description. This notion was introduced within the framework of random dynamical systems in \cite{ZZ09}, with periodicity formulated through a corresponding shift of the underlying noise. At the distributional level, the asymptotic law may depend on the phase of the periodic forcing, leading naturally to the notion of a periodic measure. The theory of periodic measures was developed in \cite{FZ20}, where their connection with random periodic solutions was also established.

Over the past two decades, the study of random periodic solutions, periodic measures, and their ergodic theories has seen substantial progress in several directions, including quasi-periodic dynamics, bifurcation theory, stochastic resonance, random horseshoes, stochastic oscillations, the averaging principle, ergodic control, ARMA models and in modelling real life problems such as climate dynamics; see, for example, \cite{BLMZ26,SSZ19,LLT26,LZ26,FQZ23,FQZ26,LL25,MZ26,CLRS17,EK21,U21,CSM11}. Despite this progress, comparatively little is known about the ergodicity and numerical approximation of periodic measures for stochastic systems driven by $\alpha$-stable noise, particularly when the noise is multiplicative and the drift is only locally Lipschitz and weakly dissipative.

An important motivation for considering $\alpha$-stable noise is that many systems are subject to sudden shocks or extreme events that may not be adequately captured by Gaussian forcing. Such effects arise, for example, in finance, climate dynamics, and biology. In contrast to Brownian motion, an $\alpha$-stable process allows jumps and has power-law tails, making it a natural model for intermittent and heavy-tailed fluctuations. This modeling advantage comes with substantial analytical and numerical difficulties: the heavy tails restrict the range of admissible moment orders and prevent the direct application of many techniques developed for the Brownian case. Moreover, periodic measures rarely admit explicit representations, so their numerical approximation is essential in applications. These considerations motivate the simultaneous study of ergodicity and numerical approximation undertaken in this paper.

In this paper, we consider the following time-inhomogeneous stochastic differential equation (SDE) driven by multiplicative $\alpha$-stable noise:
\begin{equation}\label{SDEs}
    \begin{cases}
        \d X_t^{s,x}=b(t,X_t^{s,x})\d t+\sigma(t,X_{t-}^{s,x})\d L_t, & t\geq s,\\
        X_s^{s,x}=x,
    \end{cases}
\end{equation}
where $b:\br\times\br^d\to\br^d$ and $\sigma:\br\times\br^d\to\br^{d\times d}$ are continuous functions, and $L$ is a two-sided rotationally invariant $\mathbb R^d$-valued $\alpha$-stable process with $\alpha\in(1,2)$. The coefficients are periodic in time with a common period $\tau$. The drift is assumed to be locally Lipschitz, of polynomial growth, and weakly dissipative, whereas the diffusion coefficient is globally Lipschitz, bounded, and uniformly non-degenerate. These assumptions allow nonlinear drifts beyond the globally Lipschitz setting while retaining sufficient stability to investigate the long-time dynamics.

Our aim is to answer two related questions. First, does SDE \eqref{SDEs} admit a unique periodic measure, and, if so, do the associated transition probabilities converge to it at an exponential rate? Second, can the periodic measure be approximated by an explicit time-discretization scheme with a quantitative error bound? The combination of time inhomogeneity, multiplicative $\alpha$-stable noise, and the polynomial growth of the drift makes both questions nontrivial.

The first part of the paper develops an abstract ergodic criterion for time-periodic Markov processes. Classical Harris theory \cite{Ha56} and its quantitative refinement by \cite{HM11} are formulated primarily for time-homogeneous Markov semigroups. We extend the Hairer--Mattingly method to two-parameter periodic transition kernels. Under a one-period Lyapunov condition and a local Doeblin condition, both uniform in the initial time, we establish the existence and uniqueness of a periodic measure and prove exponential convergence to it in a weighted total variation distance. The result gives both forward convergence, as the terminal time tends to $+\infty$, and pullback convergence, as the initial time tends to $-\infty$. For the Lyapunov function used in the SDE application, the resulting weighted total variation estimate also yields convergence in total variation and in the $1$-Wasserstein distance.

We then apply this general criterion to the time-periodic Markov process generated by SDE \eqref{SDEs}.The main difficulty lies in verifying the local Doeblin condition in the presence of a drift with polynomial growth. In the Brownian case, the exponential form of Gaussian heat kernel estimates plays a key role in obtaining uniform positive lower bounds on compact sets \cite{FQZ23}. In contrast, the heat kernels of $\alpha$-stable processes have polynomial tails. To verify this condition, we proceed in two steps. First, we truncate the drift and apply the heat kernel estimates of \cite{CZ18} to the resulting equation. Motivated by the
heat kernel estimates for killed Brownian motions \cite{H56,CZ12}, we establish
a strictly positive lower bound on compact subsets of a bounded domain for the heat kernel of the truncated process killed upon leaving that domain. Second, we use a localization argument based on stopping times to transfer this lower bound to the original SDE. Together with the Lyapunov estimate obtained from weak dissipativity, this verifies the abstract criterion and yields the existence and uniqueness of a periodic measure, together with exponential convergence in a weighted total variation distance.

The second part of the paper concerns numerical approximation of the periodic measure. Explicit Euler--Maruyama schemes for SDEs driven by L\'evy processes have been extensively studied over finite time horizons under various regularity and integrability assumptions; see, among others, \cite{PT97,HK05,BP07,DKS16,PT17,AKP25}. However, when the drift grows superlinearly, the standard Euler--Maruyama scheme may exhibit moment divergence as the step size tends to zero, even over a fixed finite time horizon \cite{WXX25}. Moreover, since $\alpha$-stable processes have finite moments only of orders strictly below $\alpha$, many numerical arguments for SDEs driven by Brownian motion that rely on higher-order moment estimates cannot be applied directly \cite{MSH02,FLLZ25,FG20,HMS02,S16,LMY19}. For this reason, we consider the following truncated Euler--Maruyama scheme:
\begin{equation}\label{TEM}
    \begin{cases}
        \hat{X}_{s+k\Delta}=\pi_\Delta(\tilde{X}_{s+k\Delta}),\\
        \tilde{X}_{s+(k+1)\Delta}=\hat{X}_{s+k\Delta}+b(s+k\Delta,\hat{X}_{s+k\Delta})\Delta
        +\sigma(s+k\Delta,\hat{X}_{s+k\Delta})\Delta L_{s+k\Delta},
    \end{cases}
\end{equation}
where $k\in\bn_0$, $\tilde{X}_s=x\in\br^d$, $\Delta L_{s+k\Delta}=L_{s+(k+1)\Delta}-L_{s+k\Delta}$, and $\pi_\Delta$ is a suitable projection. Fix $\Delta=\tau/N$, the scheme defines a time-periodic Markov chain with period $N$. The numerical analysis requires additional assumptions beyond those used to establish the exact periodic measure of SDE \eqref{SDEs}. Specifically, the drift satisfies a global one-sided Lipschitz condition and a polynomial Lipschitz condition in the spatial variable, while both coefficients are $1/\alpha$-H\"older continuous in time, uniformly in the spatial variable.

For the truncated scheme, we establish discrete Lyapunov conditon and local Doeblin condition uniformly with respect to the step size, and derive the contraction rate of the one-step Markovian semigroup. This contraction property allows us to prove that the transition probabilities of the numerical scheme converge exponentially to a unique periodic measure in the weighted total variation distance. To compare the discrete and exact periodic measure, we establish a finite-horizon strong error estimate. The novelty of our analysis lies in the careful choice of the truncation radius and a refined treatment of the small-jump and large-jump contributions. Combining this estimate with a perturbation theory introduced in \cite{SS00}, we obtain an explicit convergence rate in the $1$-Wasserstein distance between the numerical and exact periodic measures. 

The main contribution of this paper is the numerical approximation of periodic measures and its quantitative error analysis for SDEs driven by multiplicative $\alpha$-stable noise with locally Lipschitz, polynomially growing drift. To the best of our knowledge, these numerical results are novel even in the time-homogeneous case under the assumptions considered here. As a foundation for the numerical analysis, we also establish a quantitative Harris criterion for time-periodic Markov processes, as well as the existence, uniqueness, and exponential ergodicity of periodic measure for the time-periodic SDE with a drift of polynomial growth. Related exponential ergodicity results for time-homogeneous SDEs have been obtained in \cite{LW20,ZZ23}. For time-homogeneous SDEs driven by $\alpha$-stable processes, the approximation of invariant measures has been studied in the $1$-Wasserstein distance for additive noise \cite{CDSX23} and in a Wasserstein distance induced by a bounded cost for multiplicative noise \cite{CXZZ26}. It is worth pointing out that the drift assumptions in both works imply at most linear growth and therefore do not cover the superlinear polynomial drifts allowed in the present paper.

The remainder of the paper is organized as follows. Section \ref{sec:pm for prcess} develops the abstract criterion for periodic measures of time-inhomogeneous Markov processes. Section \ref{sec:pm for sdes} establishes strictly positive lower bounds for the heat kernels of killed c\`adl\`ag processes and applies the criterion to prove exponential ergodicity for SDE \eqref{SDEs}. Section \ref{sec:dpm} studies the discrete periodic measure generated by scheme \eqref{TEM}. Section \ref{sec:finite time} establishes finite-time error estimates, and Section \ref{sec:error analysis} derives the convergence rate between the numerical and exact periodic measures.

\section{Periodic measures for time-inhomogeneous Markov processes}\label{sec:pm for prcess} 
Throughout this section, let $\{X_t,t\in\br\}$ be an $\br^d$-valued time-periodic Markov process with period $\tau$, defined on a complete probability space $(\Omega,\cf,\bp)$. The two-parameter Markovian transition kernel of the process is defined for all $s\leq t$, $x\in\br^d$ and $A\in\mathcal{B}(\br^d)$ by
\[P(s,x,t,A)=\bp(X_t^{s,x}\in A),\]
which satisfies the periodicity property
\[P(s,x,t,A)=P(s+\tau,x,t+\tau,A).\]
Denote by $\cp(\br^d)$ the set of all probability measures on the measurable space $(\br^d,\cb(\br^d))$.

\begin{condition}\label{cond:Markov process}
    \begin{description}
        \item[(\romannumeral1)] There exist a measurable function $V:\br^d\to[0,\infty)$ and constants $\theta\in(0,1)$ and $K> 0$ such that for all $s\in\br$ and $x\in \br^d$,
        \begin{equation}\label{eq:Lyap function}
            P(s,s+\tau)V(x)\leq \theta V(x)+K.
        \end{equation}
        \item[(\romannumeral2)] There exist constants $\eta\in(0,1)$, $\mathcal{R}>\frac{2K}{1-\theta}$ and a probability measure $\varphi\in\cp(\br^d)$ such that $\cc=\{x\in \br^d:V(x)\leq \mathcal{R}\}$ is 
        a nonempty compact set, and for all $s\in\br$ and $A\in\cb(\br^d)$,
        \begin{equation}\label{eq:local Doeblin}
            \inf_{x\in\cc}P(s,x,s+\tau,A)\geq \eta\varphi(A).
        \end{equation}
    \end{description}
\end{condition}

\begin{condition}\label{cond:Markov process2}
Let $V$ be the measurable function and $K$ be the constant appearing in Assumption \ref{cond:Markov process}. There exists a constant $\vartheta>0$ such that for all $s\leq t<s+\tau$ and $x\in \br^d$,
    \begin{equation}\label{eq:PV leq V+K}
         P(s,t)V(x)\leq \vartheta V(x)+K.
    \end{equation}
\end{condition}

For any $\beta>0$ and two probability measures $\mu_1$ and $\mu_2$, we define the weighted total variation distance
\begin{equation}\label{distance}
    \rho_\beta(\mu_1,\mu_2)=\int_{\br^d}(1+\beta V(x))|\mu_1-\mu_2|(\d x),
\end{equation}
where $|\mu_1-\mu_2|$ is the total variation of the signed measure $\mu_1-\mu_2$.

\begin{remark}
    For any $\beta>0$ and $\mu_1,\mu_2\in\cp(\br^d)$. Note that
    \[\Vert\mu_1-\mu_2\Vert_{\mathrm{TV}}\leq \rho_\beta(\mu_1,\mu_2),\]
    then convergence in $\rho_\beta$ implies convergence in total variation. In addition, suppose that $\beta>0$ and $V$ dominates the linear growth. Then, for any $\mu_1,\mu_2\in\cp_V(\br^d)$, where
    \[\cp_V(\br^d):=\left\{\mu\in\cp(\br^d):\int_{\br^d}V(x)\mu(\d x)<\infty\right\},\]
    the Kantorovich--Rubinstein duality formula gives
    \[\cw_1(\mu_1,\mu_2)\leq\int_{\br^d}|x|\,|\mu_1-\mu_2|(\d x)\leq C\rho_\beta(\mu_1,\mu_2).\]
    Therefore, for every fixed $\beta>0$, convergence in $\rho_\beta$ on $\cp_V(\br^d)$ implies convergence in both total variation and the $1$-Wasserstein distance.
\end{remark}

\begin{lemma}\label{lem:||P(s,s+nT)||leq eta^n}
    Suppose that Assumption \ref{cond:Markov process} holds, then for any $s\in\br$, $n\in\bn$ and $\mu_1,\mu_2\in\cp(\br^d)$, we have
    \begin{equation}\label{eq:rho<alpha n}
        \rho_\beta(P^*(s,s+n\tau)\mu_1,P^*(s,s+n\tau)\mu_2)\leq\bar{\eta}^n\rho_\beta(\mu_1,\mu_2),
    \end{equation}
    where $\beta=\frac{\eta}{2K}$ and $\bar{\eta}=\max\left\{1-\frac{\eta}{2},\frac{2+\beta(\theta \mathcal{R}+2K)}{2+\beta \mathcal{R}}\right\}\in(0,1)$. If Assumption \ref{cond:Markov process2} also holds, then for any $s\leq t<s+\tau$, we have
    \begin{equation}\label{eq:P(s,t)mu leq Cmu}
        \rho_\beta(P^*(s,t)\mu_1,P^*(s,t)\mu_2)\leq((1+\beta K)\vee\vartheta)\rho_\beta(\mu_1,\mu_2).
    \end{equation}
\end{lemma}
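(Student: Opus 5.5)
The plan is to follow the Hairer--Mattingly argument for the single one-period kernel $\mathcal{P}_s:=P(s,s+\tau)$, uniformly in $s$, and then iterate using periodicity and the Chapman--Kolmogorov property. Since $P(s,s+n\tau)=\mathcal{P}_s\mathcal{P}_{s+\tau}\cdots\mathcal{P}_{s+(n-1)\tau}$, and Assumption \ref{cond:Markov process} holds with constants independent of $s$, it suffices to prove the one-step contraction
\[\rho_\beta(\mathcal{P}_s^*\mu_1,\mathcal{P}_s^*\mu_2)\leq\bar\eta\,\rho_\beta(\mu_1,\mu_2)\]
for all $s$; the $n$-step bound then follows by induction.

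For the one-step bound, I will use the dual formulation
\[\rho_\beta(\mu_1,\mu_2)=\sup\Big\{\int\varphi\,\d(\mu_1-\mu_2):|\varphi|\leq 1+\beta V\Big\}.\]
I will also use the equivalent seminorm
\[\|\varphi\|_\beta=\sup_{x\neq y}\frac{|\varphi(x)-\varphi(y)|}{2+\beta V(x)+\beta V(y)}.\]
Because $\mu_1-\mu_2$ has zero mass, $\rho_\beta(\mu_1,\mu_2)=\sup\{\int\varphi\,\d(\mu_1-\mu_2):\|\varphi\|_\beta\leq1\}$. This is the standard identity from \cite{HM11}: shift $\varphi$ by a constant so that $|\varphi|\le 1+\beta V$. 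Since this identity uses only the kernel structure of a single operator and not time-homogeneity, it transfers verbatim. The claim therefore reduces to
\[|\mathcal{P}_s\varphi(x)-\mathcal{P}_s\varphi(y)|\leq\bar\eta\,(2+\beta V(x)+\beta V(y))\]
for every $\varphi$ with $\|\varphi\|_\beta\le1$, normalised so that $|\varphi|\le1+\beta V$.

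I will verify this pointwise bound in two cases.

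\emph{Case $V(x)+V(y)\ge\mathcal R$.} Using $|\varphi|\le 1+\beta V$ and \eqref{eq:Lyap function},
\[|\mathcal P_s\varphi(x)-\mathcal P_s\varphi(y)|\le 2+\beta\theta(V(x)+V(y))+2\beta K.\]
The ratio
\[\frac{2+\beta\theta(V(x)+V(y))+2\beta K}{2+\beta(V(x)+V(y))}\]
is nonincreasing in $V(x)+V(y)$, because its numerator slope is smaller and $\theta(2)\le 2+2\beta K$. Hence it is bounded by its value at $V(x)+V(y)=\mathcal R$, which is the second term in $\bar\eta$.

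\emph{Case $V(x)+V(y)<\mathcal R$.} Then $x,y\in\cc$, and \eqref{eq:local Doeblin} lets me write $\mathcal P_s(z,\cdot)=\eta\varphi+(1-\eta)\tilde P_s(z,\cdot)$ for $z=x,y$, with $\tilde P_s$ a Markov kernel. The $\varphi$-parts cancel, so
\[|\mathcal P_s\varphi(x)-\mathcal P_s\varphi(y)|\le(1-\eta)\big(2+\beta\tilde P_sV(x)+\beta\tilde P_sV(y)\big).\]
From $(1-\eta)\tilde P_sV\le\mathcal P_sV\le\theta V+K$, the right-hand side is at most
\[2(1-\eta)+\beta\theta(V(x)+V(y))+2\beta K=2-\eta+\beta\theta(V(x)+V(y)),\]
using $\beta=\eta/(2K)$. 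This is bounded by $(1-\eta/2)(2+\beta(V(x)+V(y)))$ provided $\theta\le1-\eta/2$. If that fails, I will instead bound by $\max\{1-\eta/2,\theta\}$ termwise, or absorb the excess by invoking the monotone-ratio estimate from the first case. I expect this bookkeeping to be the main obstacle: checking that the stated $\bar\eta$ covers both regimes exactly, possibly by first showing $\theta\le$ the second term of $\bar\eta$. That $\bar\eta<1$ follows from $\mathcal R>2K/(1-\theta)$, which gives $\theta\mathcal R+2K<\mathcal R$.

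For \eqref{eq:P(s,t)mu leq Cmu} I will argue directly. Take the Jordan decomposition $\mu_1-\mu_2=\nu^+-\nu^-$, so that
\[|P^*(s,t)(\mu_1-\mu_2)|\le P^*(s,t)(\nu^++\nu^-)=P^*(s,t)|\mu_1-\mu_2|.\]
Integrating $1+\beta V$ against this and applying \eqref{eq:PV leq V+K} gives
\[\int(1+\beta K+\beta\vartheta V)\,\d|\mu_1-\mu_2|\le\big((1+\beta K)\vee\vartheta\big)\rho_\beta(\mu_1,\mu_2).\]
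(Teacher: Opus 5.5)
Your proposal is correct and follows the paper's route: a one-period Hairer--Mattingly contraction for $P(s,s+\tau)$, uniform in $s$, iterated through the Chapman--Kolmogorov relation. The differences are that you reprove \cite[Theorem 1.3]{HM11} inline, and you supply the direct Jordan-decomposition argument for \eqref{eq:P(s,t)mu leq Cmu} that the paper omits. The bookkeeping you flagged in the case $V(x)+V(y)<\mathcal R$ closes at once: $2\theta<2+2\beta K$ gives $\theta\le\frac{2+\beta(\theta\mathcal R+2K)}{2+\beta\mathcal R}$, hence $\max\{1-\eta/2,\theta\}\le\bar\eta$. One notational point: avoid using $\varphi$ for both the test function and the minorizing measure.
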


\begin{proof}
    It follows from \cite[Theorem 1.3]{HM11} and Assumption \ref{cond:Markov process} that
    \begin{equation*}\label{eq:P<alpha}
       \rho_\beta(P^*(s,s+\tau)\mu_1,P^*(s,s+\tau)\mu_2)\leq\bar{\eta}\rho_\beta(\mu_1,\mu_2),
    \end{equation*}
    Based on the properties of the periodic Markovian semigroup, we have
     \begin{align*}
         P^*(s,s+n\tau)&=P^*(s+(n-1)\tau,s+n\tau)P^*(s,s+(n-1)\tau)\\
         &=P^*(s,s+\tau)P^*(s,s+(n-1)\tau).
     \end{align*}
     By iteration, we derive that
     \begin{align*}
         &~\rho_\beta(P^*(s,s+n\tau)\mu_1,P^*(s,s+n\tau)\mu_2)\leq \bar{\eta}^n\rho_\beta(\mu_1,\mu_2).
     \end{align*}
     The proof of \eqref{eq:P(s,t)mu leq Cmu} is essentially identical to Theorem 1.3 in \cite{HM11}, so we omit it here.
\end{proof}

\begin{definition}
    A measure-valued function $\mu_\cdot$ is called a periodic measure with period $\tau$ for the two-parameter Markovian semigroup $P(s,t)$ if it satisfies
    \begin{align*}
        \mu_{t+\tau}=\mu_t~~~\mbox{and}~~~P^{*}(s,t)\mu_s=\int_{\br^d}P(s,x,t,\cdot)\mu_s(\d x)=\mu_t.
    \end{align*}
    Moreover, if the corresponding Markov transition kernel $P(s,x,t,\cdot)$ is the transition probability associated with an SDE, we say that the SDE admits a periodic measure $\mu_\cdot$.
\end{definition}

Define
\[\cm_V(\br^d):=\left\{\mu:\br\to\cp(\br^d)\Bigl|\sup_{t\in\br}\int_{\br^d}V(x)\mu_t(\d x)<\infty\right\}.\]

\begin{theorem}\label{thm:exist unique periodic measure}
    Suppose that Assumptions \ref{cond:Markov process} and \ref{cond:Markov process2} hold. Then there exists a unique periodic measure $\mu_\cdot$ in $\cm_V(\br^d)$. Moreover, there is a constant $C_{\beta,\theta,\vartheta,K}=((1+\beta K)\vee\vartheta)((2+\beta K(1-\theta)^{-1})\vee 2\beta)$ such that
    \begin{description}
        \item[(\romannumeral1)] For any $s\in\br$ and $x\in \br^d$, we have
        \begin{equation}\label{eq:P(s,s+nT) leq C etan}
            \rho_\beta(P(s,x,s+n\tau,\cdot),\mu_s)\leq C_{\beta,\theta,\vartheta,K}(1+V(x))\bar{\eta}^n,~~n\in\bn.
        \end{equation}
        \item[(\romannumeral2)] For any $t\geq s$ and $x\in \br^d$, we have
        \begin{equation}\label{eq: rho_beta(P(s,x,t,cdot),mu_t)}
            \rho_\beta(P(s,x,t,\cdot),\mu_t)\leq  C_{\beta,\theta,\vartheta,K}(1+V(x))\bar{\eta}^n,~~\bn\ni n\leq \lfloor(t-s)/\tau\rfloor.
        \end{equation}
        \item[(\romannumeral3)] For any $t\geq s$ and $x\in \br^d$, we have
        \begin{equation}\label{eq:rho_beta(P(s-ntau,x,t,cdot),mu_t)}
            \rho_\beta(P(s-n\tau,x,t,\cdot),\mu_t)\leq C_{\beta,\theta,\vartheta,K}(1+V(x))\bar{\eta}^n,~~n\in\bn.
        \end{equation}
    \end{description}
\end{theorem}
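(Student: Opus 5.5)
We will build the periodic measure by a pullback (Cauchy-sequence) argument, using the one-period contraction of Lemma \ref{lem:||P(s,s+nT)||leq eta^n} and the completeness of $\cp_V(\br^d)$ under $\rho_\beta$. The space $\cp_V$ is complete because $\rho_\beta$ is the total variation norm weighted by $1+\beta V$, so it sits inside a weighted $L^1$-type Banach space of signed measures.

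\textbf{Step 1: construction.} Fix $s\in[0,\tau)$ and $x_0$ with $V(x_0)<\infty$, and set $\nu_n:=P(s-n\tau,x_0,s,\cdot)=P^*(s,s+n\tau)\delta_{x_0}$, where the equality uses periodicity. Then
\[\nu_{n+1}=P^*(s,s+n\tau)\,P^*(s,s+\tau)\delta_{x_0},\]
and Lemma \ref{lem:||P(s,s+nT)||leq eta^n} gives
\[\rho_\beta(\nu_{n+1},\nu_n)\le\bar\eta^n\rho_\beta\bigl(P(s,x_0,s+\tau,\cdot),\delta_{x_0}\bigr)\le\bar\eta^n\bigl(2+\beta(\theta+1)V(x_0)+\beta K\bigr).\]
Hence $(\nu_n)$ is Cauchy, and we define $\mu_s$ as its limit for $s\in[0,\tau)$, extended $\tau$-periodically.

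Iterating \eqref{eq:Lyap function} gives $\int V\,\d\nu_n\le\theta^nV(x_0)+K/(1-\theta)$. By lower semicontinuity in the limit (Fatou applied to truncations $V\wedge M$, which is legitimate since convergence in $\rho_\beta$ implies setwise convergence), we get $\int V\,\d\mu_s\le K/(1-\theta)$. For general $t\in[s,s+\tau)$, Assumption \ref{cond:Markov process2} bounds $\int V\,\d\mu_t$ uniformly, so $\mu_\cdot\in\cm_V$.

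\textbf{Step 2: invariance.} For $s\le t$, we have $P^*(s,t)\nu_n^{(s)}=P(s-n\tau,x_0,t,\cdot)$. Using \eqref{eq:P(s,t)mu leq Cmu} together with the contraction over whole periods, $P^*(s,t)$ is $\rho_\beta$-Lipschitz, so we may pass to the limit. Writing $t=s'+m\tau$ with $s'\in[0,\tau)$, the right-hand side is a subsequence-type pullback limit at $t$, which by the Cauchy estimate converges to $\mu_t$. The limit is independent of the reference point $x_0$, and indeed of any initial measure in $\cp_V$, because the contraction forces two pullback sequences to merge. This gives $P^*(s,t)\mu_s=\mu_t$.

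\textbf{Step 3: uniqueness and rates.} If $\tilde\mu$ is another periodic measure in $\cm_V$, then
\[\rho_\beta(\mu_s,\tilde\mu_s)=\rho_\beta\bigl(P^*(s,s+n\tau)\mu_s,P^*(s,s+n\tau)\tilde\mu_s\bigr)\le\bar\eta^n\rho_\beta(\mu_s,\tilde\mu_s),\]
and the right-hand side is finite by the uniform $V$-bound. Letting $n\to\infty$ gives $\mu_s=\tilde\mu_s$.

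For (i), write $\rho_\beta(P(s,x,s+n\tau,\cdot),\mu_s)=\rho_\beta(P^*(s,s+n\tau)\delta_x,P^*(s,s+n\tau)\mu_s)\le\bar\eta^n\rho_\beta(\delta_x,\mu_s)$. Then
\[\rho_\beta(\delta_x,\mu_s)\le 2+\beta V(x)+\beta\int V\,\d\mu_s\le(2+\beta K(1-\theta)^{-1})+\beta V(x),\]
which is bounded by $((2+\beta K(1-\theta)^{-1})\vee2\beta)(1+V(x))$. This already gives (i) even without the prefactor $(1+\beta K)\vee\vartheta$.

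For (ii), write $t=s+m\tau+r$ with $r\in[0,\tau)$ and $m=\lfloor(t-s)/\tau\rfloor$. Decompose $P(s,x,t,\cdot)=P^*(s+m\tau,t)P^*(s,s+m\tau)\delta_x$ and $\mu_t=P^*(s+m\tau,t)\mu_{s}$. Applying \eqref{eq:P(s,t)mu leq Cmu} and then (i) yields the constant $C_{\beta,\theta,\vartheta,K}$ with rate $\bar\eta^m\le\bar\eta^n$ for every $n\le m$.

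For (iii), apply (ii) with initial time $s-n\tau$: then $\lfloor(t-s+n\tau)/\tau\rfloor\ge n$, so the bound with $\bar\eta^n$ holds.

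\textbf{Main obstacle.} The delicate point is Step 1/2: justifying completeness of $(\cp_V,\rho_\beta)$ and checking that limits of $V$-integrals behave well, since $V$ is only measurable rather than continuous. I would handle this by working in the Banach space of signed measures with finite $\int(1+\beta V)\,\d|\mu|$ (a weighted $L^1$ space, hence complete), noting that positivity and unit mass are preserved under limits in this norm.
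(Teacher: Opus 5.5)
Your proposal is correct and follows essentially the same route as the paper: the sequence $P(s,x,s+n\tau,\cdot)=P(s-n\tau,x,s,\cdot)$ is Cauchy by the one-period contraction of Lemma \ref{lem:||P(s,s+nT)||leq eta^n}, its $\rho_\beta$-limit does not depend on the starting point, invariance follows by passing to the limit, uniqueness follows by contraction, and the rates follow via \eqref{eq:P(s,t)mu leq Cmu}. The differences are cosmetic and all valid: you prove completeness through the weighted-norm isometry, obtain the slightly sharper bound $\mu_s(V)\le K/(1-\theta)$, and deduce (iii) directly from (ii) with initial time $s-n\tau$.
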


\begin{proof}
For any $t\geq s$, there exists an integer $m$ such that $s+m\tau\leq t<s+(m+1)\tau$. Then it follows from the properties of the periodic Markovian semigroup, \eqref{eq:Lyap function} and \eqref{eq:PV leq V+K} that
\begin{equation}\label{eq:P(s,t)V(x) leq CV(x)+C}
\begin{aligned}
    P(s,t)V(x)&=P(s,s+m\tau)P(s+m\tau,t)V(x)\\
    &\leq \vartheta P(s,s+m\tau)V(x)+K\\
    &\leq \vartheta \theta^{m}V(x)+\vartheta K\sum_{i=0}^{m-1}\theta^i+K\\
    &\leq \vartheta \theta^{m}V(x)+\frac{\vartheta K}{1-\theta}+K.
    \end{aligned}
\end{equation}
In particular, we have
\begin{equation}\label{eq:P(s,s+n tau) leq cV(x)}
    \begin{aligned}
        P(s,s+n\tau)V(x)\leq \theta^nV(x)+\frac{K}{1-\theta}.
    \end{aligned}
\end{equation}
By the definition of the weighted total variation distance, we derive that for all $n\in\bn$,
\begin{equation}\label{eq:rho_beta(delta_x,P*(s,s+ntau)delta)}
\begin{aligned}
    \rho_\beta(\delta_x,P^*(s,s+n\tau)\delta_x)&= \int_{\br^d}(1+\beta V(y))|\delta_x-P^*(s,s+n\tau)\delta_x|(\d y)\\
    &\leq 2+\beta V(x)+\beta P(s,s+n\tau)V(x),
\end{aligned}
\end{equation}
where $\delta_x$ denotes the Dirac measure concentrated at $x$. For any $k\geq n$, we have
\begin{equation}\label{eq: P*(s,s+ktau)}
\begin{aligned}
    P^*(s,s+k\tau)&=P^*(s+(k-n)\tau,s+k\tau)P^*(s,s+(k-n)\tau)\\
    &=P^*(s,s+n\tau)P^*(s,s+(k-n)\tau).
\end{aligned}
\end{equation}
It follows from Lemma \ref{lem:||P(s,s+nT)||leq eta^n}, \eqref{eq:P(s,s+n tau) leq cV(x)} and \eqref{eq:rho_beta(delta_x,P*(s,s+ntau)delta)} that
\begin{equation}\label{eq:rho_beta(P(s+ktau),P(s+ntau)}
\begin{aligned}
    \rho_\beta(P(s,x,s+k\tau,\cdot),P(s,x,s+n\tau,\cdot))&=\rho_\beta(P^*(s,s+k\tau)\delta_x,P^*(s,s+n\tau)\delta_x)\\
    &\leq\bar{\eta}^n\rho_\beta(P^*(s,s+(k-n)\tau)\delta_{x},\delta_{x})\\
    &\leq(2+\beta V(x)+\beta P(s,s+(k-n)\tau)V(x))\bar{\eta}^n\\
    &\leq(2+\beta K(1-\theta)^{-1}+2\beta V(x))\bar{\eta}^n,
\end{aligned}
\end{equation}
where the last inequality follows from $\theta\in(0,1)$. Thus $\{P(s,x,s+n\tau,\cdot)\}_{n\geq 0}$ is a Cauchy sequence with respect to $\rho_\beta$. For brevity, set $\mu_n^x:=P(s,x,s+n\tau,\cdot)$. Since $\Vert\mu_n^x-\mu_m^x\Vert_{\mathrm{TV}}\leq\rho_\beta(\mu_n^x,\mu_m^x)$, the sequence $\{\mu_n^x\}_{n\geq0}$ is also Cauchy in the total variation norm. By the completeness of the space of finite signed measures under the total variation norm, there exists a finite signed measure $\mu_s^x$ such that $\Vert\mu_n^x-\mu_s^x\Vert_{\mathrm{TV}}\to0$. For any $A\in\cb(\br^d)$, we have
\[\mu_s^x(A)=\lim_{n\to\infty}\mu_n^x(A)\geq0~~\text{and}~~\mu_s^x(\br^d)=\lim_{n\to\infty}\mu_n^x(\br^d)=1.\]
Thus $\mu_s^x$ is a probability measure. Moreover, the Lyapunov condition gives
\begin{align*}
    \sup_{n\geq0}\int_{\br^d}V(y)\,\mu_n^x(\d y)=
    \sup_{n\geq0}P(s,s+n\tau)V(x)\leq
    V(x)+K(1-\theta)^{-1}.
\end{align*}
For any $R>0$, let $V_R:=V\wedge R$. Since $V_R$ is bounded
and $\mu_n^x\to\mu_s^x$ in the total variation norm, we have
\[\int_{\br^d}V_R(y)\,\mu_s^x(\d y)=\lim_{n\to\infty}\int_{\br^d}V_R(y)\,\mu_n^x(\d y)\leq V(x)+K(1-\theta)^{-1}.\]
Letting $R\to\infty$ and applying the monotone convergence theorem, we obtain
\[\int_{\br^d}V(y)\,\mu_s^x(\d y)\leq V(x)+K(1-\theta)^{-1}<\infty.\]
Hence $\mu_s^x\in\cp_V(\br^d)$. Finally, the convergence also holds with respect to $\rho_\beta$. Indeed, for any $R>0$, set $U_R(y):=1+\beta(V(y)\wedge R)$. Fix $\ve>0$. Since $\{\mu_n\}_{n\geq0}$ is Cauchy in $\rho_\beta$, there exists $n_0\in\bn$ such that $\rho_\beta(\mu_n^x,\mu_m^x)\leq\ve$ for $n,m\geq n_0$. Then, for every $n\geq n_0$ and $R>0$, we have
\[\int_{\br^d}U_R(y)|\mu_n^x-\mu_s^x|(\d y)=\lim_{m\to\infty}\int_{\br^d}U_R(y)|\mu_n^x-\mu_m^x|(\d y)\leq \ve.\]
As $R\to\infty$, we have $U_R\uparrow 1+\beta V$. Thus, by the monotone convergence theorem,
\[\rho_\beta(\mu_n^x,\mu_s^x)=\lim_{R\to\infty}\int_{\br^d}U_R(y)|\mu_n^x-\mu_s^x|(\d y)\leq \ve,~~n\geq n_0.\]
Since $\ve>0$ is arbitrary, this proves that $\rho_\beta(\mu_n^x,\mu_s^x)\to0$ as $n\to\infty$. For any $y\in\br^d$, we deduce that
\begin{align*}
    \rho_\beta(P(s,x,s+n\tau,\cdot),P(s,y,s+n\tau,\cdot))\leq \bar{\eta}^n\rho_\beta(\delta_x,\delta_y)\leq (2+\beta V(x)+\beta V(y))\bar{\eta}^n,
\end{align*}
which implies that the limit $\mu_s^x$ is independent of the initial value $x$, and we denote it by $\mu_s$. Let $k\to+\infty$ in \eqref{eq:rho_beta(P(s+ktau),P(s+ntau)}, we have
\begin{equation}\label{eq:P(s,s+nT) leq C eta n in proof}
    \rho_\beta(P(s,x,s+n\tau,\cdot),\mu_{s})\leq (2+\beta K(1-\theta)^{-1}+2\beta V(x))\bar{\eta}^n.
\end{equation}
For any $t\geq s$, note that
\begin{align*}
    P^*(s,t+n\tau)&=P^*(t,t+n\tau)P^*(s,t),
\end{align*}
then by Lemma \ref{lem:||P(s,s+nT)||leq eta^n}, \eqref{eq:P(s,t)V(x) leq CV(x)+C} and $\theta\in(0,1)$, we obtain
\begin{align*}
\begin{split}
    \rho_\beta(P(s,x,t+n\tau,\cdot),P(t,x,t+n\tau,\cdot))&\leq \bar{\eta}^n\rho_\beta(P^*(s,t)\delta_x,\delta_x)\\
    &\leq(2+\beta P(s,t)V(x)+\beta V(x))\bar{\eta}^n\\
    &\leq(2+\beta K\vartheta(1-\theta)^{-1}+\beta K+\beta V(x)+\vartheta\beta V(x))\bar{\eta}^n,
\end{split}
\end{align*}
which implies that
\[\lim_{n\to+\infty}P(s,x,t+n\tau,\cdot)=\lim_{n\to+\infty}P(t,x,t+n\tau,\cdot)=\mu_t.\]
We claim that $\mu_\cdot$ is a periodic measure. Indeed, for any $t\geq s$, we have
\begin{align*}
    \mu_{s+\tau}=\lim_{n\to+\infty}P(s+\tau,x,s+\tau+n\tau,\cdot)=\lim_{n\to+\infty}P(s,x,s+n\tau,\cdot)=\mu_s,
\end{align*}
and
\[P^*(s,t)\mu_s=\lim_{n\to+\infty}P^*(s+n\tau,t+n\tau)P^*(s,s+n\tau)\delta_x=\lim_{n\to+\infty}P^*(s,t+n\tau)\delta_x=\mu_t.\]
Therefore, $\mu_\cdot$ is a periodic measure of the Markovian semigroup $P(\cdot,\cdot)$ in $\cm_V(\br^d)$. It remains to prove uniqueness of the periodic measure. Suppose that there exist two periodic measures $\mu_\cdot^1$ and $\mu_\cdot^2$ in $\cm_V(\br^d)$, we have
\begin{align*}
    \rho_\beta(\mu_t^1,\mu_t^2)&=\rho_\beta(\mu_{t+n\tau}^1,\mu_{t+n\tau}^2)\\
    &=\rho_\beta(P^*(t,t+n\tau)\mu_t^1,P^*(t,t+n\tau)\mu_t^2)\\
    &\leq \bar{\eta}^n\rho_\beta(\mu_{t}^1,\mu_{t}^2),
\end{align*}
which shows that $\mu_t^1=\mu_t^2$. 

It follows from \eqref{eq:P(s,s+nT) leq C eta n in proof} that \eqref{eq:P(s,s+nT) leq C etan} holds. For any $t\geq s+n\tau$, there exists an integer $k\geq n$ such that $s+k\tau\leq t<s+(k+1)\tau$ and
\begin{align*}
    P^*(s,t)=P^*(s+k\tau,t)P^*(s,s+k\tau).
\end{align*}
Then it follows from \eqref{eq:P(s,t)mu leq Cmu}, \eqref{eq: P*(s,s+ktau)} and \eqref{eq:P(s,s+nT) leq C etan} that
    \begin{align*}
        \rho_\beta(P(s,x,t,\cdot),\mu_t)&=\rho_\beta(P^*(s+k\tau,t)P^*(s,s+k\tau)\delta_x,P^*(s+k\tau,t)\mu_{s+k\tau})\\
        &\leq((1+\beta K)\vee\vartheta)\rho_\beta(P^*(s,s+k\tau)\delta_x,\mu_{s})\\
        &=((1+\beta K)\vee\vartheta)\rho_\beta(P^*(s,s+n\tau)P^*(s,s+(k-n)\tau)\delta_x,P^*(s,s+n\tau)\mu_s)\\
        &\leq ((1+\beta K)\vee\vartheta)\bar{\eta}^n\rho_\beta(P(s,x,s+(k-n)\tau,\cdot),\mu_{s})\\
        &\leq((1+\beta K)\vee\vartheta)(2+\beta K(1-\theta)^{-1}+2\beta V(x))\bar{\eta}^n,
    \end{align*}
    where the last inequality follows from $k\geq n$ and $\bar{\eta}\in(0,1)$. Furthermore, for any $t\geq s$, there exists an integer $k\in\bn$ such that $s+k\tau\leq t<s+(k+1)\tau$. Then by an argument similar to that used in the proof of \eqref{eq: rho_beta(P(s,x,t,cdot),mu_t)}, we can establish \eqref{eq:rho_beta(P(s-ntau,x,t,cdot),mu_t)}.
\end{proof}

\section{Periodic measures for time-inhomogeneous SDEs}\label{sec:pm for sdes}
Building upon the theory developed in Section \ref{sec:pm for prcess}, we apply those results to $\tau$-periodic SDEs driven by multiplicative $\alpha$-stable noises. We first introduce some notation and terminology.

Denote by $\{L_t,t\geq0\}$ an $\mathbb{R}^d$-valued rotationally invariant $\alpha$-stable process with $\alpha\in(1,2)$, whose L\'evy measure is given by $\nu(\d z)=\frac{C_{\alpha,d}}{|z|^{d+\alpha}}\d z$,
where $C_{\alpha,d}=\alpha 2^{\alpha-1}\pi^{-\frac{d}{2}}\Gamma\left(\frac{d+\alpha}{2}\right)/\Gamma\left(1-\frac{\alpha}{2}\right)$.
For any $1\leq p<\alpha$, it follows that
\[\int_{|z|>1}|z|^p\nu(\d z)=\frac{C_{\alpha,d}S_d}{\alpha-p},~~\int_{|z|\leq 1}|z|^2\nu(\d z)=\frac{C_{\alpha,d}S_d}{2-\alpha},\]
where $S_d$ denotes the surface area of the unit sphere $\mathbb{S}^{d-1}\subset\mathbb{R}^d$, given by $S_d=2\pi^{\frac{d}{2}}/\Gamma\left(\frac{d}{2}\right)$. Let $N(\d z,\d t)$ be the associated Poisson random measure, which satisfies
\[N((s,t]\times\mathcal{E})=\sum_{r\in(s,t]}\bone_{\mathcal{E}}(L_r-L_{r-}),\]
for any $\mathcal{E}\in\cb(\br^d\setminus\{0\})$. By L\'evy-It\^o decomposition, SDE \eqref{SDEs} can be rewritten as
\begin{equation}\label{DSDE}
 \d X_t^{s,x}=b(t, X_t^{s,x})\d t+\int_{|z|\leq 1}\sigma(t,X_{t-}^{s,x})z\Tilde{N}(\d t,\d z)+\int_{|z|>1}\sigma(t,X_{t-}^{s,x})zN(\d t,\d z).
\end{equation}
where $\tilde{N}(\d z,\d t):=N(\d z,\d t)-\nu(\d z)\d t$ denotes the compensated Poisson random measure.

Let $\langle\cdot,\cdot\rangle$ denote the Euclidean inner product, $|\cdot|$ the Euclidean norm, and $\Vert\cdot\Vert$ the standard Frobenius norm. $C^{1,2}(\br\times\br^d)$ denotes the space of functions that are continuously differentiable in first variable and twice continuously differentiable in second variable. 

In this paper, we impose the following assumptions on the coefficients $b$ and $\sigma$.
\begin{condition}\label{A:b}
  For each $R\geq1$, there exists a constant $\ell(R)\geq 1$ such that for all $t\in\br$ and $x$, $y\in\br^d$ with $|x|\vee|y|\leq R$,
    \begin{equation}\label{cond:loc Lip}
        |b(t,x)-b(t,y)|\leq \ell(R)|x-y|,
    \end{equation}
    and there exist constants $l\geq 1$ and $C_b>0$ such that for all $t\in\br$ and $x\in\br^d$,
	\begin{equation}\label{cond:polynomial growth}
		|b(t,x)|\leq C_b(1+|x|^{l}).
	\end{equation}
    Moreover, there exist constants $\lambda> 0$ and $L>0$ such that for all $t\in\br$ and $x\in\br^d$,
    \begin{equation}\label{cond:weak dissipativity}
        \langle x, b(t,x)\rangle\leq L-\lambda|x|^2.
    \end{equation} 
\end{condition}

\begin{condition}\label{A:sigma}
  There exists a constant $C_\sigma>0$ such that for all $t\in\br$ and $x,y\in\br^d$,
   \begin{equation}\label{cond:Lip}
       \Vert\sigma(t,x)-\sigma(t,y)\Vert\leq C_\sigma|x-y|,
   \end{equation}
   and there exists a constant $\Lambda\geq 1$ such that for all $t\in\br$ and $x,\xi\in\br^d$,
   \begin{equation}\label{cond:elliptic and nondeg}
       \Lambda^{-1}|\xi|^2\leq \langle\sigma\sigma^\top(t,x)\xi,\xi\rangle\leq \Lambda|\xi|^2.
   \end{equation}
  \end{condition}

\begin{condition}\label{A:periodic}
  The drift vector $b(t,x)$ and diffusion coefficient matrix $\sigma(t,x)$ are $\tau$-periodic with respect to $t$, i.e., there exists a positive constant $\tau$ such that for all $t\in\br$ and $x\in\br^d$,
   \begin{equation}\label{cond:periodic}
       b(t+\tau,x)=b(t,x),~~~\sigma(t+\tau,x)=\sigma(t,x).
   \end{equation}
\end{condition}

\begin{remark}
Note that \eqref{cond:elliptic and nondeg} shows that all the eigenvalues of $\sigma\sigma^{\top}(t,x)$ will be in $[\Lambda^{-1},\Lambda]$, which implies that
\begin{equation}\label{con:bounded}
    \Vert\sigma(t,x)\Vert:=\sqrt{Tr[\sigma\sigma^{\top}(t,x)]}\in [\sqrt{d \Lambda^{-1}},\sqrt{d \Lambda}].
\end{equation}
In particular, $\sigma(t,x)$ is uniformly bounded and uniformly non-degenerate.
\end{remark}

Throughout the paper, we use the notation
\[\Theta:=(p,\alpha,d,C_b,\lambda, L ,C_\sigma,\Lambda,\ell,l)\]
to denote a collection of parameters. The symbol $C_\cdot$ denotes a positive constant whose dependence is specified by the subscript and whose value may vary from line to line.

\begin{theorem}\label{thm:E[|X|]}
    Suppose that Assumptions \ref{A:b} and \ref{A:sigma} hold. Then for each initial value, there exists a unique regular solution $\{X_t^{s,x}\}_{t\geq s}$ of SDE \eqref{SDEs}. Moreover, for every $1\leq p<\alpha$, there exist constants $\varpi>0$ and $C_\Theta>0$ such that
    \begin{equation}\label{xp estimate}
        \be\left[|X_{t}^{s,x}|^p\right]\leq \e^{-\varpi(t-s)}(1+|x|^2)^{\frac{p}{2}}+C_\Theta(1-\e^{-\varpi(t-s)}).
    \end{equation}
    and
    \begin{equation}\label{eq:supxp estimate}
        \be\left[\sup_{s\leq r\leq t}|X_r^{s,x}|^p\right]\leq C_\Theta(1+|x|^2)^{\frac{p}{2}}+C_\Theta((t-s)^p+(t-s)^\frac{1}{2}).
    \end{equation}
\end{theorem}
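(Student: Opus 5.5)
Existence and uniqueness will come from a localization argument, and the two moment bounds from It\^o's formula applied to a smooth Lyapunov function. For existence, I would truncate the drift: for $R\geq1$ set $b_R(t,x)=b(t,\pi_R(x))$, where $\pi_R$ is the radial projection onto the ball of radius $R$. Then $b_R$ is globally Lipschitz with constant $\ell(R)$ by \eqref{cond:loc Lip}, and $\sigma$ is globally Lipschitz by \eqref{cond:Lip}. Standard results for L\'evy-driven SDEs with Lipschitz coefficients, applied to the L\'evy--It\^o form \eqref{DSDE}, give a unique c\`adl\`ag solution $X^{R}$. Let $\tau_R=\inf\{t\geq s:|X^R_t|>R\}$. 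Pathwise uniqueness shows that $X^{R}$ and $X^{R'}$ coincide up to $\tau_R\wedge\tau_{R'}$, so $X_t:=X^R_t$ on $[s,\tau_R)$ is consistent. This defines the solution up to $\tau_\infty=\lim_R\tau_R$; uniqueness follows from the same localization. It then remains to show $\tau_\infty=\infty$ a.s., i.e.\ non-explosion.

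For the moment bound \eqref{xp estimate}, I would apply It\^o's formula to $V_p(x)=(1+|x|^2)^{p/2}$ with $1\leq p<\alpha$; the quadratic function is unusable because $\alpha<2$. The generator at time $t$ is
\[
\mathcal{L}_tV_p(x)=\langle\nabla V_p(x),b(t,x)\rangle+\int_{\br^d}\bigl[V_p(x+\sigma z)-V_p(x)-\langle\nabla V_p(x),\sigma z\rangle\bone_{|z|\leq1}\bigr]\nu(\d z).
\]
\begin{itemize}[label={}]
\item \emph{Drift term.} This equals $p(1+|x|^2)^{p/2-1}\langle x,b\rangle$. By \eqref{cond:weak dissipativity} it is at most $p(1+|x|^2)^{p/2-1}(L-\lambda|x|^2)\leq -p\lambda V_p(x)+C$.
\item \emph{Small jumps} ($|z|\leq1$). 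Since $\|\nabla^2V_p\|\leq C$ for $p\leq2$ and $\sigma$ is bounded, Taylor's formula bounds this part by $C\int_{|z|\leq1}|z|^2\nu(\d z)$.
\item \emph{Large jumps} ($|z|>1$). Since $V_p$ is $p$-H\"older-type subadditive, $V_p(x+y)-V_p(x)\leq C(1+|y|^p)$ for $p\leq1$. For $1<p<\alpha$ I would instead use that $\nabla V_p$ is bounded by $C(1+|x|)^{p-1}$, giving $V_p(x+y)-V_p(x)\leq C|y|(1+|x|)^{p-1}+C|y|^p$. After integrating against $\nu$ on $|z|>1$ (finite since $1\leq p<\alpha$), Young's inequality absorbs the $(1+|x|)^{p-1}$ term into $\frac{p\lambda}{2}V_p+C$.
\end{itemize}
Altogether $\mathcal{L}_tV_p\leq-\varpi V_p+C_\Theta\varpi$ with $\varpi=p\lambda/2$. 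Applying It\^o's formula to $\e^{\varpi t}V_p(X_{t\wedge\tau_R})$ and taking expectations, the local martingale terms are true martingales before $\tau_R$, since the integrands are bounded on the ball. This gives a bound uniform in $R$. Fatou's lemma then yields $\bp(\tau_R\leq t)\,(1+R^2)^{p/2}\leq C$, so $\tau_\infty=\infty$ (the ``regular'' solution), and letting $R\to\infty$ gives \eqref{xp estimate} because $|x|^p\leq V_p$.

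For \eqref{eq:supxp estimate}, I would write $V_p(X_r)=V_p(x)+\int_s^r\mathcal{L}V_p\,\d u+M^{(1)}_r+M^{(2)}_r$, where $M^{(1)}$ is the compensated small-jump martingale and $M^{(2)}$ is the large-jump integral, taken uncompensated or with its compensator moved into the drift.
\begin{itemize}[label={}]
\item \emph{Drift integral.} It is bounded above by $C(t-s)$, since $\mathcal{L}V_p\leq C$.
\item \emph{Small-jump martingale.} By the Burkholder--Davis--Gundy inequality, $\be\sup|M^{(1)}|\leq C\,\be\bigl(\int_s^t\int_{|z|\leq1}|\nabla V_p|^2|\sigma z|^2\nu\,\d u\bigr)^{1/2}$. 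For $p\leq 2$ one has $|\nabla V_p|\leq p$, so this is $\leq C(t-s)^{1/2}$, which produces the $(t-s)^{1/2}$ term.
\item \emph{Large-jump integral.} The increment satisfies $|V_p(X_{u-}+\sigma z)-V_p(X_{u-})|\leq C|z|^p$; check this via the mean value bound $|\nabla V_p|\le p(1+|x|^2)^{(p-1)/2}$ combined with $p$-subadditivity-type estimates. Its total variation then has expectation $\leq C(t-s)$.
\end{itemize}
Since $|X_r|^p\leq V_p(X_r)$, these three bounds give $C_\Theta(1+|x|^2)^{p/2}+C(t-s)+C(t-s)^{1/2}$. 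This is dominated by the stated right-hand side because $t-s\leq (t-s)^p+(t-s)^{1/2}$.

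The main obstacle is controlling the large-jump terms with only $p<\alpha$ moments. In particular, the increment $V_p(x+y)-V_p(x)$ must be shown to be at most $C(1+|y|^p)$ uniformly in $x$. This holds because $V_p$ is uniformly $p$-H\"older-like for $p\le 1$; for $1<p<2$ one needs either the bound $|y|(1+|x|)^{p-1}$ together with Young's inequality (in the drift estimate) or a careful splitting according to $|y|\lessgtr 1+|x|$ (in the sup estimate).
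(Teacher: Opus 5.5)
Your construction of the solution, the non-explosion argument and your proof of \eqref{xp estimate} are correct and follow the paper's route. That route uses $V_p$ as Lyapunov function, Young's inequality to absorb the $(1+|x|)^{p-1}$ large-jump term, an exponential weight, localization, and a Chebyshev-type bound on $\mathbb{P}(\tau_R\le t)$.

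The gap is in \eqref{eq:supxp estimate} for $1<p<\alpha$, which is the main case. Two inequalities you rely on there are false when $p>1$.
\begin{itemize}
\item $|\nabla V_p(x)|=p|x|(1+|x|^2)^{p/2-1}$ behaves like $p|x|^{p-1}$ at infinity, so $|\nabla V_p|\le p$ holds only for $p\le 1$, not for $p\le 2$. Your $C(t-s)^{1/2}$ bound for the small-jump martingale therefore does not follow.
\item $|V_p(x+y)-V_p(x)|\le C(1+|y|^p)$ cannot hold uniformly in $x$. With $|y|=1$ and $|x|\to\infty$ the increment is of order $|x|^{p-1}$. The proposed splitting into $|y|<1+|x|$ versus $|y|\ge 1+|x|$ does not help: on the first region the correct bound $(1+|x|)^{p-1}|y|$ is larger than $|y|^p$, not smaller.
\end{itemize}
So both jump contributions necessarily carry a factor $(1+|X_{u-}|)^{p-1}$, and your outline has no mechanism for controlling it. As written, the sup estimate is proved only for $p=1$.

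The paper handles this by bounding the factor by $\sup_{s\le u\le t}|X_u|^{p-1}$. Young's inequality then turns each jump term into $\frac14\mathbb{E}[\sup_{s\le u\le t}|X_u|^p]$ plus powers of $t-s$, and these are absorbed into the left-hand side. This is the origin of the $(t-s)^p$ term in \eqref{eq:supxp estimate}. The absorption tacitly requires the left side to be finite, e.g.\ via a localization.

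An alternative that fits your outline is to feed in the bound $\mathbb{E}V_p(X_u)\le \mathrm{e}^{-\varpi(u-s)}V_p(x)+C$ that you already obtained.
\begin{itemize}
\item For the small-jump term, Jensen's inequality and $2(p-1)<p$ give that $\mathbb{E}\bigl(\int_s^t(1+|X_u|)^{2(p-1)}\,\mathrm{d}u\bigr)^{1/2}$ is at most $\bigl(\int_s^t C(\mathbb{E}V_p(X_u))^{2(p-1)/p}\,\mathrm{d}u\bigr)^{1/2}$.
\item For the large-jump term, $\int_s^t\mathbb{E}(1+|X_u|)^{p-1}\,\mathrm{d}u$ is at most $\int_s^t C(\mathbb{E}V_p(X_u))^{(p-1)/p}\,\mathrm{d}u$.
\item Using subadditivity of $r\mapsto r^\kappa$ for $\kappa<1$ and the exponential decay, these are bounded by $CV_p(x)+C(t-s)^{1/2}$ and $CV_p(x)+C(t-s)$ respectively.
\end{itemize}
This recovers the bound $C_\Theta(1+|x|^2)^{p/2}+C(t-s)+C(t-s)^{1/2}$ you claimed, without any absorption. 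Either fix must be added for the proof to be complete.
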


\begin{proof}
    The infinitesimal generator operator of \eqref{SDEs} denoted as $\mathscr{L}$. For any $V\in C^{1,2}(\br\times\br^d)$,
    \[\mathscr{L} V(t,x)=\partial_t V(t,x)+\langle b(t,x),\nabla_xV(t,x)\rangle+\mathscr{L}_\sigma V(t,x),\]
    where
    \begin{equation}\label{eq: LV(t,x)}
        \mathscr{L}_\sigma V(t,x):=\int_{\br^d\backslash\{0\}}(V(t,x+\sigma(t,x)z)-V(t,x)-\bone_{\{|z|\leq 1\}}\langle\nabla_xV(t,x),\sigma(t,x)z\rangle)\nu(\d z).
    \end{equation}
    For any $1\leq p<\alpha$, we define the function $V(x)=(1+|x|^2)^{\frac{p}{2}}$. Then we have
    \[\nabla V(x)=\frac{px}{(1+|x|^2)^{1-\frac{p}{2}}},~~\nabla^2V(x)=\frac{pI_d}{(1+|x|^2)^{1-\frac{p}{2}}}-\frac{p(2-p)x\otimes x}{(1+|x|^2)^{2-\frac{p}{2}}},\]
    where $I_d$ denotes the $d\times d$ identity matrix and $\otimes$ is tensor product. It is not difficult to see that
    \begin{equation}\label{eq:nabla V and nabla2 V}
        |\nabla V(x)|\leq p|x|^{p-1},~~\Vert\nabla^2V(x)\Vert\leq p(3-p)\sqrt{d}.
    \end{equation}
    Therefore, by \eqref{cond:weak dissipativity}, \eqref{eq:nabla V and nabla2 V} and \eqref{con:bounded}, we deduce that
    \begin{align*}
        \mathscr{L} V(x)&=p(1+|x|^2)^{{\frac{p}{2}-1}}\langle x,b(t,x)\rangle+\int_{|z|> 1} [V(x+\sigma(t,x)z)-V(x)]\nu(\d z)\\
        &~~~~+\int_{|z|\leq 1}[V(x+\sigma(t,x)z)-V(x)-\langle\nabla V(x),\sigma(t,x)z\rangle]\nu(\d z)\\
        &\leq p(1+|x|^2)^{{\frac{p}{2}-1}}(L-\lambda|x|^2)+\int_{|z|>1}\int_0^1\langle\nabla V(x+r\sigma(t,x)z),\sigma(t,x)z\rangle\d r\nu(\d z)\\
        &~~~+\int_{|z|\leq 1}\int_0^1\int_0^1\langle \nabla^2V(x+r_2r_1\sigma(t,x)z)r_1\sigma(t,x)z,\sigma(t,x)z\rangle \d r_2\d r_1\nu(\d z)\\
        &\leq -2^{-1}p\lambda(1+|x|^2)^{\frac{p}{2}}+C_\Theta.
    \end{align*}
    For any fixed $\varpi>0$, define $V(t,x)=\re^{\varpi(t-s)}V(x)$. It follows that
    \begin{align*}
        \mathscr{L} V(t,x)&=\varpi \re^{\varpi(t-s)}V(x)+\re^{\varpi (t-s)}\mathscr{L} V(x)\\
        &\leq \left(\varpi-2^{-1}p\lambda\right)\re^{\varpi(t-s)}(1+|x|^2)^{\frac{p}{2}}+C_\Theta\re^{\varpi(t-s)}.
    \end{align*}
    Since $\lambda> 0$, we can choose $\ve$ small enough such that $\lambda>2\ve$. Let $\varpi:=p(2^{-1}\lambda -\ve)>0$ depend only on $(p,\lambda)$, then we have
    \[\mathscr{L} V(t,x)\leq -\ve p\re^{\varpi(t-s)}(1+|x|^2)^{\frac{p}{2}}+C_\Theta\re^{\varpi(t-s)}.\]
    For any $N\in\bn$, define the stopping time
    \begin{equation}
        \varsigma_N:=\inf\{t\geq s:|X_t^{s,x}|\geq N\}
    \end{equation}
    with $\inf\emptyset=+\infty$. Then by Dynkin's formula, we have
    \begin{align*}
        \be\left[V\left(t\wedge\varsigma_N,X_{t\wedge\varsigma_N}^{s,x}\right)-V(s,x)\right]&=\be\left[\int_s^{t\wedge\varsigma_N}\mathscr{L} V(r,X_r^{s,x})\d r\right]\leq C_\Theta\int_s^t\e^{\varpi (r-s)}\d r,
    \end{align*}
    which implies that
    \begin{equation}\label{exp estimate}
        \be\left[\e^{\varpi(t\wedge\varsigma_N-s)}(1+|X_{t\wedge\varsigma_N}^{s,x}|^2)^{\frac{p}{2}}\right]\leq(1+|x|^2)^{\frac{p}{2}}+C_\Theta(\re^{\varpi(t-s)}-1).
    \end{equation}
    Note that
    \begin{align*}
        (1+N^2)^{\frac{p}{2}}\bp\{{\varsigma_N\leq t}\}&= \be\left[(1+N^2)^{\frac{p}{2}}\bone_{\{\varsigma_N\leq t\}}\right]\\
        &\leq \be\left[(1+|X_{t\wedge\varsigma_N}^{s,x}|^2)^{\frac{p}{2}}\bone_{\{\varsigma_N\leq t\}}\right]\\
        &\leq \be\left[(1+|X_{t\wedge\varsigma_N}^{s,x}|^2)^{\frac{p}{2}}\right]\\
        &\leq\be\left[\e^{\varpi(t\wedge\varsigma_N-s)}(1+|X_{t\wedge\varsigma_N}^{s,x}|^2)^{\frac{p}{2}}\right].
    \end{align*}
    Therefore it follows from \eqref{exp estimate} that
    \begin{equation}\label{eq:P(tau<t)}
        \bp\{\varsigma_N\leq t\}\leq\frac{(1+|x|^2)^{\frac{p}{2}}+C_\Theta(\e^{\varpi (t-s)}-1)}{(1+N^2)^{\frac{p}{2}}}.
    \end{equation}
    Letting $N\to+\infty$ in \eqref{eq:P(tau<t)}, we derive that
    \begin{equation}\label{stopping time to 0}
        \bp\{\varsigma_N\leq t\}\rightarrow  0.
    \end{equation}
    By the monotone convergence theorem, we have
    \[\be\left[\lim_{N\to+\infty}\m1_{\{\varsigma_N>t\}}\right]=\lim_{N\to+\infty}\be\left[\m1_{\{\varsigma_N>t\}}\right]=\lim_{N\to+\infty}\bp\{\varsigma_N>t\}=1.\]
    Thus $\lim_{N\to+\infty}\varsigma_N=+\infty$, $\bp-a.s.$. Consequently, SDE \eqref{SDEs} admits a unique regular solution. Moreover, it also follows from \eqref{exp estimate} that
    \begin{equation}\label{expstoping estimate}
        \be\left[\e^{\varpi(t-s)}(1+|X_{t}^{s,x}|^2)^{\frac{p}{2}}\bone_{\{\varsigma_N>t\}}\right]\leq (1+|x|^2)^{\frac{p}{2}}+C_\Theta(\e^{\varpi(t-s)}-1).
    \end{equation}
    Taking the limit as $N\to+\infty$ in \eqref{expstoping estimate} and by \eqref{stopping time to 0}, we have
    \begin{equation*}
        \be\left[(1+|X_{t}^{s,x}|^2)^{\frac{p}{2}}\right]\leq \e^{-\varpi(t-s)}(1+|x|^2)^{\frac{p}{2}}+C_\Theta(1-\e^{-\varpi(t-s)}),
    \end{equation*}
    which implies that
    \begin{equation*}
        \be\left[|X_{t}^{s,x}|^p\right]\leq \e^{-\varpi(t-s)}(1+|x|^2)^{\frac{p}{2}}+C_\Theta(1-\e^{-\varpi(t-s)}).
    \end{equation*} 
In the following, we prove \eqref{eq:supxp estimate} holds. For simplicity, we focus on $1<p<\alpha$, while the case $p=1$ can be handled similarly. Applying It\^o's formula to $V(x)=(1+|x|^2)^{\frac{p}{2}}$, we have
    \begin{align*}
        V(X_t^{s,x})&=V(x)+\int_s^tp(1+|X_r^{s,x}|^2)^{{\frac{p}{2}-1}}\langle X_r^{s,x},b(r,X_r^{s,x})\rangle\d r\\
        &\quad+\int_s^t\int_{|z|\leq 1}[V(X_{r-}^{s,x}+\sigma(r,X_{r-}^{s,x})z)-V(X_{r-}^{s,x})]\Tilde{N}(\d z,\d r)\\
        &\quad+\int_s^t\int_{|z|> 1}[V(X_{r-}^{s,x}+\sigma(r,X_{r-}^{s,x})z)-V(X_{r-}^{s,x})]N(\d z,\d r)\\
        &\quad+\int_s^t\int_{|z|\leq 1}[V(X_{r-}^{s,x}+\sigma(r,X_{r-}^{s,x})z)-V(X_{r-}^{s,x})\\
        &\qquad\qquad\qquad\qquad~-\langle \nabla V(X_{r-}^{s,x}),\sigma(r,X_{r-}^{s,x})z\rangle]\nu(\d z)\d r\\
        &=: (1+|x|^2)^{\frac{p}{2}}+I_1(t)+I_2(t)+I_3(t)+I_4(t).
    \end{align*}
    For the term $I_1(t)$, the weak dissipativity \eqref{cond:weak dissipativity} implies
    \begin{align*}
        \be\left[\sup_{s\leq r\leq t}I_1(r)\right]&\leq p\be\left[\sup_{s\leq r\leq t}\int_s^r\frac{L-\lambda|X_u^{s,x}|^2}{(1+|X_u^{s,x}|^2)^{1-\frac{p}{2}}}\d u\right]\leq C_\Theta(t-s).
    \end{align*}
     For the term $I_2(t)$, it follows from Kunita's inequality and \eqref{eq:nabla V and nabla2 V} that
    \begin{align*}
        \be\left[\sup_{s\leq r\leq t}I_2(r)\right]&\leq \be\left[\int_s^t\int_{|z|\leq 1}|V(X_{u-}^{s,x}+\sigma(u,X_{u-}^{s,x})z)-V(X_{u-}^{s,x})|^2N(\d z,\d u)\right]^\frac{1}{2}\\
        &\leq C_p\be\left[\int_s^t\int_{|z|\leq 1}(|X_{u-}^{s,x}|^{2p-2}|\sigma(u,X_{u-}^{s,x})z|^{2}+|\sigma(u,X_{u-}^{s,x})z|^{2p})N(\d z,\d u)\right]^\frac{1}{2}\\
        &\leq C_p\be\left[\sup_{s\leq u\leq t}|X_u^{s,x}|^{2p-2}\int_s^t\int_{|z|\leq 1}|\sigma(u,X_{u-}^{s,x})z|^{2}N(\d z,\d u)\right]^\frac{1}{2}\\
        &~~~+C_p\be\left[\int_s^t\int_{|z|\leq 1}|\sigma(u,X_{u-}^{s,x})z|^{2p}N(\d z,\d u)\right]^\frac{1}{2}\\
        &\leq\frac{1}{4}\be\left[\sup_{s\leq u\leq t}|X_u^{s,x}|^{p}\right]+C_\Theta(t-s)^{\frac{p}{2}}+C_\Theta(t-s)^\frac{1}{2}.
    \end{align*}
    For the term $I_3(t)$, we deduce from Young's inequality and \eqref{eq:nabla V and nabla2 V} that
    \begin{align*}
        \be\left[\sup_{s\leq r\leq t}I_3(r)\right]
        &\leq \be\left[\int_s^t\int_{|z|> 1}\int_0^1p|X_u^{s,x}+r\sigma(u,X_{u-}^{s,x})z|^{p-1}|\sigma(u,X_{u-}^{s,x})z|\nu(\d z)\d u\right]\\
        &\leq C_p\be\left[\int_s^t\int_{|z|> 1}(|X_u^{s,x}|^{p-1}|\sigma(u,X_{u-}^{s,x})z|+|\sigma(u,X_{u-}^{s,x})z|^p)\nu(\d z)\d u\right]\\
        &\leq C_p\be\left[\sup_{s\leq u\leq t}|X_u^{s,x}|^{p-1}\int_s^t\int_{|z|> 1}|\sigma(u,X_{u-}^{s,x})z|\nu(\d z)\d u\right]\\
        &~~~+C_p\be\left[\int_s^t\int_{|z|> 1}|\sigma(u,X_{u-}^{s,x})z|^p\nu(\d z)\d u\right]\\
        &\leq \frac{1}{4}\be\left[\sup_{s\leq u\leq t}|X_u^{s,x}|^{p}\right]+C_\Theta(t-s)^p+C_\Theta(t-s).
    \end{align*}
    For the term $I_4(t)$, we apply \eqref{eq:nabla V and nabla2 V} again to obtain
    \begin{equation*}\label{I4}
    \begin{aligned}
        \be\left[\sup_{s\leq r\leq t}I_4(r)\right]&\leq C_\Theta\be\left[\int_s^t\int_{|z|\leq1}|\sigma(u,X_{u-}^{s,x})z|^2\nu(\d z)\d u\right]\leq C_\Theta(t-s).
    \end{aligned}
    \end{equation*}
    Putting the above estimates together, there exists a positive constant $C_\Theta$ such that
    \begin{align*}
        \be\left[\sup_{s\leq r\leq t}|X_r^{s,x}|^p\right]&\leq C_\Theta(1+|x|^2)^{\frac{p}{2}}+C_\Theta((t-s)^p+(t-s)^\frac{1}{2}).
    \end{align*}
    The proof is finished.
\end{proof}

For any fixed $N\in\bn$, we construct $b_N(t,x)$ as follows:
\begin{equation}\label{bN}
    b_N(t,x)=\begin{cases}
        b(t,x),&|x|\leq N,\\
        b\left(t,\frac{N}{|x|}x\right), &|x|>N.
    \end{cases}
\end{equation}
Since $b(t,x)$ is locally Lipschitz continuous in $x$ and uniformly in $t$, it is not difficult to see that the truncated coefficient $b_N(t,\cdot)$ is globally Lipschitz continuous and bounded, with a Lipschitz constant $\ell(N)$. We consider the following truncated SDE:
\begin{equation}\label{SDEN}
\begin{cases}
 \d X_t^{s,x,N}=b_N(t, X_t^{s,x,N})\d t+\sigma(t,X_{t-}^{s,x,N})\d L_t,  \quad t\geq s,\\
 X_s^N=x.
\end{cases}
\end{equation}
Under Assumptions \ref{A:b} and \ref{A:sigma}, together with \eqref{bN}, it is well known from \cite[Theorem 6.2.3]{A09} that, for each initial condition $(s,x)$, SDE \eqref{SDEN} admits a unique solution $X_t^{s,x,N}$. To verify the hypotheses of \cite[Theorem 1.5]{CZ18} explicitly, we make the change of variables $w=\sigma(t,x)z$ in the L\'evy kernel:
\[\nu_{t,x}(\d w)=j(t,x,w)\d w,~~j(t,x,w)=C_{\alpha,d}|\det \sigma(t,x)^{-1}||\sigma(t,x)^{-1}w|^{-d-\alpha}=\kappa(t,x,w)|w|^{-d-\alpha},\]
where
\[\kappa(t,x,w):=C_{\alpha,d}|\det\sigma(t,x)^{-1}|\left(\frac{|w|}{|\sigma(t,x)^{-1}w|}\right)^{d+\alpha},~~w\neq 0.\]
The transformed kernel is even in $w$, so symmetry makes the drift correction from changing the compensation convention vanish. The uniform ellipticity in \eqref{cond:elliptic and nondeg} and the spatial Lipschitz continuity in \eqref{cond:Lip} imply, uniformly in $t\in\mathbb R$ and $w\neq0$, that
\[c_0^{-1}\leq\kappa(t,x,w)\leq c_0,~~|\kappa(t,x,w)-\kappa(t,y,w)|\leq c_0|x-y|
\]
for some $c_0\geq1$. Thus the transformed kernel satisfies the comparability and spatial regularity condition in \cite[(1.3)]{CZ18} with $\beta=1$ and $\beta'=0$. Since $\alpha\in(1,2)$, the cancellation condition required only at $\alpha=1$ is irrelevant, and the bounded drift $b_N$ belongs to the Kato class $\mathbb K_d^\alpha$. Therefore, by \cite[Theorem 1.5]{CZ18}, the solution defines a Markov process whose transition probability admits a jointly continuous density $p^N(s,x,t,y)$ with respect to the Lebesgue measure. In addition, $p^N(s,x,t,y)$ satisfies sharp two-sided estimates. We collect these results in the following lemma for later use.

\begin{lemma}\label{lem:two-side estimate and gradient estimate}
Suppose that Assumptions \ref{A:b} and \ref{A:sigma} hold. Then $X_t^{s,x,N}$ has a jointly continuous density $p^N(s,x,t,y)$ with respect to $(s,x,t,y)$. For every $T>0$, there exists a constant $C_{N,T}\geq 1$ depending on $N$ and $T$ such that for all $0<t-s<T$ and $x,y\in\br^d$,
    \begin{equation}\label{eq:two-sides estimate}
        \frac{C_{N,T}^{-1}(t-s)}{((t-s)^{1/\alpha}+|x-y|)^{d+\alpha}}\leq p^N(s,x,t,y)\leq \frac{C_{N,T}(t-s)}{((t-s)^{1/\alpha}+|x-y|)^{d+\alpha}}.
    \end{equation}
\end{lemma}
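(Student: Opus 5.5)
The plan is to obtain the lemma directly from \cite[Theorem 1.5]{CZ18}, applied to the truncated equation \eqref{SDEN}. The paragraph preceding the statement already rewrites the jump part so that it matches the framework there. The real work is to check, one by one, that the hypotheses of that theorem hold with constants depending only on $N$ and the parameters in $\Theta$. After that, we translate its conclusions (existence, joint continuity and two-sided bounds of the density) into \eqref{eq:two-sides estimate}.

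\emph{First step: the jump kernel.} We rewrite the jump part of \eqref{SDEN} via $w=\sigma(t,x)z$, which gives the kernel $j(t,x,w)=\kappa(t,x,w)|w|^{-d-\alpha}$. By \eqref{cond:elliptic and nondeg}, the ratio $|w|/|\sigma(t,x)^{-1}w|$ lies in $[\Lambda^{-1/2},\Lambda^{1/2}]$, and $|\det\sigma^{-1}|$ lies in $[\Lambda^{-d/2},\Lambda^{d/2}]$. This gives $c_0^{-1}\le\kappa\le c_0$ uniformly in $(t,x,w)$.

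For the spatial regularity we use \eqref{cond:Lip} together with the bounds on $\sigma^{-1}$. We write $\sigma(t,x)^{-1}-\sigma(t,y)^{-1}=\sigma(t,x)^{-1}(\sigma(t,y)-\sigma(t,x))\sigma(t,y)^{-1}$, so $\sigma^{-1}$ is Lipschitz in $x$. The maps $A\mapsto |\det A|$ and $A\mapsto |Aw/|w||^{-d-\alpha}$ are smooth on the compact set of matrices with singular values in $[\Lambda^{-1/2},\Lambda^{1/2}]$. Hence $|\kappa(t,x,w)-\kappa(t,y,w)|\le c_0|x-y|$.

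Evenness of $\kappa$ in $w$ means the jump part compensated on $\{|z|\le1\}$ equals the one compensated on $\{|w|\le1\}$ up to zero drift. Symmetry also makes the cancellation condition unnecessary, and in any case it is only needed when $\alpha=1$.

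\emph{Second step: the drift.} The truncated drift satisfies $|b_N(t,x)|\le\sup_{|y|\le N,\,t}|b(t,y)|\le C_b(1+N^l)$ by \eqref{cond:polynomial growth}. A bounded function lies in the Kato class $\mathbb K_d^\alpha$ when $\alpha>1$, since $\int_0^\varepsilon\!\int (r\,t^{-1/\alpha})\,p_\alpha(t,x-y)\,\d y\,\d t\lesssim\varepsilon^{1-1/\alpha}\to0$. Well-posedness of \eqref{SDEN} is \cite[Theorem 6.2.3]{A09}. We then identify the law of its solution with the Markov process constructed in \cite{CZ18}, via uniqueness of the martingale problem (which also follows from pathwise uniqueness).

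\emph{Third step: the conclusion.} \cite[Theorem 1.5]{CZ18} then yields a jointly continuous density together with the two-sided bound \eqref{eq:two-sides estimate} on $0<t-s<T$, with constant depending on $T$, $c_0$, $\alpha$, $d$ and the Kato norm of $b_N$, hence on $N$ and $T$.

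The main obstacle is the time variable. \cite{CZ18} is phrased for time-independent (or time-homogeneous) coefficients, whereas here $\kappa$ and $b_N$ depend on $t$ only measurably/continuously and uniformly. I would first check that the Levi parametrix construction in \cite{CZ18} goes through verbatim with time-dependent coefficients. The frozen kernel is taken at $(s,y)$, and all bounds are uniform in $t$, since only the spatial regularity is used. Joint continuity in $(s,t)$ then follows from the continuity of $\sigma,b$ in $t$ inside the parametrix series. If this proves delicate, an alternative is to add time as a coordinate, but that destroys nondegeneracy, so I would rely on the time-dependent parametrix.
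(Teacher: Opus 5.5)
Your proposal follows the paper's own argument. The paper also substitutes $w=\sigma(t,x)z$, derives $c_0^{-1}\le\kappa\le c_0$ and $|\kappa(t,x,w)-\kappa(t,y,w)|\le c_0|x-y|$ from \eqref{cond:elliptic and nondeg} and \eqref{cond:Lip}, uses evenness of the kernel to remove the compensation and cancellation issues, notes that the bounded $b_N$ lies in the Kato class, and then invokes \cite[Theorem 1.5]{CZ18}. The time dependence you flag as the main obstacle is not one: the paper applies \cite{CZ18} directly to the time-dependent kernel $\kappa(t,x,w)$ with bounds uniform in $t$, so you do not need to re-derive the parametrix.
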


\begin{lemma}\label{lem:stochastic continuous}
    Suppose that Assumptions \ref{A:b} and \ref{A:sigma} hold. Let $X_t^{s,x}$ and $X_t^{s,x,N}$ be the unique solution of SDEs \eqref{SDEs} and \eqref{SDEN}, respectively, with initial condition $(s,x)$. Then we have
        \begin{equation}\label{eq:XN time dependence}
            \lim_{t\to r}\be[|X_t^{s,x,N}-X_r^{s,x,N}|]=0~~\mbox{and}~~\lim_{N\to \infty}\be[|X_t^{s,x}-X_t^{s,x,N}|]=0.
        \end{equation}
        Moreover, we have
        \begin{equation}\label{eq:prob}
            \lim_{t\to r}\be[|X_t^{s,x}-X_r^{s,x}|]=0~~\mbox{and}~~\lim_{t\to r}\bp(|X_t^{s,x}-X_r^{s,x}|>\ve)=0.
        \end{equation}
        In particular, $P^N(s,t)$ and $P(s,t)$ are stochastically continuous.
\end{lemma}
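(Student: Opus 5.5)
We prove the three limits in \eqref{eq:XN time dependence} and \eqref{eq:prob} in turn; each reduces to the moment bounds of Theorem \ref{thm:E[|X|]} together with a localization by the stopping times $\varsigma_N=\inf\{t\geq s:|X_t^{s,x}|\geq N\}$ introduced there.

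\emph{Continuity in time for $X^{N}$.} Take $r<t$ (the case $t<r$ is symmetric). Write $X_t^{s,x,N}-X_r^{s,x,N}$ through the L\'evy--It\^o form \eqref{DSDE} with $b$ replaced by $b_N$. Three bounds then suffice.
\begin{itemize}
\item The drift contributes at most $\sup|b_N|\,(t-r)$, since $b_N$ is bounded by $C_b(1+N^l)$.
\item For the small-jump compensated integral, use the Cauchy--Schwarz inequality and the It\^o isometry. Together with the boundedness \eqref{con:bounded} this gives a bound $C_\Theta(t-r)^{1/2}$.
\item For the large-jump integral, the first absolute moment is bounded by $\be\int_r^t\int_{|z|>1}|\sigma z|\,\nu(\d z)\d u\leq C_\Theta (t-r)$; this is finite because $\alpha>1$.
\end{itemize}
Hence $\be|X_t^{s,x,N}-X_r^{s,x,N}|\leq C_{N,\Theta}\big((t-r)+(t-r)^{1/2}\big)\to0$. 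The same computation applied to $X^{s,x}$ on $[r,t]$, stopped at $\varsigma_M$, gives the analogous bound with $N$ replaced by $M$.

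\emph{Convergence $X^{N}\to X$.} By pathwise uniqueness, both equations have the same coefficients while the paths stay in the ball of radius $N$. Therefore $X_u^{s,x}=X_u^{s,x,N}$ for $u<\varsigma_N$, and consequently for $u\le t$ on the event $\{\varsigma_N>t\}$. It follows that
\[\be|X_t^{s,x}-X_t^{s,x,N}|=\be\big[|X_t^{s,x}-X_t^{s,x,N}|\bone_{\{\varsigma_N\le t\}}\big]\le \big(\|X_t^{s,x}\|_{L^p}+\|X_t^{s,x,N}\|_{L^p}\big)\bp(\varsigma_N\le t)^{1-1/p},\]
by H\"older's inequality with some $1<p<\alpha$.
\begin{itemize}
\item The quantity $\bp(\varsigma_N\le t)$ tends to $0$ by \eqref{eq:P(tau<t)}.
\item $\be|X_t^{s,x}|^p$ is bounded by \eqref{xp estimate}.
\item For $\be|X_t^{s,x,N}|^p$ we need a bound uniform in $N$. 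This holds because $b_N$ inherits a weak dissipativity $\langle x,b_N(t,x)\rangle\le L'$: for $|x|>N$, $\langle x,b_N\rangle=\frac{|x|}{N}\langle y,b(t,y)\rangle$ with $|y|=N$, which is $\le \frac{|x|}{N}(L-\lambda N^2)\le 0$ when $N^2\ge L/\lambda$. The Lyapunov computation in Theorem \ref{thm:E[|X|]} then yields a bound of the form $\be|X_t^{s,x,N}|^p\le C_\Theta(1+|x|^p+(t-s))$, uniformly in $N$.
\end{itemize}
This step, ensuring the uniform-in-$N$ moment bound, is the only point requiring care.

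\emph{Continuity in time for $X$, and stochastic continuity.} For \eqref{eq:prob}, combine the previous two steps through an $\varepsilon/3$ argument:
\[\be|X_t^{s,x}-X_r^{s,x}|\le \be|X_t^{s,x}-X_t^{s,x,N}|+\be|X_t^{s,x,N}-X_r^{s,x,N}|+\be|X_r^{s,x,N}-X_r^{s,x}|.\]
The outer terms must be small uniformly for $t$ near $r$. Here the H\"older bound above already gives uniformity for $t$ in a compact interval, because \eqref{eq:P(tau<t)} is monotone in $t$. First fix $N$ large, then let $t\to r$. The statement in probability follows by Markov's inequality. Stochastic continuity of $P^N(s,t)$ and $P(s,t)$ is exactly this convergence in probability of $X_t\to X_r$ as $t\to r$, for each fixed initial condition.
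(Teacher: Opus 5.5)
Your proposal is correct and takes essentially the same route as the paper. The shared steps are: an increment estimate for $X^{N}$ using the bounded drift $b_N$; a uniform-in-$N$ moment bound from $\langle x,b_N(t,x)\rangle\le L$ once $N^2\ge L/\lambda$; the localization $X^{N}=X$ on $\{\varsigma_N>T\}$, combined with a H\"older bound in terms of $\bp(\varsigma_N\le T)$; and finally the triangle inequality (fix $N$, let $t\to r$, then $N\to\infty$) followed by Markov's inequality. The differences are cosmetic: your small/large-jump split gives the increment rate $(t-r)^{1/2}+(t-r)$ where the paper gets $(t-r)^{1/\alpha}$, and you obtain the fixed-time $L^1$ convergence directly from the H\"older bound, whereas the paper first uses Vitali's theorem.
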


\begin{proof}
    For every fixed $N\in\mathbb N$, the definition of $b_N$ and \eqref{cond:polynomial growth} imply that
    \begin{equation}\label{eq:bN bounded}
        \sup_{t\in\mathbb R,\,x\in\mathbb R^d}|b_N(t,x)|\leq C_b(1+N^l).
    \end{equation}
    Let $s\leq r\leq t\leq T$. From the truncated equation, we have
    \[X_t^{s,x,N}-X_r^{s,x,N}=\int_r^t b_N(u,X_u^{s,x,N})\,\d u+\int_r^t\sigma(u,X_{u-}^{s,x,N})\d L_u.\]
    Using \eqref{eq:bN bounded}, the boundedness of $\sigma$, and the standard moment estimate, we obtain
    \begin{align}\label{eq:XN increment estimate}
        \be[|X_t^{s,x,N}-X_r^{s,x,N}|]&\leq C_b(1+N^l)(t-r)+C_{\alpha,d,\Lambda}(t-r)^\frac{1}{\alpha}.
    \end{align}
    Therefore, for every fixed $N$ and $r\geq s$,
    \begin{equation}\label{eq:XN L1 continuous}
        \lim_{t\to r}\be[|X_t^{s,x,N}-X_r^{s,x,N}|]=0.
    \end{equation}
    The case $t<r$ follows from the same estimate with the roles of $t$ and $r$ interchanged.

    We next establish a finite-horizon moment estimate that is uniform with respect to $N$. Fix $p\in(1,\alpha)$ and define $V(x):=(1+|x|^2)^\frac{p}{2}$. Let $N_0:=\lceil1\vee\sqrt{L/\lambda}\rceil$. For $N\geq N_0$, if $|x|\leq N$, then
    \[\langle x,b_N(t,x)\rangle=\langle x,b(t,x)\rangle\leq L-\lambda|x|^2\leq L.\]
    If $|x|>N$, then
    \begin{align*}
        \langle x,b_N(t,x)\rangle=\frac{|x|}{N}\left\langle\frac{N}{|x|}x,b\left(t,\frac{N}{|x|}x\right)\right\rangle\leq\frac{|x|}{N}(L-\lambda N^2)\leq0.
    \end{align*}
    Therefore, we have
    \begin{equation}\label{eq:bN uniform radial bound}
        \langle x,b_N(t,x)\rangle\leq L,~~ t\in\mathbb R,~x\in\mathbb R^d,~N\geq N_0.
    \end{equation}
    It follows from \eqref{eq:bN uniform radial bound} that
    \begin{align*}
        \langle\nabla V(x),b_N(t,x)\rangle=p(1+|x|^2)^{\frac{p}{2}-1}\langle x,b_N(t,x)\rangle\leq pL.
    \end{align*}
    Denote the generator of the truncated equation by $\mathscr{L}^N$. The standard estimate for the nonlocal part of the generator and the boundedness of $\sigma$ give $\mathscr{L}_\sigma^N V(x)\leq C_{\alpha,d,\Lambda}$.
    Hence, we obtain
    \begin{equation}\label{eq:uniform generator estimate bN}
        \mathscr{L}^N V(x)\leq C_{\Theta},~~N\geq N_0,
    \end{equation}
    where the constant is independent of $N$, $t$ and $x$. For $m\geq1$, define
    \[\varsigma_m^N:=\inf\{t\geq s:|X_t^{s,x,N}|\geq m\}.\]
    Applying Dynkin's formula to
    $V(X_{t\wedge\varsigma_m^N}^{s,x,N})$ and using \eqref{eq:uniform generator estimate bN}, we obtain
    \[\be[V(X_{t\wedge\varsigma_m^N}^{s,x,N})]
    \leq V(x)+C_{\Theta}(t-s),~~s\leq t\leq T.\]
    Letting $m\to\infty$ and applying Fatou's lemma yield
    \begin{equation}\label{eq:uniform rho moment XN}
        \sup_{N\geq N_0}\sup_{s\leq t\leq T}\be[|X_t^{s,x,N}|^p]\leq C_{\Theta,T-s}(1+|x|^p)<\infty.
    \end{equation}
    Recall that \[\varsigma_N:=\inf\{t\geq s:|X_t^{s,x}|\geq N\}.\]
    On the event $\{\varsigma_N>T\}$,
    \begin{equation}\label{eq:solutions agree finite interval}
        X_t^{s,x,N}=X_t^{s,x},~~s\leq t\leq T.
    \end{equation}
    Since $X^{s,x}$ is non-explosive and has c\`adl\`ag paths, we have
    \[\sup_{s\leq t\leq T}|X_t^{s,x}|<\infty
    ~~\bp-\text{a.s.}\]
    Therefore, we arrive at
    \[\bp(\varsigma_N\leq T)\to0~~\text{as}~N\to\infty,\]
    and \eqref{eq:solutions agree finite interval} implies that
    \begin{equation}\label{eq:XN converges pathwise}
        \sup_{s\leq t\leq T}|X_t^{s,x,N}-X_t^{s,x}|\to0~~\bp-\text{a.s.}
\end{equation}

For every fixed $t\in[s,T]$, Fatou's lemma, \eqref{eq:XN converges pathwise} and \eqref{eq:uniform rho moment XN} give
\[\be[|X_t^{s,x}|^{p}]\leq C_{\Theta,T-s}(1+|x|^p).\]
Therefore, we have
\[\sup_{N\geq N_0}\be[|X_t^{s,x,N}-X_t^{s,x}|^p]\leq 2^{p-1}(\sup_{N\geq N_0}\be[|X_t^{s,x,N}|^{p}]+\be[|X_t^{s,x}|^p])\leq C_{\Theta,T-s}(1+|x|^p).\]
Since $p>1$, the family
\[\{|X_t^{s,x,N}-X_t^{s,x}|:N\geq N_0\}\]
is uniformly integrable. By \eqref{eq:XN converges pathwise} and Vitali's convergence theorem,
\begin{equation}\label{eq:XN converges L1}
    \lim_{N\to\infty}
    \be[|X_t^{s,x,N}-X_t^{s,x}|]=0,
    ~~t\in[s,T].
\end{equation}
This proves the second assertion in
\eqref{eq:XN time dependence}.

For the stochastic continuity argument, we need the following finite-horizon uniform version of \eqref{eq:XN converges L1}. By \eqref{eq:solutions agree finite interval} and H\"older's inequality,
\begin{align*}
    \sup_{s\leq t\leq T}\be[|X_t^{s,x,N}-X_t^{s,x}|]&\leq\sup_{s\leq t\leq T}
    \be[(|X_t^{s,x,N}|+|X_t^{s,x}|)\bone_{\{\varsigma_N\leq T\}}]\\
    &\leq C_{\Theta,T-s}(1+|x|^p)^\frac{1}{p}
    \bp(\varsigma_N\leq T)^\frac{p-1}{p}.
\end{align*}
It follows that
\begin{equation}\label{eq:uniform L1 approximation}
    \lim_{N\to\infty}\sup_{s\leq t\leq T}
    \be[|X_t^{s,x,N}-X_t^{s,x}|]=0.
\end{equation}
For $s\leq r,t\leq T$, the triangle inequality gives
\begin{align*}
    \be[|X_t^{s,x}-X_r^{s,x}|]
    &\leq
    \be[|X_t^{s,x}-X_t^{s,x,N}|]+
    \be[|X_t^{s,x,N}-X_r^{s,x,N}|]+
    \be[|X_r^{s,x,N}-X_r^{s,x}|].
\end{align*}
Fixing $N$ and first letting $t\to r$, we obtain from
\eqref{eq:XN L1 continuous} that
\[\limsup_{t\to r}\be[|X_t^{s,x}-X_r^{s,x}|]\leq2\sup_{s\leq u\leq T}
    \be[|X_u^{s,x,N}-X_u^{s,x}|].\]
Letting $N\to\infty$ and using \eqref{eq:uniform L1 approximation}, we conclude that
\begin{equation}\label{eq: Xt to Xr}
    \lim_{t\to r}\be[|X_t^{s,x}-X_r^{s,x}|]=0.
\end{equation}
Finally, for every $\varepsilon>0$, Markov's inequality gives
\[\bp(|X_t^{s,x}-X_r^{s,x}|>\varepsilon)\leq\frac{1}{\varepsilon}\be[|X_t^{s,x}-X_r^{s,x}|]\to0\]
as $t\to r$. The same conclusion for $X^{s,x,N}$ follows from \eqref{eq:XN L1 continuous}. Taking $r=s$ and using $X_s^{s,x,N}=X_s^{s,x}=x$ proves the stochastic continuity of $P^N$ and $P$.
\end{proof}

In the following, we consider an open set $D\subset\br^d$ with a smooth boundary $\partial D$. For any $x\in D$, define the first exit time from the domain $D$ by
\begin{equation*}\label{eq:first exit time}
    \varsigma_D:=\inf\{t\geq s:X_t^{s,x,N}\notin D\}
\end{equation*}
with $\inf\emptyset=+\infty$.
For all $t>s$, and $x\in D$ and $y\in\br^d$, define
\begin{equation*}\label{eq:rN(s,x,t,y)}
    r_D^N(s,x,t,y):=\be[p^N(\varsigma_D,X_{\varsigma_D}^{s,x,N},t,y)\bone_{\{s\leq\varsigma_D<t\}}]
\end{equation*}
and
\begin{equation}\label{eq:pN(s,x,t,y)}
    p_D^N(s,x,t,y):=p^N(s,x,t,y)-r_D^N(s,x,t,y).
\end{equation}

Hunt \cite{H56} established that the transition probabilities of killed Brownian motion satisfy some fundamental properties, including the semigroup property and basic regularity properties; see also Chung--Zhao \cite[Chapter 2]{CZ12} for a detailed presentation of these results. The main ideas of their proof can be extended to time-inhomogeneous c\`adl\`ag Markov processes.

\begin{theorem}\label{thm:p_D^N(s,x,t,y)}
    Suppose that Assumptions \ref{A:b} and \ref{A:sigma} hold. Then, for every $t>s$, $x\in D$ and $A\in\cb(\br^d)$,
    \begin{equation}\label{eq:P(X,tau)}
        \bp(X_t^{s,x,N}\in A,t<\varsigma_D)=\int_{A}p_D^N(s,x,t,y)\d y.
    \end{equation}
    Moreover, the function $p_D^N(s,x,t,y)$ satisfies Chapman--Kolmogorov equation, i.e. for $t>r>s$ and every $x\in D$,
    \begin{equation}\label{eq:pN CK}
        p_D^N(s,x,t,y)=\int_{D}p_D^N(s,x,r,z)p_D^N(r,z,t,y)\d z,
    \end{equation}
    for almost every $y\in \br^d$. Let $G\Subset D$ be a compact convex set with nonempty interior. Then, for every fixed $h_0>0$, there exists a constant $\Pi:=\Pi(N,D,G,h_0,d,\alpha,C_b,l,C_\sigma,\Lambda)>0$ such that
    \begin{equation}\label{eq:p_D(s,x,t,y)>Lambda}
        p_D^N(s,x,s+h_0,y)\geq\Pi
    \end{equation}
    for every $s\in\mathbb R$ and $x\in G$, and for almost every $y\in G$.
\end{theorem}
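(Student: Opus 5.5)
The proof has three parts: the representation formula \eqref{eq:P(X,tau)} via the strong Markov property at $\varsigma_D$, Chapman--Kolmogorov via the Markov property at time $r$, and the lower bound \eqref{eq:p_D(s,x,t,y)>Lambda} via a chaining argument in $G$ built on Lemma \ref{lem:two-side estimate and gradient estimate}.

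\textbf{Step 1: the representation formula.} For $A\in\cb(\br^d)$, split according to whether the process has left $D$ before $t$:
\[
\bp(X_t^{s,x,N}\in A)=\bp(X_t^{s,x,N}\in A,\,t<\varsigma_D)+\bp(X_t^{s,x,N}\in A,\,\varsigma_D<t)+\bp(X_t^{s,x,N}\in A,\,\varsigma_D=t).
\]
The last event has probability zero. The strong Markov property of the time-inhomogeneous Feller process $X^{\cdot,\cdot,N}$ applies here; it comes from the Feller property in \cite{CZ18} together with the stochastic continuity in Lemma \ref{lem:stochastic continuous}, and in practice one works with the space--time process $(t,X_t)$ and approximates $\varsigma_D$ by dyadic stopping times. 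It turns the middle term into
\[
\be\Bigl[\bone_{\{\varsigma_D<t\}}\int_A p^N(\varsigma_D,X^{s,x,N}_{\varsigma_D},t,y)\,\d y\Bigr].
\]
Fubini then identifies this term with $\int_A r_D^N(s,x,t,y)\,\d y$, which gives \eqref{eq:P(X,tau)}. In particular $p_D^N\geq0$ a.e., and $p_D^N(s,x,t,\cdot)=0$ a.e. off $D$.

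\textbf{Step 2: Chapman--Kolmogorov.} Since $\{t<\varsigma_D\}=\{r<\varsigma_D\}\cap\{X\text{ stays in }D\text{ on }[r,t]\}$, the Markov property at the deterministic time $r$ gives
\[
\bp(X_t\in A,\,t<\varsigma_D)=\be\bigl[\bone_{\{r<\varsigma_D\}}\,\bp(X_t^{r,z}\in A,\,t<\varsigma_D^{r})\big|_{z=X_r}\bigr].
\]
Substituting \eqref{eq:P(X,tau)} twice yields, for every $A$,
\[
\int_A p_D^N(s,x,t,y)\,\d y=\int_A\int_D p_D^N(s,x,r,z)\,p_D^N(r,z,t,y)\,\d z\,\d y .
\]
Hence \eqref{eq:pN CK} holds for a.e.\ $y$.

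\textbf{Step 3: the lower bound (the main obstacle).} The upper bound in \eqref{eq:two-sides estimate} controls the killing term, since $r_D^N(s,x,t,y)\leq \sup\{p^N(u,w,t,y):u\in[s,t),\,w\notin D\}$. For $y\in G$ and $w\notin D$ we have $|w-y|\geq\delta:=\mathrm{dist}(G,D^c)>0$. Therefore, for short times $h=t-s$,
\[
r_D^N\leq C_{N,T}\,h\,\delta^{-d-\alpha}.
\]
The lower bound gives $p^N(s,x,s+h,y)\geq C^{-1}h\,(h^{1/\alpha}+|x-y|)^{-d-\alpha}$. Both sides are of order $h$, so for $|x-y|$ small both are comparable to $h$ at best, and the difference need not be positive. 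One therefore has to exploit the near-diagonal regime. If $|x-y|\leq h^{1/\alpha}$, then $p^N\geq C^{-1}2^{-d-\alpha}h^{-d/\alpha}$, which dominates $Ch\delta^{-d-\alpha}$ once $h\leq h_1$ for some small $h_1$ depending on $N,D,G$ and the constants in Lemma \ref{lem:two-side estimate and gradient estimate}. The restriction to a.e.\ $y$ comes from the fact that \eqref{eq:P(X,tau)} only determines $p_D^N$ up to Lebesgue-null sets.

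This gives a positive bound only on the small diagonal band $|x-y|\leq h^{1/\alpha}$. Chapman--Kolmogorov \eqref{eq:pN CK} spreads it to the whole of $G$, and this chaining is where convexity of $G$ enters. Choose $m$ with $h:=h_0/m\leq h_1$. Convexity lets one join any $x,y\in G$ by points $x=z_0,z_1,\dots,z_m=y$ in $G$ with $|z_{i}-z_{i+1}|\leq |x-y|/m$. Replacing each $z_i$ by a small ball $B_i\cap G$, where $G$ has positive-measure intersections because its interior is nonempty and $G$ is convex, and iterating \eqref{eq:pN CK} while restricting the integrals to these sets gives
\[
p_D^N(s,x,s+h_0,y)\geq \prod_i\Bigl(c\,h^{-d/\alpha}\Bigr)\prod_i |B_i\cap G|=:\Pi>0.
\]
This requires $m$ large enough that $\mathrm{diam}(G)/m\ll h^{1/\alpha}=(h_0/m)^{1/\alpha}$, which holds because $1/\alpha<1$. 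All constants are uniform in $s$, since the bounds in Lemma \ref{lem:two-side estimate and gradient estimate} depend only on $t-s$, so $\Pi$ depends only on the listed parameters.
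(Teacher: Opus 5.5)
Your argument is correct. Steps 1 and 2 are essentially the paper's. The paper approximates $\varsigma_D$ from above by dyadic times on $[s,r]$, uses the Markov property at the grid points, and passes to the limit via joint continuity of $p^N$ and Scheff\'e's lemma before letting $r\uparrow t$. It also proves $\bp(\varsigma_D=t)=0$, which you only assert: stochastic continuity rules out a jump at the fixed time $t$, and absolute continuity gives $\bp(X_t^{s,x,N}\in\partial D)=0$.

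Step 3 takes a genuinely different route.

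\emph{The paper's route.} It works at a fixed spatial scale. With $v=4C_{N,h_0}^{2/(d+\alpha)}$ and $c_0=\delta/(4v)$, it shows the following for small $h$, $|x-y|\le c_0$ and $\mathrm{dist}(y,D^c)\ge c_0v$:
\begin{itemize}
\item the killing term is at most $C_{N,h_0}h(h^{1/\alpha}+c_0v)^{-d-\alpha}$;
\item this is beaten by $C_{N,h_0}^{-1}h(h^{1/\alpha}+|x-y|)^{-d-\alpha}$, because $v$ absorbs $C_{N,h_0}^2$.
\end{itemize}
So the difference is already positive at order $h$ off the diagonal. Your remark that it ``need not be positive'' is only true when $|x-y|$ is comparable to $\delta$. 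The paper's chain therefore has a mesh $c_0$ independent of $h$, with intermediate points ranging over full balls $B(a_i,c_0/8)$.

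\emph{Your route.} You combine the crude bound $r_D^N\le Ch\delta^{-d-\alpha}$ with the on-diagonal blow-up $p^N\gtrsim h^{-d/\alpha}$ on $\{|x-y|\le h^{1/\alpha}\}$. This makes the one-step estimate immediate. The price is that the chain mesh must shrink to $(h_0/m)^{1/\alpha}$. That scale is compatible with straight-line steps of length $\mathrm{diam}(G)/m$ only because $\alpha>1$. The paper's fixed-scale chaining does not need $\alpha>1$ at this point.

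Two details to tighten in your version:
\begin{itemize}
\item Specify $\rho=\frac14(h_0/m)^{1/\alpha}$, and take $m$ with $\mathrm{diam}(G)/m\le\frac12(h_0/m)^{1/\alpha}$, so consecutive points are within $h^{1/\alpha}$.
\item For $\Pi$ to be independent of $x,y$ you need the uniform bound $\inf_{z\in G}\mathrm{Leb}(B(z,\rho)\cap G)\ge(\rho/\mathrm{diam}\,G)^d\,\mathrm{Leb}(G)$. This follows from convexity via the homothety $w\mapsto z+\frac{\rho}{\mathrm{diam}\,G}(w-z)$. Alternatively, integrate over full balls as the paper does: if $\rho\le\delta/2$, their points stay at distance $\ge\delta/2$ from $D^c$, and your killing bound survives with $\delta/2$ in place of $\delta$.
\end{itemize}
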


\begin{proof}
    By Lemma \ref{lem:stochastic continuous}, we know that the truncated process $X^{s,x,N}$ is stochastically continuous. Since $X^{s,x,N}$ is also a c\`adl\`ag process, it follows that
    \begin{equation*}
        \bp(X_t^{s,x,N}\neq X_{t-}^{s,x,N})=0.
    \end{equation*}
    Because the law of  $X^{s,x,N}$ is absolutely continuous with respect to Lebesgue measure, we have
    \begin{equation*}
        \bp(X_t^{s,x,N}\in\partial D)=0.
    \end{equation*}
    Observe that
    \[\{\varsigma_D=t\}\subset\{X_t^{s,x,N}\in\partial D\}\cup\{X_t^{s,x,N}\neq X_{t-}^{s,x,N}\}.\]
    Indeed, if $\varsigma_D=t$ and $X_t^{s,x,N}=X_{t-}^{s,x,N}$, then $X_{t-}^{s,x,N}\in\bar{D}$ and $X_t^{s,x,N}\in D^c$, which implies $X_t^{s,x,N}\in\partial D$.
    Then we derive that
    \begin{align*}
        \bp(\varsigma_D=t)&\leq\bp(X_t^{s,x,N}\in\partial D)+\bp(X_t^{s,x,N}\neq X_{t-}^{s,x,N})\\
        &=0.
    \end{align*}
    For all $A\in\cb(\br^d)$, it is not difficult to see that
    \begin{align*}
        \bp(X_t^{s,x,N}\in A,t<\varsigma_D)&=\bp(X_t^{s,x,N}\in A)-\bp(X_t^{s,x,N}\in A,s\leq\varsigma_D\leq t)\notag\\
        &=\int_{A}p^N(s,x,t,y)\d y-\bp(X_t^{s,x,N}\in A,s\leq\varsigma_D<t).
    \end{align*}
    For $n\in\bn$ and $t>r>s$, let
    \[t_k^n:=s+k(r-s)2^{-n},~~k=0,1,\cdots,2^n,\]
    and define
    \begin{align*} 
       T_n=\left\{\begin{array}{ll} t_k^n, & \mbox{if}~~t_{k-1}^n\leq \varsigma_D<t_k^n, \\ r,& \mbox{if}~~\varsigma_D\geq r. \end{array}\right. 
    \end{align*}
    Then we have
    \begin{align*}
        \bp(X_t^{s,x,N}\in A,s\leq\varsigma_D<r)&=\sum_{k=1}^{2^n}\bp(X_t^{s,x,N}\in A,t_{k-1}^n\leq \varsigma_D<t_k^n)\\
        &=\sum_{k=1}^{2^n}\be\left[\bp(X_t^{s,x,N}\in A|\cf_{t_k^n})\bone_{\{t_{k-1}^n\leq \varsigma_D<t_k^n\}}\right]\\
        &=\sum_{k=1}^{2^n}\be\left[\bp(X_t^{t_k^n,X_{t_k^n}^{s,x,N},N}\in A)\bone_{\{t_{k-1}^n\leq \varsigma_D<t_k^n\}}\right]\\
        &=\sum_{k=1}^{2^n}\be\left[\int_{A}p^N(t_k^n,X_{t_k^n}^{s,x,N},t,y)\d y\bone_{\{t_{k-1}^n\leq \varsigma_D<t_k^n\}}\right]\\
        &=\be\left[\int_{A}p^N(T_n,X_{T_n}^{s,x,N},t,y)\d y\bone_{\{s\leq \varsigma_D<r\}}\right].
    \end{align*}
On $\{\varsigma_D<r\}$, we have
$T_n\downarrow\varsigma_D^{s,x,N}$ as $n\to\infty$. By the right-continuity of
the sample paths,
\[X_{T_n}^{s,x,N}\longrightarrow X_{\varsigma_D}^{s,x,N}.\]
By Lemma \ref{lem:two-side estimate and gradient estimate}, we know that $p^N(s,x,t,y)$ is jointly continuous with respect to $(s,x,t,y)$. Hence,
\[p^N(T_n,X_{T_n}^{s,x,N},t,y)\longrightarrow p^N(\varsigma_D,X_{\varsigma_D}^{s,x,N},t,y)\]
for any $y\in \br^d$. For every fixed $\omega\in\{\varsigma_D<r\}$, the functions
\[y\mapsto p^N(T_n(\omega),X_{T_n(\omega)}^{s,x,N},t,y)\]
and
\[y\mapsto p^N(\varsigma_D(\omega),X_{\varsigma_D(\omega)}^{s,x,N},t,y)\]
are probability densities on $\br^d$. Therefore, Scheff\'e's lemma yields
\[\int_{\br^d}|p^N(T_n,X_{T_n}^{s,x,N},t,y)-p^N(\varsigma_D, X_{\varsigma_D}^{s,x,N},t,y)|\d y\longrightarrow0.\]
Then we have
\[\int_Ap^N(T_n,X_{T_n}^{s,x,N},t,y)\d y\longrightarrow\int_Ap^N(\varsigma_D, X_{\varsigma_D}^{s,x,N},t,y)\d y.\]
Since these integrals are bounded by one, the dominated convergence theorem and Tonelli's theorem give
\begin{align*}
    \bp(X_t^{s,x,N}\in A,s\leq \varsigma_D<r)&=\int_{A}\be[p^N(\varsigma_D,X_{\varsigma_D}^{s,x,N},t,y)\bone_{\{s\leq\varsigma_D<r\}}]\d y.
\end{align*}
Since $\{\varsigma_D<r\}\uparrow\{\varsigma_D<t\}$ as $r\uparrow t$, the continuity of probability measures from below and the monotone convergence theorem imply that
\begin{align*}
    \bp(X_t^{s,x,N}\in A,s\leq \varsigma_D<t)&=\int_A\be[p^N(\varsigma_D,X_{\varsigma_D}^{s,x,N},t,y)\bone_{\{s\leq \varsigma_D<t\}}]\d y\\
    &=\int_{A}r_D^N(s,x,t,y)\d y.
\end{align*}
Therefore, we obtain
\begin{align*}
    \bp(X_t^{s,x,N}\in A,t<\varsigma_D)&=\int_{A}p^N(s,x,t,y)\d y-\int_{A}r_D^N(s,x,t,y)\d y\\
    &=\int_{A}p_D^N(s,x,t,y)\d y,
\end{align*}
which shows that \eqref{eq:P(X,tau)} holds. For every fixed $t>s$ and $x\in D$, the function
\[y\mapsto p_D^N(s,x,t,y)=p^N(s,x,t,y)-r_D^N(s,x,t,y)\]
is the density of a sub-probability measure on $\br^d$. Therefore, $p_D^N(s,x,t,y)\geq 0$ for almost every $y\in \br^d$. By the fact that the killed process remains in $D$ before its lifetime, we have $p_D^N(s,x,t,y)=0$
for almost every $y\in D^c$. Since a transition density is determined only up to a Lebesgue-null set, define
\begin{equation}\label{eq:tilde p}
    \tilde{p}_D^N(s,x,t,y):=\bone_D(y)(p^N(s,x,t,y)-r_D^N(s,x,t,y))^+,~~y\in \br^d.
\end{equation}
Then $\tilde{p}_D^N$ is a nonnegative measurable version of the killed transition density and agrees with the function in \eqref{eq:pN(s,x,t,y)} for almost every $y\in\br^d$. In the sequel, we replace $p_D^N$ by this version and continue to denote it by $p_D^N$. For all $t>r>s$ and $A\in\cb(\br^d)$, it follows from the Markov property and Tonelli's theorem that
    \begin{align*}
        \int_Ap_D^N(s,x,t,y)\d y&=\bp(X_t^{s,x,N}\in A,t<\varsigma_D)\\
        &=\be\left[\bp(X_t^{r,X_r^{s,x,N},N}\in A,t<\varsigma_D^{r,X_r^{s,x,N},N})\bone_{\{r<\varsigma_D^{s,x,N}\}}\right]\\
        &=\int_D\left(\int_Ap_D^N(r,z,t,y)\d y\right)p_D^N(s,x,r,z)\d z\\
        &=\int_A\int_Dp_D^N(s,x,r,z)p_D^N(r,z,t,y)\d z\d y.
    \end{align*}
Therefore, \eqref{eq:pN CK} holds for almost every $y\in \br^d$.

For short, denote $v:=4{C_{N,h_0}}^{2/(d+\alpha)}$. Let $h:=t-s{\leq h_0}$ and suppose that for some $c>0$,
\[h\leq c^\alpha\wedge\alpha^\alpha c^\alpha v^\alpha d^{-\alpha},~|x-y|\leq c,~\mbox{dist}(y,D^c)\geq cv.\]
Note that $X_{\varsigma_D}^{s,x,N}\in D^c$ on the event $\{\varsigma_D^{s,x,N}<t\}$, then we have $|X_{\varsigma_D}^{s,x,N}-y|\geq cv$.
Define
\[g(u)=\frac{u}{(u^{1/\alpha}+cv)^{d+\alpha}}.\]
A direct calculation gives
\[g'(u)=\frac{cv-d/\alpha u^{1/\alpha}}{(u^{1/\alpha}+cv)^{d+\alpha+1}}.\]
Then the condition $h\leq\alpha^\alpha c^\alpha v^\alpha d^{-\alpha}$ implies that $g$ is non-decreasing on $[0,h]$. Hence we have
\begin{align*}
r_D^N(s,x,t,y)&=\be[p^N(\varsigma_D,X_{\varsigma_D}^{s,x,N},t,y)\bone_{\{\varsigma_D<t\}}]\\
    &\leq \be\left[\frac{{C_{N,h_0}}(t-\varsigma_D)\bone_{\{\varsigma_D<t\}}}{((t-\varsigma_D)^{1/\alpha}+|X_{\varsigma_D}^{s,x,N}-y|)^{d+\alpha}}\right]\\
    &\leq \frac{{C_{N,h_0}}h}{(h^{1/\alpha}+c v)^{d+\alpha}}.
\end{align*}
Since $h^{1/\alpha}\leq c,~|x-y|\leq c$ and $v=4{C_{N,h_0}^{2/(d+\alpha)}}$, we arrive at
\[h^{1/\alpha}+cv>2{C_{N,h_0}^{2/(d+\alpha)}}(h^{1/\alpha}+|x-y|).\]
Thus, we obtain
\[\frac{(h^{1/\alpha}+cv)^{d+\alpha}}{{C_{N,h_0}^2}(h^{1/\alpha}+|x-y|)^{d+\alpha}}>2^{d+\alpha}.\]
It then follows that
\begin{equation}\label{eq:p_D(s,x,t,y)>0 with |x-y|<a}
    \begin{aligned}
       p_D^N(s,x,t,y)&\geq \frac{{C_{N,h_0}^{-1}}h}{(h^{1/\alpha}+|x-y|)^{d+\alpha}}-\frac{{C_{N,h_0}}h}{(h^{1/\alpha}+c v)^{d+\alpha}}\\
       &=\frac{{C_{N,h_0}} h}{(h^{1/\alpha}+cv)^{d+\alpha}}\left(\frac{(h^{1/\alpha}+cv)^{d+\alpha}}{{C_{N,h_0}^2}(h^{1/\alpha}+|x-y|)^{d+\alpha}}-1\right)\\
       &>\frac{{C_{N,h_0}} h}{(h^{1/\alpha}+cv)^{d+\alpha}}>0.
    \end{aligned}
\end{equation}
We now use a chaining argument to show \eqref{eq:p_D(s,x,t,y)>Lambda}. Let
\[\delta=\mbox{dist}(G,D^c)>0,~~c_0=\frac{\delta}{4v},~~\rho=\frac{c_0}{8}.\]
Choose an integer $n\geq 2$ sufficiently large such that
\[\frac{h_0}{n}\leq c_0^\alpha\wedge\alpha^\alpha c_0^\alpha v^\alpha d^{-\alpha}\]
and
\[\frac{\mbox{diam}(G)}{n}\leq \frac{c_0}{2}.\]
For any $x,y\in G$, define
\[a_i=\left(1-\frac{i}{n}\right)x+\frac{i}{n}y,~~i=0,1,\cdots,n.\]
Since $G$ is convex, we have $a_i\in G$. Set $w_0=x$ and $w_n=y$, and restrict the intermediate variables to
\[w_i\in B(a_i,\rho),~~i=1,\cdots,n-1.\]
For every $i=1,\cdots,n-1$, we have
\[\mbox{dist}(w_i,D^c)\geq \delta-\rho>c_0v,\]
and
\[\mbox{dist}(w_n,D^c)=\mbox{dist}(y,D^c)\geq \delta>c_0v.\]
Moreover, for every $i=1,\cdots,n$, we have
\[|w_i-w_{i-1}|\leq 2\rho+\frac{\mbox{diam}(G)}{n}\leq c_0.\]
Then, by iterating the Chapman-Kolmogorov equation \eqref{eq:pN CK} and \eqref{eq:p_D(s,x,t,y)>0 with |x-y|<a} and applying Tonelli's theorem, we obtain for almost every $y\in G$, 
\begin{align*}
    p_D^N(s,x,s+h_0,y)&=\int_D\cdots\int_D\prod_{i=1}^n p_D^N\left(s+\frac{(i-1)h_0}{n},w_{i-1},s+\frac{ih_0}{n},w_i\right)\d w_1\cdots\d w_{n-1}\\
    &\geq\int_{B(a_1,\rho)}\cdots\int_{B(a_{n-1},\rho)}\prod_{i=1}^n p_D^N\left(s+\frac{(i-1)h_0}{n},w_{i-1},s+\frac{ih_0}{n},w_i\right)\d w_1\cdots\d w_{n-1}\\
    &>\left(\frac{C_Nh_0n^{-1}}{({h_0}^{1/\alpha}n^{-1/\alpha}+c_0 v)^{d+\alpha}}\right)^{n}(\mbox{Leb}(B(0,\rho)))^{n-1}\\
    &=:\Pi>0,
\end{align*}
where $\Pi$ is independent of $s,x$ and $y$.
\end{proof}

\begin{remark}
The transition density $p^N$ can be decomposed into the density of the process killed upon its first exit from $D$ and the contribution of trajectories that exit $D$ before time $t$. More precisely,
\[p^N(s,x,t,y)=p_D^N(s,x,t,y)+r_D^N(s,x,t,y).\]
This representation is known as the Hunt decomposition associated with killing the process upon its first exit from \(D\). The term $r_D^N(s,x,t,y)$ represents the contribution to the full transition density from sample paths that exit $D$ before time $t$ and then continue to evolve from the space point $X_{\varsigma_D}^{s,x,N}$ according to the original transition density $p^N$.

Since $p^N(s,x,t,\cdot)$ is a probability density for any $s<t$ and $x\in\br^d$, Tonelli's theorem gives
\[\int_{\mathbb R^d}r_D^N(s,x,t,y)\mathrm dy=\mathbb P(\varsigma_D<t).\]
Moreover, Theorem \ref{thm:p_D^N(s,x,t,y)} shows that $p_D^N(s,x,t,\cdot)$ is a sub-Markov transition density on $\br^d$ of the process killed upon leaving $D$. Therefore, $p_D^N(s,x,t,y)\geq 0$ for almost every $y\in\br^d$ and $p_D^N(s,x,t,y)=0$ for almost every $y\in D^c$. Furthermore, one can see that
\[\int_{\br^d}p_D^N(s,x,t,y)\d y=\bp(t<\varsigma_D).\]
With this convention \eqref{eq:tilde p}, the Hunt decomposition and the Chapman-Kolmogorov equation are understood in the almost-everywhere sense.
\end{remark}

\begin{theorem}\label{thm:periodic measure of SDEs}
    Suppose that Assumptions \ref{A:b}, \ref{A:sigma} and \ref{A:periodic} hold. Fix any $p\in[1,\alpha)$, and define $V(x)=(1+|x|^2)^\frac{p}{2}$. Then there exists a unique periodic measure $\mu_\cdot$ of SDE \eqref{SDEs} in $\cm_V(\br^d)$, which is continuous in 1-Wasserstein distance. Furthermore, the convergence results of Theorem \ref{thm:exist unique periodic measure} hold.
\end{theorem}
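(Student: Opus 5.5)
The plan is to verify Assumptions \ref{cond:Markov process} and \ref{cond:Markov process2} for the transition kernel $P(s,x,t,A)=\bp(X_t^{s,x}\in A)$ with $V(x)=(1+|x|^2)^{p/2}$. Theorem \ref{thm:exist unique periodic measure} then gives existence, uniqueness in $\cm_V(\br^d)$ and the convergence statements. Continuity in $\cw_1$ will be proved separately at the end.

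\emph{Periodicity and Lyapunov conditions.} By Assumption \ref{A:periodic} and the stationarity of the increments of $L$, the processes $X^{s+\tau,x}_{t+\tau}$ and $X^{s,x}_t$ have the same law. Hence $P(s,x,t,\cdot)=P(s+\tau,x,t+\tau,\cdot)$, and the Markov property follows from pathwise uniqueness in Theorem \ref{thm:E[|X|]}. The proof of \eqref{xp estimate} bounds $\be[(1+|X_t|^2)^{p/2}]$ itself, which gives
\[P(s,s+\tau)V(x)\le \e^{-\varpi\tau}V(x)+C_\Theta.\]
So \eqref{eq:Lyap function} holds with $\theta=\e^{-\varpi\tau}$ and $K=C_\Theta$, uniformly in $s$. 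The same bound for $t-s<\tau$ gives \eqref{eq:PV leq V+K} with $\vartheta=1$. Now choose $\mathcal R>2K/(1-\theta)$ with $\mathcal R>1$. Then $\cc=\{|x|\le (\mathcal R^{2/p}-1)^{1/2}\}$ is a nonempty closed ball.

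\emph{Local Doeblin condition (main obstacle).} Let $G$ be a closed ball containing $\cc$, let $D$ be a larger open ball, and take $N$ larger than the radius of $D$. Before $\varsigma_D$ the truncated and original equations have the same coefficients, so pathwise uniqueness gives $X^{s,x}_t=X^{s,x,N}_t$ for $t<\varsigma_D$. Therefore, for $x\in\cc\subset G$,
\[P(s,x,s+\tau,A)\ge \bp(X^{s,x,N}_{s+\tau}\in A\cap G,\ s+\tau<\varsigma_D)=\int_{A\cap G}p_D^N(s,x,s+\tau,y)\,\d y\ge \Pi\,\mathrm{Leb}(A\cap G).\]
This uses \eqref{eq:P(X,tau)} and \eqref{eq:p_D(s,x,t,y)>Lambda} with $h_0=\tau$. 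Setting $\varphi=\mathrm{Leb}(\cdot\cap G)/\mathrm{Leb}(G)$ and $\eta=(\Pi\,\mathrm{Leb}(G))\wedge\tfrac12$ gives \eqref{eq:local Doeblin}, uniformly in $s$ because $\Pi$ does not depend on $s$. The delicate point is the localization identity: the exit time of $X^{s,x}$ from $D$ must coincide with the exit time of $X^{s,x,N}$ from $D$. I would argue this by stopping both solutions at $\varsigma_D$ and invoking uniqueness for the stopped equations, since $b_N=b$ on $\bar D$.

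\emph{Continuity in $\cw_1$.} Fix $r$ and $t\to r$ with $r\le t$. Write $\mu_t=P^*(r,t)\mu_r$. Coupling $X^{r,\xi}_t$ with $X^{r,\xi}_r=\xi$, where $\xi\sim\mu_r$, gives
\[\cw_1(\mu_t,\mu_r)\le\int\be|X^{r,x}_t-x|\,\mu_r(\d x).\]
The integrand tends to $0$ pointwise by \eqref{eq:prob}. It is dominated by $C(1+|x|)$ for $t-r\le\tau$, using \eqref{eq:supxp estimate} with $p=1$. Since $\int|x|\mu_r(\d x)<\infty$, dominated convergence gives the limit $0$. For $t\uparrow r$, write $\mu_r=P^*(t,r)\mu_t$. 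The same bound then holds with $\sup_{t\le u\le r}\be|X^{u,x}_r-x|$, where the supremum runs over the initial time $u$. This term tends to $0$ by a stochastic-continuity estimate uniform in the initial time, obtained as in the proof of Lemma \ref{lem:stochastic continuous}. Domination is uniform because $\sup_t\int V\,\d\mu_t<\infty$ and $V$ dominates $|x|$.
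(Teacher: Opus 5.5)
Your verification of Assumptions \ref{cond:Markov process} and \ref{cond:Markov process2} follows the paper's argument exactly, up to the choice of radii. You get them from the bound on $\be[(1+|X_t^{s,x}|^2)^{p/2}]$ in Theorem \ref{thm:E[|X|]}, with $\theta=\e^{-\varpi\tau}$ and $\vartheta=1$. The same holds for the local Doeblin condition, which you obtain from Theorem \ref{thm:p_D^N(s,x,t,y)} with $h_0=\tau$ and transfer to $X^{s,x}$ by localizing at the common exit time.

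The $\cw_1$-continuity is where you differ. The paper fixes a far-past initial time $r-n\tau$ and a Dirac initial law, then uses a triangle inequality. Two terms are controlled by the pullback bound (iii) of Theorem \ref{thm:exist unique periodic measure}, uniformly for $t$ near $s_0$, using $\cw_1\le C\rho_\beta$. The third is controlled by Lemma \ref{lem:stochastic continuous} at that fixed initial time. You instead couple through the evolution identity. Your right-limit step is fine: the initial time $r$ and the integrating measure $\mu_r$ are both fixed, so \eqref{eq:prob} and dominated convergence apply.

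Your left-limit step is not justified as written. In $\int\be|X_r^{t,x}-x|\,\mu_t(\d x)$, both the initial time and the integrating measure move with $t$. Lemma \ref{lem:stochastic continuous} gives nothing uniform in the initial time. Even granting pointwise convergence in $x$, dominated convergence does not apply to a $t$-dependent measure. You would need convergence uniform for $x$ in compacts, plus uniform integrability of $|x|$ under $\{\mu_t\}$. For $p=1$, the bound $\sup_t\mu_t(V)<\infty$ does not give the latter.

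Both ingredients can be extracted. The bounds in the proof of Lemma \ref{lem:stochastic continuous} depend on the initial time only through the elapsed time and are locally uniform in $x$. Higher moments of $\mu_t$ follow by Fatou from the pullback limit. But periodicity makes this unnecessary. For $t>r-\tau$ you have $\mu_t=P^*(r-\tau,t)\mu_{r-\tau}=P^*(r-\tau,t)\mu_r$, hence
\[\cw_1(\mu_t,\mu_r)\le\int\be|X_t^{r-\tau,x}-X_r^{r-\tau,x}|\,\mu_r(\d x).\]
Here the initial time and the measure are both fixed, so your right-limit argument yields both one-sided limits at once.

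With this repair, your route is more elementary than the paper's. It uses only the evolution identity, periodicity and the moment bound, not the exponential ergodicity. The paper's route, by contrast, works only with Dirac initial conditions and never integrates transition-level estimates against $\mu_r$.
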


\begin{proof}
For any $t\geq s$, it follows from Theorem \ref{thm:E[|X|]} that there exists a constant $K>0$ such that
\begin{align*}
    P(s,t)V(x)=\be[(1+|X_t^{s,x}|^2)^\frac{p}{2}]\leq \e^{-\varpi(t-s)}V(x)+K,
\end{align*}
which verifies Assumption \ref{cond:Markov process2} with $\vartheta=1$. In particular, we take $t=s+\tau$, then Assumption \ref{cond:Markov process}(i) holds with $\theta=\re^{-\varpi\tau}$. Let $\mathcal{R}>2(1-\theta)^{-1}K$ and $\cc=\{x\in\br^d:V(x)\leq \mathcal{R}\}$. Since $\cc$ is a compact set, we may choose an integer $N>1$ sufficiently large such that $\cc\subset B(0,N-1)$. Let $U_1:=B(0,N)$ and $U_2:=B(0,N-1)$. Applying Theorem \ref{thm:p_D^N(s,x,t,y)} with $D=U_1$, $G=\bar{U}_2=\bar{B}(0,N-1)$ and $h_0=\tau$, we obtain a constant $\Pi>0$, independent of $s$, such that
\[p_{U_1}^N(s,x,s+\tau,y)\geq\Pi\]
for every $x\in\cc$ and almost every $y\in U_2$. Let
\[\varsigma_{U_1}^{s,x}:=\inf\{t\geq s:X_t^{s,x}\notin U_1\}
~~\text{and}~~\varsigma_{U_1}^{s,x,N}:=
    \inf\{t\geq s:X_t^{s,x,N}\notin U_1\}.\]
By the localization argument and pathwise uniqueness, we have 
\[\varsigma_{U_1}^{s,x}=
    \varsigma_{U_1}^{s,x,N}~~\bp-\text{a.s.}~~\text{and}~~X_{t\wedge\varsigma_{U_1}^{s,x}}^{s,x}=X_{t\wedge\varsigma_{U_1}^{s,x,N}}^{s,x,N}.\]
Therefore, for any $A\in\cb(\br^d)$ and $x\in\cc$, we have
    \begin{align*}
        P(s,x,s+\tau,A)&\geq P(s,x,s+\tau,A\cap U_2)\\
        &=\bp(X_{s+\tau}^{s,x}\in A\cap U_2)\\
        &\geq\bp(X_{s+\tau}^{s,x}\in A\cap U_2,s+\tau<\varsigma_{U_1}^{s,x})\\
        &=\bp(X_{s+\tau}^{s,x,N}\in A\cap U_2,s+\tau<\varsigma_{U_1}^{s,x,N})\\
        &=\int_{A\cap U_2}p_{U_1}^N(s,x,s+\tau,y)\d y\\
        &\geq\Pi\mbox{Leb}(A\cap U_2).
    \end{align*}
Define
\[\vp(A):=\frac{\mbox{Leb}(A\cap U_2)}{\mbox{Leb}(U_2)},~~A\in\cb(\br^d),\]
and set
\[\eta:=\frac{1}{2}\min\{1,\Pi\mbox{Leb}(U_2)\},\]
it is not difficult to see that $\eta\in(0,1)$ and $\vp(\cdot)\in\cp(\br^d)$. Then we have
\begin{align*}
    P(s,x,s+\tau,A)&\geq \Pi\mbox{Leb}(U_2)\vp(A)\geq \eta\vp(A).
\end{align*}
Taking the infimum over $x\in\cc$, we obtain
\[\inf_{x\in\cc}P(s,x,s+\tau,A)\geq \eta\vp(A)\]
for any $A\in\cb(\br^d)$ and $s\in\br$. Then all the conditions in Theorem \ref{thm:exist unique periodic measure} are satisfied. Therefore, SDE \eqref{SDEs} has a unique periodic measure $\mu_\cdot$ in $\cm_V(\br^d)$ and possesses the corresponding convergence results stated in Theorem \ref{thm:exist unique periodic measure}.

Finally, we prove the continuity of $t\mapsto\mu_t$ in $\mathcal W_1$. Fix $s_0\in\mathbb R$ and choose $r<s_0$. Restrict $t$ to a sufficiently small neighborhood of $s_0$ such that
$t>r$. For every $n\in\mathbb N$, the triangle inequality gives
\begin{equation}\label{eq:continuity periodic measure}
    \begin{aligned}
        \cw_1(\mu_{s_0},\mu_t)&\leq\cw_1(\mu_{s_0},P(r-n\tau,x,s_0,\cdot))+
    \cw_1(P(r-n\tau,x,t,\cdot),\mu_t)\\
    &\quad+\cw_1(P(r-n\tau,x,s_0,\cdot),P(r-n\tau,x,t,\cdot)).
    \end{aligned}
\end{equation}
Let $\varepsilon>0$, we may first choose $n$ sufficiently large such that
\[\cw_1(\mu_{s_0},P(r-n\tau,x,s_0,\cdot))<\frac{\varepsilon}{3}\]
and
\[\cw_1(P(r-n\tau,x,t,\cdot),\mu_t)<\frac{\varepsilon}{3}\]
uniformly for $t$ in a sufficiently small fixed neighborhood of $s_0$. With this $n$ fixed, the processes $X_{s_0}^{r-n\tau,x}$ and $X_t^{r-n\tau,x}$ provide a coupling of the two transition probabilities. Hence,
\begin{align*}
    \cw_1(P(r-n\tau,x,s_0,\cdot),P(r-n\tau,x,t,\cdot))\leq
    \be[|X_{s_0}^{r-n\tau,x}-X_t^{r-n\tau,x}|].
\end{align*}
Since $n$ is now fixed, Lemma \ref{lem:stochastic continuous} implies that the right-hand side converges to zero as $t\to s_0$. Therefore, there exists $\delta>0$ such that for any $|t-s_0|<\delta$,
\[\cw_1(P(r-n\tau,x,s_0,\cdot),P(r-n\tau,x,t,\cdot))<\frac{\varepsilon}{3}.\]
Substituting these three estimates into
\eqref{eq:continuity periodic measure} yields $\cw_1(\mu_{s_0},\mu_t)<\varepsilon$ for any $|t-s_0|<\delta$. The same argument applies separately when $t\downarrow s_0$ and
when $t\uparrow s_0$, because the initial time $r-n\tau$ is fixed and strictly smaller than both terminal times. Therefore, we have
\[\lim_{t\to s_0}\cw_1(\mu_t,\mu_{s_0})=0.\]
It follows that $t\mapsto\mu_t$ is continuous in the $1$-Wasserstein distance.
\end{proof}

The following corollary is an immediate consequence of the preceding theorem and will play a key role in the error analysis below.
\begin{corollary}\label{cor:W(P(s,x,t,cdot),mut)}
    Suppose that Assumptions \ref{A:b}, \ref{A:sigma} and \ref{A:periodic} hold. Then for every $1\leq p<\alpha$, there exist constants $\bar{C}, \gamma>0$ such that for all $x\in\br^d$ and $t\geq s$,
    \begin{equation*}
        \rho_\beta(P(s,x,t,\cdot),\mu_t)\leq \bar{C}(1+|x|^p)\re^{-\gamma(t-s)}.
    \end{equation*}
\end{corollary}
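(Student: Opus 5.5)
The plan is to derive this directly from Theorem \ref{thm:periodic measure of SDEs}, which gives the conclusions of Theorem \ref{thm:exist unique periodic measure} for the SDE with $V(x)=(1+|x|^2)^{p/2}$.

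\textbf{Step 1: apply the estimate at the largest admissible $n$.} Fix $1\leq p<\alpha$ and $t\geq s$, and set $m:=\lfloor (t-s)/\tau\rfloor$. If $m\geq 1$, take $n=m$ in \eqref{eq: rho_beta(P(s,x,t,cdot),mu_t)}:
\[
\rho_\beta(P(s,x,t,\cdot),\mu_t)\leq C_{\beta,\theta,\vartheta,K}(1+V(x))\bar{\eta}^{m}.
\]
Define $\gamma:=-\tau^{-1}\log\bar{\eta}>0$, which is positive because $\bar{\eta}\in(0,1)$. Since $m>(t-s)/\tau-1$, we have $\bar{\eta}^m\leq \bar{\eta}^{-1}\re^{-\gamma(t-s)}$.

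\textbf{Step 2: the case $0\leq t-s<\tau$.} Here \eqref{eq: rho_beta(P(s,x,t,cdot),mu_t)} gives nothing, so we bound the distance crudely. By the definition of $\rho_\beta$,
\[
\rho_\beta(P(s,x,t,\cdot),\mu_t)\leq 2+\beta P(s,t)V(x)+\beta\mu_t(V).
\]
Assumption \ref{cond:Markov process2}, with $\vartheta=1$ from the proof of Theorem \ref{thm:periodic measure of SDEs}, gives $P(s,t)V(x)\leq V(x)+K$. Next we need $\sup_t\mu_t(V)<\infty$, which holds because $\mu_\cdot\in\cm_V(\br^d)$. Explicitly, the proof of Theorem \ref{thm:exist unique periodic measure} gives $\mu_t(V)\leq V(x)+K(1-\theta)^{-1}$ for every $x$, and hence $\mu_t(V)\leq 1+K(1-\theta)^{-1}$ by taking $x=0$. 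So the distance is at most $C(1+V(x))\leq C\re^{\gamma\tau}(1+V(x))\re^{-\gamma(t-s)}$, using $t-s<\tau$.

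\textbf{Step 3: convert $V$ to $|x|^p$.} Since $p\leq 2$, $(1+|x|^2)^{p/2}\leq 1+|x|^p$, so $1+V(x)\leq 2(1+|x|^p)$. Taking
\[
\bar{C}:=2\max\{C_{\beta,\theta,\vartheta,K}\,\bar{\eta}^{-1},\,C\re^{\gamma\tau}\}
\]
covers both cases and gives the claimed bound.

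No step is a real obstacle. The only point needing care is the short-time regime $t-s<\tau$, which requires the uniform bound on $\mu_t(V)$ from Step 2. One should also note that $\beta$, $\bar{\eta}$ and the other constants depend on $p$ through $V$, and are fixed once $p$ is fixed.
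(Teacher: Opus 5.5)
Your proposal is correct and follows the route the paper intends. The paper writes no proof, calling the corollary an immediate consequence of Theorem \ref{thm:periodic measure of SDEs}, that is, of part (ii) of Theorem \ref{thm:exist unique periodic measure} with $n=\lfloor (t-s)/\tau\rfloor$ and $\gamma=-\tau^{-1}\log\bar{\eta}$; your separate handling of $0\leq t-s<\tau$ (via $P(s,t)V(x)\leq V(x)+K$ and $\sup_t\mu_t(V)<\infty$) and the conversion $1+V(x)\leq 2(1+|x|^p)$ supply exactly the details it leaves implicit.
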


\section{Discrete periodic measures via the truncated Euler--Maruyama schemes}\label{sec:dpm}

We first introduce a truncation mapping to handle the nonglobally Lipschitz drift. By the uniform local Lipschitz continuity of $b$ in the spatial variable, we choose a strictly increasing continuous function $\vp:[0,\infty)\to(0,\infty)$ such that $\vp(R)\to\infty$ as $R\to\infty$ and, for every $R>0$,
\begin{equation}\label{eq:vp}
    \sup_{t\in\br}\sup_{x\neq y,~\substack{|x|\vee|y|\leq R}}\frac{|b(t,x)-b(t,y)|}{|x-y|}\leq \vp(R),~~x,y\in\br^d.
\end{equation}
In particular, under Assumption \ref{cond:one-side Lip and initial value} (\romannumeral1), we choose $\vp(R):=\ell(1+2R^q)$; see Remark \ref{remark} for details. Choose a finite constant
\[M\geq \vp(1)\vee 1\vee\sup_{t\in\br}|b(t,0)|.\]
For any $\Delta \in(0,1)$, set $R_\Delta:=\vp^{-1}(M\Delta^{-\frac{1}{3}})$, where $\vp^{-1}$ is the inverse function of $\vp$ and defined on $[\vp(0),\infty)$. Since $M\Delta^{-\frac{1}{3}}\geq\vp(1)$, the radius $R_\Delta$ is well defined and satisfies $R_\Delta\geq1$. Define a truncation mapping $\pi_\Delta:\br^d\to\br^d$ by
\begin{equation}\label{eq:pi_Delta}
    \pi_\Delta(x)=\begin{cases}
        x,&|x|\leq R_\Delta,\\
        R_\Delta\frac{x}{|x|}, &|x|>R_\Delta.
    \end{cases}
\end{equation}
As the projection onto the closed ball
$\bar{B}(0,R_\Delta)$, the mapping $\pi_\Delta$ satisfies
\[|\pi_\Delta(x)-\pi_\Delta(y)|\leq|x-y|~~\text{and}~~|\pi_\Delta(x)|\leq|x|\]
for any $x,y\in\br^d$. Therefore, for every $t\in\br$ and $x,y\in\br^d$, it follows that
\begin{equation}\label{eq:b(t,pix)-b(t,piy)}
    |b(t,\pi_\Delta(x))-b(t,\pi_\Delta(y))|\leq M\Delta^{-\frac{1}{3}}|\pi_\Delta(x)-\pi_\Delta(y)|\leq M\Delta^{-\frac{1}{3}}|x-y|,
\end{equation}
and
\begin{equation}\label{eq:b(t,pix)}
    |b(t,\pi_\Delta(x))|\leq M(1+\Delta^{-\frac{1}{3}}|\pi_\Delta(x)|)\leq M(1+\Delta^{-\frac{1}{3}}|x|).
\end{equation}

Fix $s\in\br$ and $x\in\br^d$. To simplify the notation, write $X_t:=X_{s+t}^{s,x}$ for $t\geq 0$ and define
\[b^s(t,x):=b(s+t,x),~~\sigma^s(t,x):=\sigma(s+t,x),~~L_t^s:=L_{s+t}-L_s.\]
Then SDE \eqref{SDEs} takes the form
\begin{equation}
    \d X_t=b^s(t,X_t)\d t+\sigma^s(t,X_{t-})\d L_t^s,~~t\geq0
\end{equation}
with $X_0=x$. Fix a positive integer $N$ such that $\Delta=\frac{\tau}{N}\in(0,1)$. For the numerical approximation, write $\hat{X}_k:=\hat{X}_{s+k\Delta}^{s,\pi_\Delta(x)}$, $\Tilde{X}_k:=\Tilde{X}_{s+k\Delta}^{s,x}$ and $\Delta^sL_k:=L_{(k+1)\Delta}^s-L_{k\Delta}^s$. For each $k\in\bn_0$, the truncated Euler--Maruyama scheme \eqref{TEM} can be rewritten as
\begin{equation}\label{eq:TEM}
    \begin{cases}
        \hat{X}_{k}=\pi_\Delta(\Tilde{X}_{k}),\\
        \Tilde{X}_{k+1}=\hat{X}_k+b^s(k\Delta,\hat{X}_k)\Delta+\sigma^s(k\Delta,\hat{X}_k)\Delta^s L_k
    \end{cases}
\end{equation}
with the initial condition $\tilde{X}_0=x\in\br^d$. In particular, $\hat{X}_0=\pi_\Delta(x)$ and $\hat{X}_k\in\bar{B}(0,R_\Delta)$ for every $k\in\bn_0$. Equivalently,
\[\hat{X}_{k+1}=\pi_\Delta(\hat{X}_k+b^s(k\Delta,\hat{X}_k)\Delta+\sigma^s(k\Delta,\hat{X}_k)\Delta^s L_k),~~\hat{X}_0=\pi_\Delta(x).\]
It follows from \eqref{eq:b(t,pix)} that 
\begin{equation}\label{eq:b(hat X) linear growth}
    |b^s(k\Delta,\hat{X}_k)|\leq M(1+\Delta^{-\frac{1}{3}}|\hat{X}_k|),~~k\in\bn_0.
\end{equation}

For each $i\in\bz$ and $y\in\bar{B}(0,R_\Delta)$, let $\{\hat{X}_{s+j\Delta}^{s+i\Delta,y}\}_{j\geq i}$ denote the time-inhomogeneous Markov chain defined by \eqref{eq:TEM} starting at time $s+i\Delta$ with $\tilde{X}_{s+i\Delta}^{s+i\Delta,y}=\hat{X}_{s+i\Delta}^{s+i\Delta,y}=y$. For an arbitrary $x\in\br^d$, the truncated numerical approximation is written as $\{\hat{X}_{s+j\Delta}^{s+i\Delta,\pi_\Delta(x)}\}_{j\geq i}$. It also satisfies
\[\hat{X}_{s+j\Delta}^{s+i\Delta,\pi_\Delta(x)}=\pi_\Delta\bigl(\tilde{X}_{s+j\Delta}^{s+i\Delta,x}\bigr),~~j\geq i,\]
where $\{\tilde{X}_{s+j\Delta}^{s+i\Delta,x}\}_{j\geq i}$ is the sequence in \eqref{eq:TEM}
with initial value $\tilde{X}_{s+i\Delta}^{s+i\Delta,x}=x$.

For $i,j\in\bz$ with $i\leq j$, $y\in\bar{B}(0,R_\Delta)$ and $A\in\cb(\br^d)$, define the transition kernel by
\begin{equation*}
    \hat{P}(s+i\Delta,y,s+j\Delta,A):=\bp\bigl(\hat{X}_{s+j\Delta}^{s+i\Delta,y}\in A\bigr).
\end{equation*}
For every $x\in\br^d$, extend this notation by projection:
\begin{equation*}
    \hat{P}(s+i\Delta,x,s+j\Delta,A):=\hat{P}(s+i\Delta,\pi_\Delta(x),s+j\Delta,A)=\bp\bigl(\hat{X}_{s+j\Delta}^{s+i\Delta,\pi_\Delta(x)}\in A\bigr).
\end{equation*}
This extension agrees with the transition kernel on $\bar{B}(0,R_\Delta)$, since $\pi_\Delta(y)=y$ for $y\in\bar{B}(0,R_\Delta)$. In particular, its one-step transition probability is
\begin{equation*}
    \hat{P}(s+i\Delta,x,s+(i+1)\Delta,A)=\bp(\pi_\Delta(\pi_\Delta(x)+b^s(i\Delta,\pi_\Delta(x))\Delta+\sigma^s(i\Delta,\pi_\Delta(x))\Delta^sL_i)\in A).
\end{equation*}
It follows that for every $x\in\br^d$ and $i\leq j$,
\begin{equation}\label{eq:P(i,x,i,A)}
    \hat{P}(s+i\Delta,x,s+i\Delta,\cdot)=\delta_{\pi_\Delta(x)},
\end{equation}
and
\begin{equation*}
    \hat{P}(s+i\Delta,x,s+j\Delta,\bar{B}(0,R_\Delta))=1.
\end{equation*}
Therefore, the restriction of $\hat{P}$ to initial states in $\bar{B}(0,R_\Delta)$ is the Markov transition function of the projected chain on $\bar{B}(0,R_\Delta)$. In particular,
\[\hat{P}(s+i\Delta,y,s+i\Delta,\cdot)=\delta_y,~~y\in\bar{B}(0,R_\Delta).\]
For $x\in\br^d$ outside this ball, the notation $\hat{P}(s+i\Delta,x,s+j\Delta,\cdot)$ denotes the distribution obtained by initializing the projected chain at $\pi_\Delta(x)$.

By the independent increments of the $\alpha$-stable process together with the numerical recursion, the Chapman--Kolmogorov equation holds:
\begin{equation}\label{eq:C-K equation}
    \hat{P}(s+i\Delta,x,s+j\Delta,A)=\int_{\bar{B}(0,R_\Delta)}\hat{P}(s+m\Delta,y,s+j\Delta,A)\hat{P}(s+i\Delta,x,s+m\Delta,\d y),
\end{equation}
for every $x\in\br^d$, $A\in\cb(\br^d)$ and $i,m,j\in\bz$ with $i\leq m\leq j$. The cases $m=i$ and $m=j$ follow from \eqref{eq:P(i,x,i,A)} and the identity $\pi_\Delta(y)=y$ on $\bar{B}(0,R_\Delta)$. Since all the transition probabilities are supported on $\bar{B}(0,R_\Delta)$, the integral in \eqref{eq:C-K equation} can equivalently be taken over $\br^d$.

Let $B_b(\br^d)$ denote the space of bounded Borel measurable functions on $\br^d$, and let $\bar{\cm}(\br^d)$ denote the space of finite signed Borel measures on $\br^d$. For $i,j\in\bz$ with $i\leq j$, define the associated Markov operator by
\begin{equation*}\label{eq:P(i,j)f}
    \hat{P}(s+i\Delta,s+j\Delta)f(x):=\int_{\br^d}f(y)\,\hat P(s+i\Delta,x,s+j\Delta,\d y)=\be\bigl[f\bigl(\hat{X}_{s+j\Delta}^{s+i\Delta,\pi_\Delta(x)}\bigr)\bigr]
\end{equation*}
for every $x\in\br^d$. Its dual operator is defined by
\begin{equation}\label{eq:P*(i,j)mu}
    \hat{P}^*(s+i\Delta,s+j\Delta)\mu(A):=\int_{\br^d}\hat{P}(s+i\Delta,x,s+j\Delta,A)\mu(\d x),~~A\in\cb(\br^d).
\end{equation}
In particular, $\hat{P}^*(s+i\Delta,s+j\Delta)$ maps probability measures to probability measures supported on $\bar{B}(0,R_\Delta)$. For every $f\in B_b(\br^d)$ and $\mu\in\bar{\cm}(\br^d)$, Fubini's theorem yields
\begin{equation*}
    \int_{\br^d}f(y)(\hat{P}^*(s+i\Delta,s+j\Delta)\mu)(\d y)=\int_{\br^d}(\hat{P}(s+i\Delta,s+j\Delta)f)(x)\mu(\d x).
\end{equation*}
The integral in \eqref{eq:P*(i,j)mu} is taken over all of $\br^d$ to allow initial measures that assign mass outside $\bar{B}(0,R_\Delta)$.

The Chapman--Kolmogorov equation \eqref{eq:C-K equation} gives
\begin{equation*}
    \hat P(s+i\Delta,s+j\Delta)=\hat P(s+i\Delta,s+m\Delta)\hat P(s+m\Delta,s+j\Delta)
\end{equation*}
and
\begin{equation*}
    \hat P^*(s+i\Delta,s+j\Delta)=\hat P^*(s+m\Delta,s+j\Delta)\hat P^*(s+i\Delta,s+m\Delta),
\end{equation*}
for $i\leq m\leq j$. Moreover, the projected extension satisfies
\begin{equation*}\label{eq:projected extension}
    \hat{P}(s+i\Delta,s+i\Delta)f=f\circ\pi_\Delta,~~\hat {P}^*(s+i\Delta,s+i\Delta)\mu=(\pi_\Delta)_\#\mu,
\end{equation*}
where $(\pi_\Delta)_\#\mu$ is the pushforward measure defined by
\[((\pi_\Delta)_\#\mu)(A):=\mu(\{x\in\br^d:\pi_\Delta(x)\in A\}),~~A\in\cb(\br^d).\]
When the operators are viewed on $B_b(\bar{B}(0,R_\Delta))$ and $\bar{\cm}(\bar{B}(0,R_\Delta))$, respectively, they satisfy the usual identity property
\[\hat{P}(s+i\Delta,s+i\Delta)=I,~~\hat{P}^*(s+i\Delta,s+i\Delta)=I.\]
Under Assumption \ref{A:periodic}, the coefficients $b$ and $\sigma$ are $\tau$-periodic. Together with the stationary increments of the $\alpha$-stable process and the identity $N\Delta=\tau$, this yields
\begin{equation}\label{eq:P(i+tau,x,j+tau,A)}
    \hat{P}(s+i\Delta+\tau,x,s+j\Delta+\tau,A)=\hat{P}(s+i\Delta,x,s+j\Delta,A),~~i\leq j.
\end{equation}
For each $i\in\bz$, define the period kernel on $\bar{B}(0,R_\Delta)$ by
\begin{equation*}
    Q_{s+i\Delta}(y,A):=\hat{P}(s+i\Delta,y,s+i\Delta+\tau,A),~~y\in\bar{B}(0,R_\Delta).
\end{equation*}
Equations \eqref{eq:C-K equation} and \eqref{eq:P(i+tau,x,j+tau,A)} imply that
\begin{equation}
    \hat{P}(s+i\Delta,x,s+i\Delta+k\tau,\cdot)=Q_{s+i\Delta}^k(\pi_\Delta(x),\cdot),~~x\in\br^d,~k\in\bn_0,
\end{equation}
where $Q_{s+i\Delta}^0(y,\cdot)=\delta_y$ for $y\in\bar{B}(0,R_\Delta)$. Thus the long-time behavior of the transition probabilities on the left-hand side is determined by the time-homogeneous Markov kernel $Q_{s+i\Delta}$ on $\bar{B}(0,R_\Delta)$.

\begin{condition}\label{cond:one-side Lip and initial value}
\begin{description}
    \item[(\romannumeral1)] There exist constants $\ell_0,\ell,\ell_b>0$ and $q>0$ such that for any $t\in\br$ and $x,y\in\br^d$,
    \begin{equation}\label{eq:one-side Lip}
        \langle x-y,b(t,x)-b(t,y)\rangle\leq \ell_0|x-y|^2,
    \end{equation}
    and for any $s,t\in\br$ and $x,y\in\br^d$,
    \begin{equation}\label{eq:another loc Lip}
        |b(s,x)-b(t,y)|\leq \ell_b|t-s|^\frac{1}{\alpha}+\ell(1+|x|^{q}+|y|^{q})|x-y|.
    \end{equation}
    \item[(\romannumeral2)] There exist constants $\lambda>0$ and $L>0$ such that for any $t\in\mathbb R$ and $x\in\mathbb R^d$,
    \begin{equation}\label{eq:numerical weak dissipativity}
        \langle x,b(t,x)\rangle \leq L-\lambda|x|^2.
    \end{equation}
    \item [(\romannumeral3)]  There exist constants $\ell_\sigma,C_\sigma>0$ such that for all $s,t\in\br$ and $x,y\in\br^d$,
    \begin{equation}\label{eq:sigma Holder in time}
        \Vert\sigma(s,x)-\sigma(t,y)\Vert\leq \ell_\sigma|t-s|^\frac{1}{\alpha}+C_\sigma|x-y|,
    \end{equation}
    and there exists a constant $\Lambda\geq1$ such that, for all
    $t\in\mathbb R$ and $x,\xi\in\mathbb R^d$,
    \begin{equation}\label{eq:numerical ellipticity}
        \Lambda^{-1}|\xi|^2\leq\langle\sigma(t,x)\sigma(t,x)^\top\xi,\xi\rangle\leq
        \Lambda|\xi|^2.
    \end{equation}
\end{description}
\end{condition}

\begin{remark}\label{remark}
   Under Condition \ref{cond:one-side Lip and initial value}(\romannumeral1), for every $R>0$, $t\in\mathbb R$, and $x,y\in\mathbb R^d$ with $|x|\vee|y|\leq R$, we have
   \[|b(t,x)-b(t,y)|\leq\ell(1+2R^q)|x-y|.\]
   Thus, the function $\vp$ in \eqref{eq:vp} may be chosen as
   \[\vp(R):=\ell(1+2R^q),~R\geq0.\]
   This function is strictly increasing and continuous from $[0,\infty)$ onto $[\ell,\infty)$, with inverse
   \[\vp^{-1}(u)=\left(\frac{u}{2\ell}-\frac{1}{2}\right)^\frac{1}{q},~ u\geq\ell.\]
   By the time continuity and $\tau$-periodicity of $b$, we have $B_0:=\sup_{t\in\mathbb R}|b(t,0)|=\max_{t\in[0,\tau]}|b(t,0)|<\infty$. Setting the two time arguments equal and taking $y=0$ in \eqref{eq:another loc Lip}, we obtain
   \begin{equation}\label{eq:numerical b ply growth}
        \begin{aligned}
            |b(t,x)|&\leq B_0+\ell(1+|x|^q)|x|\leq (M+2\ell)(1+|x|^{q+1}),~~t\in\mathbb R,~x\in\mathbb R^d.
        \end{aligned}
   \end{equation}
   Consequently, \eqref{eq:another loc Lip} together with $B_0<\infty$, implies the polynomial growth requirement \eqref{cond:polynomial growth}, with the admissible choices $l=q+1$ and $C_b=B_0+2\ell$. Moreover, \eqref{cond:loc Lip} holds with $\ell(R):=\max\{1,\ell(1+2R^q)\}$ for $R\ge1$. Together with \eqref{eq:numerical weak dissipativity}, this shows that Condition \ref{cond:one-side Lip and initial value}(i)--(ii) implies Condition \ref{A:b} under the assumed $\tau$-periodicity of $b$.
\end{remark}

Throughout the remainder of the paper, we use the notation
\[\Xi:=(p,\alpha,d,q,C_b,\lambda,L,C_\sigma,\Lambda,\ell_0,\ell,\ell_b,\ell_\sigma,M,\tau)\]
to denote the collection of relevant parameters. The notation $C_{\cdot}$ denotes a positive constant whose dependence is specified by the subscript and whose value may change from one occurrence to another.

\begin{lemma}\label{lem:Lyapunov condition for TEM}
Suppose that Assumption \ref{cond:one-side Lip and initial value} holds. Then, for every $p\in[1,\alpha)$, there exist constants $C_0>0$ and $\Delta^*\in(0,1)$, independent of $s,x,k$ and $\Delta$, such that for every $s\in\br$, $x\in\br^d$, $\Delta\in(0,\Delta^*]$ and $k\in\bn_0$,
\begin{equation}\label{eq:EV(Yi+1) leq gamma V(Yi)+C1 Delta t}
    \be[V(\hat{X}_{k+1})|\cf_k]\leq \bar{\gamma} V(\hat{X}_k)+C_0\Delta,
\end{equation}
where $V(x):=(1+|x|^2)^\frac{p}{2}$ and $\bar{\gamma}=(1-2^{-1}p\lambda\Delta)\in[1/2,1)$. Moreover, for every $n\in\bn_0$,
\begin{equation}\label{eq:E|Y(s+i Delta t)| leq C(1+e-beta Delta t)}
    \be[V(\hat{X}_n)]\leq \re^{-\hat{\varpi} n\Delta}V(x)+\frac{2C_0}{p\lambda},
\end{equation}
where $\hat{\varpi}=-\frac{\ln(1-2^{-1}p\lambda\Delta)}{\Delta}>0$. 
\end{lemma}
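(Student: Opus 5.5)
Since $\pi_\Delta$ is the projection onto a ball centred at the origin, $|\pi_\Delta(y)|\le|y|$, so $V(\hat X_{k+1})=V(\pi_\Delta(\tilde X_{k+1}))\le V(\tilde X_{k+1})$. It therefore suffices to bound $\be[V(\tilde X_{k+1})\mid\cf_k]$. Write $y:=\hat X_k\in\bar B(0,R_\Delta)$, $u:=y+b^s(k\Delta,y)\Delta$ and $\tilde X_{k+1}=u+\sigma^s(k\Delta,y)\Delta^sL_k$, where $\Delta^sL_k$ is independent of $\cf_k$. Conditionally on $\cf_k$ I will treat the two pieces separately: a deterministic drift step $y\mapsto u$, followed by one $\alpha$-stable increment with frozen coefficient.

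\emph{Drift step.} We have $|u|^2=|y|^2+2\Delta\langle y,b^s(k\Delta,y)\rangle+\Delta^2|b^s(k\Delta,y)|^2$. By \eqref{eq:numerical weak dissipativity} and \eqref{eq:b(hat X) linear growth},
\[
1+|u|^2\le 1+|y|^2-2\lambda\Delta|y|^2+2L\Delta+2M^2\Delta^2+2M^2\Delta^{4/3}|y|^2 .
\]
For $\Delta\le\Delta^*$ small we have $2M^2\Delta^{1/3}\le\lambda/2$, which gives
\[
1+|u|^2\le(1-\tfrac32\lambda\Delta)(1+|y|^2)+C\Delta .
\]
Raising to the power $p/2\le1$ and using concavity, $(a+b)^{p/2}\le a^{p/2}+\tfrac p2 a^{p/2-1}b$ for $a\ge1$. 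Together with $(1-x)^{p/2}\le 1-\tfrac p2 x$, this yields
\[
V(u)\le(1-\tfrac{3p}{4}\lambda\Delta)V(y)+C\Delta .
\]
The key point is that the truncation radius $R_\Delta=\vp^{-1}(M\Delta^{-1/3})$ makes $\Delta^2|b|^2\lesssim\Delta^{4/3}|y|^2$, which is $o(\Delta)|y|^2$. This is the step I expect to be the main obstacle. It needs \eqref{eq:b(hat X) linear growth} rather than the polynomial growth of $b$, and it needs $\Delta^*$ to depend only on $(\lambda,M)$.

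\emph{Noise step.} I will show that for any fixed $u$ and any matrix $\sigma_0$ with $\|\sigma_0\|\le\sqrt{d\Lambda}$,
\[
\be[V(u+\sigma_0\Delta^sL_k)]\le V(u)+C\Delta(1+V(u))^{(p-1)/p}+C\Delta .
\]
My first attempt is Itô's formula for the process $r\mapsto u+\sigma_0(L_{r}^s-L_{k\Delta}^s)$ on $[k\Delta,(k+1)\Delta]$. The generator bound for $V$ from the proof of Theorem \ref{thm:E[|X|]} (without drift) gives $\mathscr L_\sigma V\le C_\Theta$, since $\|\nabla^2V\|$ is bounded and $|\nabla V|\le p|x|^{p-1}$ is controlled in the large-jump part by $\int_{|z|>1}|z|^{p}\nu(\d z)<\infty$ together with the $p-1<1$ power. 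In fact, the same estimate used there shows $\mathscr L_\sigma V\le C$ uniformly in $x$, so Dynkin's formula gives directly
\[
\be[V(u+\sigma_0\Delta^sL_k)]\le V(u)+C\Delta .
\]
Combining the two steps,
\[
\be[V(\hat X_{k+1})\mid\cf_k]\le(1-\tfrac{3p}{4}\lambda\Delta)V(\hat X_k)+C\Delta\le\bar\gamma V(\hat X_k)+C_0\Delta ,
\]
with room to spare. Shrinking $\Delta^*$ further ensures $p\lambda\Delta\le1$, so that $\bar\gamma\in[1/2,1)$.

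\emph{Iteration.} Taking expectations and iterating gives
\[
\be[V(\hat X_n)]\le\bar\gamma^n\be[V(\hat X_0)]+C_0\Delta\sum_{i<n}\bar\gamma^i\le\bar\gamma^nV(x)+\frac{C_0\Delta}{1-\bar\gamma}.
\]
Here $V(\hat X_0)=V(\pi_\Delta(x))\le V(x)$, $\bar\gamma^n=\re^{-\hat\varpi n\Delta}$ by the definition of $\hat\varpi$, and $C_0\Delta/(1-\bar\gamma)=2C_0/(p\lambda)$. This is exactly \eqref{eq:E|Y(s+i Delta t)| leq C(1+e-beta Delta t)}.
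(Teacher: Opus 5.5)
Your overall structure matches the paper's: reduce to $V(\tilde X_{k+1})$ via $|\pi_\Delta(w)|\le|w|$, freeze $y=\hat X_k$ using the independence of $\Delta^sL_k$ from $\cf_k$, do a deterministic drift step, then a frozen-coefficient noise step via Dynkin's formula, and iterate. The iteration is correct. Your drift step (expanding $|u|^2$ and using the tangent-line bound for $t\mapsto t^{p/2}$) is a valid substitute for the paper's integral along $r\mapsto y+rb^s(k\Delta,y)$, and it even yields the factor $1-\frac{3p}{4}\lambda\Delta$. One slip: $a=(1-\frac32\lambda\Delta)(1+|y|^2)$ need not satisfy $a\ge1$ (take $y=0$). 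Use the tangent-line inequality for all $a>0$ together with $a^{\frac p2-1}\le 2$. This holds once $\frac32\lambda\Delta\le\frac12$, which also makes $(1-x)^{p/2}\le1-\frac p2x$ applicable.

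The genuine gap is in the noise step. The estimate in the proof of Theorem \ref{thm:E[|X|]} does \emph{not} give $\mathscr L_\sigma V\le C$ uniformly in $x$. There the large-jump part is controlled through $|\nabla V(w)|\le p|w|^{p-1}$, which gives $p\int_{|z|>1}(|x|+|\sigma z|)^{p-1}|\sigma z|\,\nu(\d z)\le C(1+|x|^{p-1})$. The $|x|^{p-1}$ term is harmless there only because the dissipative drift absorbs it, and you have removed the drift. The uniform bound is true, but it requires an additional idea. By the symmetry of $\nu$ on $\{|z|>1\}$, the large-jump integrand may be replaced by $\frac12[V(x+w)+V(x-w)-2V(x)]$ with $w=\sigma_0z$. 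Split into two regions:
\begin{itemize}
\item for $|w|\le\frac12(1+|x|^2)^{1/2}$, use second-order Taylor with the decaying Hessian $\|\nabla^2V(\xi)\|\le C(1+|\xi|^2)^{\frac p2-1}$;
\item for $|w|>\frac12(1+|x|^2)^{1/2}$, use $V(x\pm w)\le C|w|^p$.
\end{itemize}
Together these show the second difference is at most $C|w|^p$, which is $\nu$-integrable on $\{|z|>1\}$; this is what \cite[Lemma A.1]{CXZZ26}, invoked in the paper, provides.

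Alternatively, keep the weaker bound you first wrote down, $\be[V(u+\sigma_0\Delta^sL_k)]\le V(u)+C\Delta(1+V(u))^{(p-1)/p}+C\Delta$. It does follow from the $C(1+|x|^{p-1})$ generator bound together with $\be|u+\sigma_0L_r|^{p-1}\le|u|^{p-1}+Cr^{(p-1)/\alpha}$. Then absorb the middle term by Young's inequality, $C\Delta V(u)^{(p-1)/p}\le\frac{p\lambda}{4}\Delta V(u)+C'\Delta$. Since $(1+\frac{p\lambda}{4}\Delta)(1-\frac{3p\lambda}{4}\Delta)\le1-\frac{p\lambda}{2}\Delta$, your ``room to spare'' is exactly what this absorption uses up. In either version, Dynkin's formula must be localized (exit times from balls plus Fatou's lemma), since $V$ is unbounded.
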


\begin{proof}
Fix $p\in[1,\alpha)$ and set $\Delta^*:=\min\bigl\{2^{-1},(p\lambda)^{-1},(4^{-1}M^{-2}\lambda)^3\bigr\}$. Then $\Delta^*\in(0,1)$, and for every $\Delta\in(0,\Delta^*]$, $\bar\gamma:=1-2^{-1}p\lambda\Delta\in[1/2,1)$ and $2M^2\Delta^{1/3}\leq 2^{-1}\lambda$. Recall that $\hat X_0=\pi_\Delta(x)$, and hence $|\hat{X}_k|\leq R_\Delta$ for each $k\in\bn_0$.

Fix $s\in\br$, $\Delta\in(0,\Delta^*]$ and $k\in\bn_0$. For a deterministic value $y\in\bar{B}(0,R_\Delta)$, define
\[G_k(y):=\be[V(\pi_\Delta(y+\Delta b^s(k\Delta,y)+\sigma^s(k\Delta,y)\Delta^s L_\Delta))].\]
We will prove that
\[G_k(y)\leq \bar{\gamma}V(y)+C_0\Delta~~\text{for every}~y\in\bar{B}(0,R_\Delta).\]
Since $|y|\leq R_\Delta$, the definition of $\vp$ and the choice of $M$ give
    \begin{equation*}
        |b^s(k\Delta,y)|\leq|b^s(k\Delta,0)|+\vp(R_\Delta)|y|\leq M+M\Delta^{-\frac{1}{3}}|y|,
    \end{equation*}
For every $r\in[0,\Delta]$, it follows that
    \[r|b^s(k\Delta,y)|^2\leq 2M^2\Delta+2M^2\Delta^\frac{1}{3}|y|^2\leq 2M^2\Delta+\frac{\lambda}{2}|y|^2.\]
    By the weakly dissipative assumption, we have
    \[\langle y, b^s(k\Delta,y)\rangle+r|b^s(k\Delta,y)|^2\leq -\frac{\lambda}{2}|y|^2+L+2M^2\Delta.\]
    Moreover,
    \begin{align*}
        1+|y+rb^s(k\Delta,y)|^2&=1+|y|^2+2r\langle y,b^s(k\Delta,y)\rangle+r^2|b^s(k\Delta,y)|^2\\
        &\leq 1+|y|^2-\frac{3\lambda}{2}r|y|^2+2L r+2M^2r\Delta\\
        &\leq 1+|y|^2+2L\Delta+2M^2\Delta^2\\
        &=:D_\Delta(y).
    \end{align*}
    The fundamental theorem of calculus yields
    \begin{align*}
        V(y+\Delta b^s(k\Delta,y))&=V(y)+p\int_0^{\Delta}\frac{\langle y,b^s(k\Delta,y)\rangle+r|b^s(k\Delta,y)|^2}{(1+|y+rb^s(k\Delta,y)|^2)^{1-\frac{p}{2}}}\d r\\
        &\leq V(y)-\frac{p\lambda}{2}\Delta |y|^2D_\Delta(y)^{\frac{p}{2}-1}+p(L+2M^2\Delta)\Delta,
    \end{align*}
    where we used $1-2^{-1}p>0$ and $1\leq 1+|y+rb^s(k\Delta,y)|^2\leq D_\Delta(y)$.
    Since $|y|^2=D_\Delta(y)-1-2L\Delta-2M^2\Delta^2$, it follows that
    \begin{align*}
        V(y+\Delta b^s(k\Delta,y))&\leq V(y)-\frac{p\lambda}{2}\Delta D_\Delta(y)^\frac{p}{2}+p(L+2M^2\Delta)\Delta\\
        &~~~+\frac{p\lambda}{2}\Delta(1+2L\Delta+2M^2\Delta^2)D_\Delta(y)^{\frac{p}{2}-1}\\
        &\leq \left(1-\frac{p\lambda}{2}\Delta\right)V(y)+C_{p,\lambda,L,M}\Delta,
    \end{align*}
    where
    $C_{p,\lambda,L,M}:=2^{-1}p\lambda(1+2L+2M^2)+p(L+2M^2)$. In the last step, we used $D_\Delta(y)\geq 1+|y|^2$, $D_\Delta(y)^{\frac{p}{2}-1}\leq 1$ and $\Delta<1$. Next, set $A:=\sigma^s(k\Delta,y)$ and consider the auxiliary process
\begin{equation*}
    Y_t^{k,y}:=y+b^s(k\Delta,y)\Delta+A\tilde{L}_t^s,~~t\geq 0.
\end{equation*}
Let $\mathscr{L}_A$ denote the generator with the fixed matrix $A$, namely,
\[\mathscr{L}_Af(w):=\int_{\br^d}[f(w+Az)-f(w)-\bone_{\{|z|\leq 1\}}\langle\nabla f(w),Az\rangle]\nu(\d z).\]
By \cite[Lemma A.1]{CXZZ26} and the uniform upper bound in \eqref{eq:numerical ellipticity},
\[\mathscr{L}_A V(w)\leq C_{p,\alpha,d}\Vert A\Vert^\alpha(1+|w|^2)^{\frac{p}{2}-\frac{\alpha}{2}}\leq C_{p,\alpha,d,\Lambda},~~w\in\br^d.\]
To justify the use of It\^o's formula, set
\[\varsigma_m^y:=\inf\{t\geq 0:|Y_t^{k,y}|\geq m\},~~m>|y+b^s(k\Delta,y)\Delta|.\]
For any $0\leq t\leq \Delta$, it follows that
\begin{align*}
    \be[V(Y_{t\wedge\varsigma_m^y}^{k,y})]&=V(Y_0^{k,y})+\be\left[\int_0^{t\wedge\varsigma_m^y}\mathscr{L}_AV(Y_{r-}^{k,y})\d r\right]\leq V(Y_0^{k,y})+C_{p,\alpha,d,\Lambda} t.
\end{align*}
Since $Y^{k,y}$ has c\`adl\`ag paths, they are bounded on every compact time interval, so $\varsigma_m^y\to\infty$ almost surely as $m\to\infty$. Fatou's lemma gives
\[\be[V(Y_\Delta^{k,y})]\leq V(y+b^s(k\Delta,y)\Delta)+C_{p,\alpha,d,\Lambda}\Delta.\]
Using $V(\pi_\Delta(w))\leq V(w)$, we conclude that
\begin{equation*}
    G_k(y)=\be[V(\pi_\Delta(Y_\Delta^{k,y}))]\leq \bar{\gamma}V(y)+C_0\Delta,
\end{equation*}
where $C_0=C_{p,\lambda,L,M}+C_{p,\alpha,d,\Lambda}$.

For every $k\in\mathbb N_0$, the increment $\Delta^sL_k$ is independent of $\cf_k$ and has the same distribution as $\Delta^s L_\Delta$. Since $\hat{X}_k$ is $\cf_k$-measurable and takes values in $\bar{B}(0,R_\Delta)$, we have
\[\be[V(\hat{X}_{k+1})|\cf_k]=G_k(\hat{X}_k)\leq \bar{\gamma}V(\hat{X}_k)+C_0\Delta.\]
This estimate also holds for $k=0$, because $\hat{X}_0=\pi_\Delta(x)$ for every $x\in\br^d$. Taking expectations and iterating yields, for every $n\in\mathbb N_0$,
\begin{align*}
    \be[V(\hat{X}_n)]&\leq \bar{\gamma}^nV(\pi_\Delta(x))+C_0\Delta\sum_{i=0}^{n-1}\bar{\gamma}^i=\bar{\gamma}^nV(\pi_\Delta(x))+\frac{2C_0}{p\lambda}(1-\bar{\gamma}^n)\leq \bar{\gamma}^nV(x)+\frac{2C_0}{p\lambda},
\end{align*}
where the sum is understood to be zero when $n=0$. Taking $\hat{\varpi}=-\frac{\ln(1-2^{-1}p\lambda\Delta)}{\Delta}>0$, we have
\begin{equation*}
    \be[V(\hat{X}_n)]\leq \re^{-\hat{\varpi} n\Delta}V(x)+\frac{2C_0}{p\lambda}.
\end{equation*}
The constants $C_0$ and $\Delta^*$ are independent of $s,x,k$, and $\Delta$.
\end{proof}

\begin{remark}
    The constant $\hat{\varpi}$ given in Lemma \ref{lem:Lyapunov condition for TEM} converges to  $2^{-1}p\lambda$ as $\Delta$ tends to $0$.
\end{remark}

\begin{lemma}\label{lem:local Doeblin condition for TEM}
Suppose that Assumption \ref{cond:one-side Lip and initial value} holds. Let $C_0$ be the constant in Lemma \ref{lem:Lyapunov condition for TEM}. Fix $\hat{\mathcal{R}}>4C_0p^{-1}\lambda^{-1}$, independent of $s$, $i$ and $\Delta$. Set $V(x):=(1+|x|^2)^\frac{p}{2}$ and $\cc:=\{x\in\br^d:V(x)\leq \hat{\mathcal{R}}\}$. Then there exist constants $\hat{\eta}\in(0,1)$ and  $\Delta^*\in(0,1)$, and a probability measure $\hat{\vp}$ on $\br^d$ such that for every $\Delta\in(0,\Delta^*)$, $s\in\br$, $i\in\bn_0$ and $A\in\cb(\br^d)$,
\begin{align}\label{eq:d local Doeblin}
    \inf_{x\in \hat{\cc}}\hat{P}(s+i\Delta,x,s+(i+1)\Delta,A)\geq\hat{\eta}\Delta\hat{\vp}(A),
\end{align}
The constants and the measure are independent of $s$, $i$ and $\Delta$.
\end{lemma}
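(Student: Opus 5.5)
The plan is to write the one-step kernel explicitly and bound the density of the $\alpha$-stable increment from below. For $x\in\hat{\cc}$ put $y:=\pi_\Delta(x)$. Since $\hat{\cc}$ is a fixed compact set, $|x|\leq r_0:=(\hat{\mathcal R}^{2/p}-1)^{1/2}$. Take $\Delta^*$ small enough that $R_\Delta\geq r_0+2$ for $\Delta<\Delta^*$; this is possible because $R_\Delta=\vp^{-1}(M\Delta^{-1/3})\to\infty$. Then $y=x$, and the one-step transition is the law of
\[\pi_\Delta\bigl(m+A\,\Delta^sL_i\bigr),\qquad m:=x+b^s(i\Delta,x)\Delta,\quad A:=\sigma^s(i\Delta,x).\]
On $\bar B(0,r_0)$ the drift is bounded by $C_b(1+r_0^l)$, so $|m|\leq r_0+1$ whenever $\Delta^*$ is small. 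Let $U:=B(0,1)$ and define the reference measure $\hat\vp(A):=\mathrm{Leb}(A\cap U)/\mathrm{Leb}(U)$. Because $U\subset\bar B(0,R_\Delta)$, the projection acts as the identity there, so
\[\hat P(s+i\Delta,x,s+(i+1)\Delta,A)\geq \bp\bigl(m+A\Delta^sL_i\in A\cap U\bigr).\]

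Next I use scaling and the heat kernel. Write $\Delta^sL_i\overset{d}{=}\Delta^{1/\alpha}L_1$ and let $g$ be the density of $L_1$. Rotational invariance gives the standard two-sided bound $g(z)\geq c_{\alpha,d}(1+|z|)^{-d-\alpha}$. The density of $m+A\Delta^{1/\alpha}L_1$ at $w$ is
\[|\det A|^{-1}\Delta^{-d/\alpha}\,g\bigl(\Delta^{-1/\alpha}A^{-1}(w-m)\bigr).\]
By \eqref{eq:numerical ellipticity}, $|\det A|^{-1}\geq\Lambda^{-d/2}$ and $|A^{-1}v|\leq\Lambda^{1/2}|v|$. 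For $w\in U$ we have $|w-m|\leq r_0+2$, so the density is bounded below by
\[\Lambda^{-d/2}c_{\alpha,d}\,\Delta^{-d/\alpha}\bigl(1+\Lambda^{1/2}(r_0+2)\Delta^{-1/\alpha}\bigr)^{-d-\alpha}\geq c\,\Delta^{-d/\alpha}\Delta^{(d+\alpha)/\alpha}=c\,\Delta .\]
Here we used $\Delta<1$, and $c$ depends only on $\alpha,d,\Lambda,r_0$. This lower bound is the polynomial-tail (large-jump) regime of the stable density, and it produces exactly the factor $\Delta$ in \eqref{eq:d local Doeblin}. Integrating over $A\cap U$ gives the claim with $\hat\eta:=\min\{\tfrac12,\,c\,\mathrm{Leb}(U)\}$. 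The constant $\hat\eta$ is independent of $s$, $i$ and $\Delta$, because all the bounds above are uniform in time.

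The main obstacle is justifying the uniform lower bound $g(z)\gtrsim(1+|z|)^{-d-\alpha}$ for the rotationally invariant stable density. I would quote the standard result of Blumenthal--Getoor, or the case $b=0$, $\sigma=I$ of Lemma \ref{lem:two-side estimate and gradient estimate}, together with scaling. The remaining steps only require $\Delta^*$ small enough that the drift displacement and the truncation do not interfere on $\hat{\cc}$.
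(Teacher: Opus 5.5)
Your proposal is correct and follows essentially the same route as the paper: restrict to $B(0,1)\subset\bar B(0,R_\Delta)$, where the projection is inactive, bound the density of $\pi_\Delta(x)+\Delta b+\sigma\,\Delta^sL_i$ from below there by the polynomial-tail stable heat kernel estimate (which produces the factor $\Delta$), and take $\hat\vp$ uniform on $B(0,1)$. The differences are cosmetic: you shrink $\Delta^*$ so that $\pi_\Delta(x)=x$ on $\hat{\cc}$, whereas the paper works with $\pi_\Delta(x)$ directly using $|\pi_\Delta(x)|\le|x|$, and the paper quotes the two-sided bound for $p_\alpha(t,w)$ from \cite{CXZZ26} instead of scaling from $t=1$.
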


\begin{proof}
Choose $\Delta^*\in(0,1)$ no larger than the step-size threshold in Lemma \ref{lem:Lyapunov condition for TEM}. Fix $\Delta\in(0,\Delta^*)$, $s\in\mathbb R$ and $i\in\mathbb N_0$. For each $x\in\br^d$, set $A_x:=\sigma^s(i\Delta,{\pi_\Delta(x)})$ and $Y_x:={\pi_\Delta(x)}+\Delta b^s(i\Delta,{\pi_\Delta(x)})+A_xL_\Delta$.
Since $\Delta^s L_i$ has the same distribution as $L_\Delta$, the definition of the numerical scheme gives
\[\hat{P}(s+i\Delta,x,s+(i+1)\Delta,A)=\bp(\pi_{\Delta}(Y_x)\in A),~~A\in\cb(\br^d).\]
Let $p_\alpha(t,w)$ denote the density of $L_t$. It follows from \cite[(A.9)]{CXZZ26} that there exists a constant $C_{\alpha,d}\geq 1$ such that
\begin{align*}
    \frac{C_{\alpha,d}^{-1}t}{(t^\frac{1}{\alpha}+|w|)^{d+\alpha}}\leq p_\alpha(t,w)\leq\frac{C_{\alpha,d}t}{(t^\frac{1}{\alpha}+|w|)^{d+\alpha}},~~t>0,~w\in\br^d.
\end{align*}
The uniform ellipticity assumption implies that $A_x$ is invertible. A change of variables then shows that $Y_x$ has density
\[q_{s,i,\Delta,x}(y):=|\det(A_x^{-1})|p_\alpha(\Delta,A_x^{-1}(y-{\pi_\Delta(x)}-\Delta b^s(i\Delta,{\pi_\Delta(x)}))),~~y\in\br^d.\]
For each $s\in\br$, $i\in\bn_0$, $x\in\br^d$ and every Borel set $D\in \cb(B(0,R_\Delta))$, we have
\begin{align*}
    \hat{P}(s+i\Delta,x,s+(i+1)\Delta,D)&=\bp(\hat{X}_{s+(i+1)\Delta}^{s+i\Delta,{\pi_\Delta(x)}}\in D)=\bp(\pi_{\Delta}(\tilde{X}_{s+(i+1)\Delta}^{s+i\Delta,x})\in D).
\end{align*}
Note that
\[\{\omega: \tilde{X}_{s+(i+1)\Delta}^{s+i\Delta,x}\in D\}\subseteq\{\omega: \pi_\Delta(\tilde{X}_{s+(i+1)\Delta}^{s+i\Delta,x})\in D\}\]
and
\[\{\omega: \tilde{X}_{s+(i+1)\Delta}^{s+i\Delta,x}\notin D\}\cap\{\omega: \pi_\Delta(\tilde{X}_{s+(i+1)\Delta}^{s+i\Delta,x})\in D\}=\emptyset.\]
It follows that
\[\bp(\pi_{\Delta}(\Tilde{X}_{s+(i+1)\Delta}^{s+i\Delta,x})\in D)=\bp(\Tilde{X}_{s+(i+1)\Delta}^{s+i\Delta,x}\in D).\]
Therefore, we derive that
\begin{align*}
    \hat{P}(s+i\Delta,x,s+(i+1)\Delta,D)&=\bp(Y_x\in D)=\int_Dq_{s,i,\Delta,x}(y)\d y.
\end{align*}
Thus, the restriction of the transition probability measure $\hat P(s+i\Delta,x,s+(i+1)\Delta,\cdot)$ to $B(0,R_\Delta)$ admits the density $q_{s,i,\Delta,x}$ with respect to Lebesgue measure. Since $M\geq \vp(1)$ and $\Delta<1$, we have $R_\Delta>1$ and $B(0,1)\subset B(0,R_\Delta)$. It follows that for every $A\in\cb(\br^d)$,
\begin{align*}
    \hat{P}(s+i\Delta,x,s+(i+1)\Delta,A)&\geq \hat{P}(s+i\Delta,x,s+(i+1)\Delta, A\cap B(0,1))\\
    &=\int_{A\cap B(0,1)}q_{s,i,\Delta,x}(y)\d y.
\end{align*}
For every $x\in\hat{\cc}$, the inequality $V(\pi_\Delta(x))\leq V(x)$ implies $\pi_\Delta(x)\in\hat{\cc}$ and $|\pi_\Delta(x)|\leq|x|\leq\hat{\mathcal{R}}$. For every $t\in\br$ and $x\in\hat{\cc}$, we have
\[|b(t,\pi_\Delta(x))|\leq M+\ell(1+|\pi_\Delta(x)|^q)|\pi_\Delta(x)|\leq M+\ell(1+|\hat{\mathcal{R}}|^q)\hat{\mathcal{R}}=:B_*.\]
Consequently, we have $\sup_{t\in\br,x\in\hat{\cc}}|b(t,\pi_\Delta(x))|\leq B_*$. The uniform ellipticity assumption gives $|\det(A_x^{-1})|
\geq\Lambda^{-\frac{d}{2}}$ and $\Vert A_x^{-1}\Vert\leq\sqrt{d\Lambda}$. Hence, for every $x\in\hat{\cc}$ and $y\in B(0,1)$, it follows that
\begin{align*}
    |A_x^{-1}(y-\pi_\Delta(x)-b^s(i\Delta,\pi_\Delta(x))\Delta)|&\leq \sqrt{d\Lambda}(|y|+|\pi_\Delta(x)|+\Delta|b^s(i\Delta,\pi_\Delta(x))|)\\
    &\leq \sqrt{d\Lambda}(1+\hat{\mathcal{R}}+B_*)=:C_*.
\end{align*}
The constants $B_*$ and $C_*$ are independent of $s,i,\Delta,x$ and $y$ in the ranges specified above. Applying the lower bound for $p_\alpha$ yields
\begin{align*}
    q_{s,i,\Delta,x}(y)&\geq \frac{C_{\alpha,d}^{-1}\Lambda^{-\frac{d}{2}}\Delta}{(\Delta^\frac{1}{\alpha}+C_*)^{d+\alpha}}\geq \frac{C_{\alpha,d}^{-1}\Lambda^{-\frac{d}{2}}}{(1+C_*)^{d+\alpha}}\Delta=:c_0\Delta,~~x\in\hat{\cc},~y\in B(0,1),
\end{align*}
where $c_0>0$ is independent of $s,i,\Delta,x$ and $y$. Define
\[\hat{\vp}(A):=\frac{\mbox{Leb}(A\cap B(0,1))}{\mbox{Leb}(B(0,1))},~~A\in\cb(\br^d),\]
and set
\[\hat{\eta}:=\frac{1}{2}\min\{1,c_0\mbox{Leb}(B(0,1))\}.\]
Then $\hat{\vp}$ is a probability measure and $\hat{\eta}\in(0,1)$, and both are independent of $s,i$ and $\Delta$. Therefore, for every $A\in\cb(\br^d),x\in\hat{\cc},s\in\br$, $i\in\bn_0$ and all $\Delta\in(0,\Delta^*)$, we deduce that 
\[\hat{P}(s+i\Delta,x,s+(i+1)\Delta,A)\geq \int_{A\cap B(0,1)}q_{s,i,\Delta,x}(y)\d y\geq c_0\Delta\mbox{Leb}(A\cap B(0,1))\geq \hat{\eta}\Delta\hat{\vp}(A).\]
Taking the infimum over $x\in\hat{\cc}$ completes the proof.
\end{proof}

\begin{definition}
    A measure-valued function $\hat{\mu}_{s+\cdot\Delta}:\bz\to\cp(\br^d)$ is called a $N$-periodic measure of the discrete semigroup $\hat{P}(s+i\Delta,s+j\Delta)$ if 
    \[\hat{P}^*(s+i\Delta,s+j\Delta)\hat{\mu}_{s+i\Delta}(A):=\int_{\br^d}\hat{P}(s+i\Delta,x,s+j\Delta,A)\hat{\mu}_{s+i\Delta}(\d x)=\hat{\mu}_{s+j\Delta}(A),\]
    and
    \[\hat{\mu}_{s+i\Delta+N\Delta}=\hat{\mu}_{s+i\Delta},\]
    for every $i,j\in\bz$ with $i\leq j$.
\end{definition}

\begin{theorem}\label{thm:derect pm}
    Suppose that Assumptions \ref{cond:one-side Lip and initial value} and \ref{A:periodic} hold. Then, for every $p\in[1,\alpha)$, there exist constants $\Delta^*\in(0,1)$, $\hat{\gamma}>0$ and $C_\Xi>0$ such that, for every $N\in\mathbb N$ satisfying $\Delta:=\frac{\tau}{N}\in(0,\Delta^*]$ and every $s\in\mathbb R$, the truncated Euler--Maruyama scheme \eqref{eq:TEM} admits a unique $N$-periodic measure
    \[\hat{\mu}_{s+\cdot\Delta}:\bz\longrightarrow\cp_V(\br^d),~~\sup_{i\in\bz}\hat{\mu}_{s+i\Delta}(V)<\infty,\]
    where $V(x)=(1+|x|^2)^\frac{p}{2}$. For each fixed $\Delta$, this periodic measure is independent of the initial value $x$. Moreover, for every $k\in\bn_0$, $i\in\bz$ and $x\in\br^d$,
    \[\rho_\beta(\hat{P}(s-k\tau+i\Delta,x,s+i\Delta,\cdot),\hat{\mu}_{s+i\Delta})\leq C_\Xi(1+|x|^p)\re^{-\hat{\gamma}k\tau}.\]
    The constants $\beta$, $\Delta^*$, $\hat\gamma$ and $C_\Xi$ are independent of $s,x,N,\Delta,i$ and $k$.
\end{theorem}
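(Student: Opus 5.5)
The plan is to run the Hairer--Mattingly argument for the one-step kernels of the chain. The periodic measure is then built from the period kernels $Q_{s+i\Delta}$ exactly as in the proof of Theorem \ref{thm:exist unique periodic measure}. The main point is that every constant must be uniform in $\Delta$.

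The direct route would be to check Assumption \ref{cond:Markov process} for $Q_{s+i\Delta}$ itself. The Lyapunov part is uniform: iterating \eqref{eq:EV(Yi+1) leq gamma V(Yi)+C1 Delta t} over $N=\tau/\Delta$ steps gives factor $\bar\gamma^N=(1-2^{-1}p\lambda\Delta)^{\tau/\Delta}\le \re^{-p\lambda\tau/2}$. The minorization part is not: Lemma \ref{lem:local Doeblin condition for TEM} only yields a one-step constant $\hat\eta\Delta$, which degenerates as $\Delta\to0$. This is the main obstacle. To avoid it, I will not build a period-level Doeblin condition. Instead I will apply \cite[Theorem 1.3]{HM11} to each one-step kernel $\hat P(s+i\Delta,s+(i+1)\Delta)$ with the following data:
\begin{itemize}
\item Lyapunov constants $\theta=\bar\gamma=1-2^{-1}p\lambda\Delta$ and $K=C_0\Delta$;
\item level set $\hat{\mathcal R}>4C_0/(p\lambda)=2K/(1-\theta)$, which is exactly the threshold required there;
\item Doeblin constant $\eta=\hat\eta\Delta$.
\end{itemize}
The weight becomes $\beta=\eta/(2K)=\hat\eta/(2C_0)$, independent of $\Delta$, so all steps use the same $\rho_\beta$. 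The contraction factor is the maximum of $1-\hat\eta\Delta/2$ and
\[
\frac{2+\beta(\bar\gamma\hat{\mathcal R}+2C_0\Delta)}{2+\beta\hat{\mathcal R}}=1-\frac{\beta\Delta(2^{-1}p\lambda\hat{\mathcal R}-2C_0)}{2+\beta\hat{\mathcal R}} .
\]
Both are of the form $1-c\Delta$ with $c>0$ independent of $s,i,\Delta$, using $\hat{\mathcal R}>4C_0/(p\lambda)$. One point needs care: the kernels are defined on all of $\br^d$ through $\pi_\Delta$. I will use $V(\pi_\Delta(x))\le V(x)$, which holds because $|\pi_\Delta(x)|\le|x|$, together with the fact that the Lyapunov and Doeblin bounds hold for every $x$. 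This makes the Hairer--Mattingly estimate valid for arbitrary initial measures.

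Composing the one-step contractions via \eqref{eq:C-K equation} then gives, for $m\ge1$ and any $\mu_1,\mu_2\in\cp(\br^d)$,
\[
\rho_\beta\bigl(\hat P^*(s+i\Delta,s+(i+m)\Delta)\mu_1,\hat P^*(s+i\Delta,s+(i+m)\Delta)\mu_2\bigr)\le(1-c\Delta)^m\rho_\beta(\mu_1,\mu_2).
\]
In particular, over $kN$ steps the factor is at most $\re^{-ck\tau}$, so $\hat\gamma:=c$ works. I will also record the moment bound $\sup_n\be[V(\hat X_n)]\le V(x)+2C_0/(p\lambda)$ from \eqref{eq:E|Y(s+i Delta t)| leq C(1+e-beta Delta t)}.

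Existence and uniqueness follow by repeating the proof of Theorem \ref{thm:exist unique periodic measure} with $Q_{s+i\Delta}^k$ in place of $P(s,s+k\tau)$.
\begin{enumerate}
\item \emph{Cauchy property.} For fixed $i$, the sequence $Q^k_{s+i\Delta}(\pi_\Delta(x),\cdot)$ is Cauchy in $\rho_\beta$, with bound $(2+2\beta V(x)+2\beta C_0/(p\lambda))\re^{-\hat\gamma n\tau}$ for indices $k\ge n$.
\item \emph{Limit.} Completeness in total variation, followed by truncation $V\wedge R$ and monotone convergence, gives a limit $\hat\mu_{s+i\Delta}\in\cp_V(\br^d)$. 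Its moments satisfy $\hat\mu_{s+i\Delta}(V)\le V(x)+2C_0/(p\lambda)$, uniformly in $i$, and the convergence holds in $\rho_\beta$.
\item \emph{Independence of $x$.} This follows from the contraction applied to $\delta_x,\delta_y$.
\item \emph{Periodicity and invariance.} Periodicity $\hat\mu_{s+i\Delta+N\Delta}=\hat\mu_{s+i\Delta}$ follows from \eqref{eq:P(i+tau,x,j+tau,A)}. The invariance $\hat P^*(s+i\Delta,s+j\Delta)\hat\mu_{s+i\Delta}=\hat\mu_{s+j\Delta}$ follows by passing to the limit in $\hat P^*(s+i\Delta+k\tau,s+j\Delta+k\tau)\hat P^*(s+i\Delta,s+i\Delta+k\tau)\delta_x$. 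The one-step contraction shows that $\hat P^*$ is $\rho_\beta$-continuous, which justifies this.
\item \emph{Uniqueness.} Within the class $\sup_i\hat\mu_{s+i\Delta}(V)<\infty$, uniqueness follows from $\rho_\beta(\hat\mu^1,\hat\mu^2)\le \re^{-\hat\gamma k\tau}\rho_\beta(\hat\mu^1,\hat\mu^2)$.
\end{enumerate}

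For the pullback estimate, periodicity gives $\hat P(s-k\tau+i\Delta,x,s+i\Delta,\cdot)=Q^k_{s+i\Delta}(\pi_\Delta(x),\cdot)$ and $\hat\mu_{s+i\Delta}=Q^k_{s+i\Delta}{}^*\hat\mu_{s+i\Delta}$. The contraction then gives
\[
\rho_\beta\bigl(\hat P(s-k\tau+i\Delta,x,s+i\Delta,\cdot),\hat\mu_{s+i\Delta}\bigr)\le \re^{-\hat\gamma k\tau}\rho_\beta\bigl(\delta_{\pi_\Delta(x)},\hat\mu_{s+i\Delta}\bigr).
\]
The right-hand factor satisfies $\rho_\beta(\delta_{\pi_\Delta(x)},\hat\mu_{s+i\Delta})\le 2+\beta V(x)+\beta\bigl(V(0)+2C_0/(p\lambda)\bigr)\le C_\Xi(1+|x|^p)$, using $V(x)\le 2^{p/2}(1+|x|^p)$. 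The constants $\beta,\hat\gamma,C_\Xi$ and $\Delta^*$, taken as the threshold of Lemmas \ref{lem:Lyapunov condition for TEM} and \ref{lem:local Doeblin condition for TEM}, are independent of $s,x,N,\Delta,i,k$.
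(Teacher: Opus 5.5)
Your proposal is correct and follows the paper's proof essentially step for step. You use the same one-step application of \cite[Theorem 1.3]{HM11} with $\theta=1-\frac{p\lambda}{2}\Delta$, $K=C_0\Delta$ and $\eta=\hat\eta\Delta$, which gives the $\Delta$-independent weight $\beta=\hat\eta/(2C_0)$ and a one-step factor $1-c\Delta$; you then compose over $kN$ steps and obtain the pullback bound through $\rho_\beta(\delta_{\pi_\Delta(x)},\hat\mu_{s+i\Delta})$. The only difference is cosmetic: the paper gets $\hat\mu_s$ from the Banach fixed-point theorem for $\hat P^*(s,s+\tau)$ on $(\cp_V(\br^d),\rho_\beta)$ and propagates it forward, whereas you rerun the Cauchy-sequence argument of Theorem \ref{thm:exist unique periodic measure}, which also supplies the completeness of $(\cp_V(\br^d),\rho_\beta)$ that the paper takes for granted.
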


\begin{proof}
Let $C_0$ be the constant in Lemma \ref{lem:Lyapunov condition for TEM}. Fix
\[\hat{\mathcal{R}}:=1+\frac{4C_0}{p\lambda}>\frac{4C_0}{p\lambda}\text{ and } \hat{\cc}:=\{x\in\br^d:V(x)\leq \hat{\mathcal{R}}\}.\]
Apply the construction in the proof of Lemma \ref{lem:local Doeblin condition for TEM} to this level set, and denote the resulting minorization constant and probability measure by $\hat{\eta}$ and $\hat{\vp}$, respectively. Choose a sufficiently small $\Delta^*$, fix $N\in\bn$ such that $\Delta:=\frac{\tau}{N}\in(0,\Delta^*]$. For each $s\in\br$ and $i\in\bn_0$, by Lemmas \ref{lem:Lyapunov condition for TEM} and \ref{lem:local Doeblin condition for TEM}, we have
\begin{equation}\label{eq:Lyp proof}
    \hat{P}(s+i\Delta,s+(i+1)\Delta)V(x)\leq \Bigl(1-\frac{p\lambda}{2}\Delta\Bigr)V(x)+C_0\Delta,~~x\in\br^d
\end{equation}
and
\[\inf_{x\in\hat{\cc}}\hat{P}(s+i\Delta,x,s+(i+1)\Delta,A)\geq\hat{\eta}\Delta \hat{\vp}(A),~~x\in\hat{\cc},~A\in\cb(\br^d).\]
Set $\beta:=\frac{\hat{\eta}\Delta}{2C_0\Delta}=\frac{\hat{\eta}}{2C_0}>0$, which is independent of $\Delta$. For each $i\in\bn_0$, applying the quantitative contraction estimate in \cite[Theorem 1.3]{HM11} gives
\begin{equation*}
    \rho_\beta(\hat{P}^*(s+i\Delta,s+(i+1)\Delta)\mu_1,\hat{P}^*(s+i\Delta,s+(i+1)\Delta)\mu_2)\leq \hat{\bar{\eta}}\rho_\beta(\mu_1,\mu_2),
\end{equation*}
where
\[\hat{\bar{\eta}}:=\max\left\{1-\frac{\hat{\eta}}{2}\Delta,1-\frac{\beta(p\lambda\hat{\mathcal{R}}-4C_0)}{2(2+\beta\hat{\mathcal{R}})}\Delta\right\}.\]
Set
\[\hat{\gamma}:=\min\left\{\frac{\hat{\eta}}{2},\frac{\beta(p\lambda\hat{\mathcal{R}}-4C_0)}{2(2+2\beta\hat{\mathcal{R}})}\right\}>0.\]
Then $\hat{\gamma}$ is independent of $\Delta$, and $\hat{\bar{\eta}}\leq 1-\hat{\gamma}\Delta\leq \re^{-\hat{\gamma}\Delta}\in(0,1)$. By the Chapman--Kolmogorov equation and iteration, we have
\begin{equation*}
    \rho_\beta(\hat{P}^*(s+i\Delta,s+j\Delta)\mu_1,\hat{P}^*(s+i\Delta,s+j\Delta)\mu_2)\leq \re^{-\hat{\gamma}(j-i)\Delta}\rho_\beta(\mu_1,\mu_2),~~0\leq i<j
\end{equation*}
for all $\mu_1,\mu_2\in\cp_V(\br^d)$. We now also verify this estimate when $i=j$. Since 
\[\hat{P}(s+i\Delta,x,s+i\Delta,\cdot)=\delta_{\pi_\Delta(x)}(\cdot),\]
we have $\hat{P}^*(s+i\Delta,s+i\Delta)\mu=(\pi_\Delta)_\#\mu$. The total variation of the signed measure satisfies
\[|(\pi_\Delta)_\#(\mu_1-\mu_2)|\leq (\pi_\Delta)_\#|\mu_1-\mu_2|.\]
Therefore, using $V(\pi_\Delta(x))\leq V(x)$, we have
\begin{align*}
    \rho_\beta(\hat{P}^*(s+i\Delta,s+i\Delta)\mu_1,\hat{P}^*(s+i\Delta,s+i\Delta)\mu_2)&\leq\int_{\br^d}(1+\beta V(\pi_\Delta(x)))|\mu_1-\mu_2|(\d x)\\
    &\leq \rho_\beta(\mu_1,\mu_2).
\end{align*}
Thus for all $i,j\in\bn_0$ with $i\leq j$, we conclude that
\begin{equation}\label{eq:rho(i,j) i,j geq0}
    \rho_\beta(\hat{P}^*(s+i\Delta,s+j\Delta)\mu_1,\hat{P}^*(s+i\Delta,s+j\Delta)\mu_2)\leq \re^{-\hat{\gamma}(j-i)\Delta}\rho_\beta(\mu_1,\mu_2).
\end{equation}
We next establish the corresponding estimate for arbitrary integer indices by using periodicity. The periodicity of $b$ and $\sigma$, the stationary increments of the $\alpha$-stable process, and the numerical recursion imply that
\begin{equation}\label{eq:kernel periodicity proof}
    \hat{P}(s+(i+mN)\Delta,x,s+(j+mN)\Delta,A)=\hat{P}(s+i\Delta,x,s+j\Delta,A)
\end{equation}
for all $i,j,m\in\bz$ with $i\leq j$, $x\in\br^d$ and $A\in\cb(\br^d)$. Fix any $i,j\in\bz$ with $i\leq j$. Choose $m\in\bn_0$ such that $i+mN\geq0$, then $0\leq i+mN\leq j+mN$. Integrating \eqref{eq:kernel periodicity proof} against an initial measure gives
\[\hat{P}^*(s+i\Delta,s+j\Delta)=\hat{P}^*(s+(i+mN)\Delta,s+(j+mN)\Delta).\]
We apply \eqref{eq:rho(i,j) i,j geq0} with the nonnegative
indices $i+mN$ and $j+mN$ to obtain
\begin{equation}\label{eq:rho(i,j)}
\begin{aligned}
    &\rho_\beta(\hat{P}^*(s+i\Delta,s+j\Delta)\mu_1,\hat{P}^*(s+i\Delta,s+j\Delta)\mu_2)\\
    =&~\rho_\beta(\hat{P}^*(s+(i+mN)\Delta,s+(j+mN)\Delta)\mu_1,\hat{P}^*(s+(i+mN)\Delta,s+(j+mN)\Delta)\mu_2)\\
    \leq&~\re^{-\hat{\gamma}((j+mN)-(i+mN))\Delta}\rho_\beta(\mu_1,\mu_2)\\
    =&~\re^{-\hat{\gamma}(j-i)\Delta}\rho_\beta(\mu_1,\mu_2).
\end{aligned}
\end{equation}
Thus the contraction estimate holds for all integers $i\leq j$.

We next construct the periodic measure. It is not difficult to see that the metric space $(\cp_V(\br^d),\rho_\beta)$ is complete. By \eqref{eq:Lyp proof}, $\hat{P}^*(s,s+\tau)$ maps $\cp_V(\br^d)$ into itself. By \eqref{eq:rho(i,j)}, it is a strict contraction with factor at most $\re^{-\hat{\gamma}\tau}$. The Banach fixed-point theorem gives a unique $\hat{\mu}_s\in\cp_V(\br^d)$ such that
\[\hat{P}^*(s,s+N\Delta)\hat{\mu}_s=\hat{\mu}_s.\]
Since every transition probability is supported in $\bar{B}(0,R_\Delta)$, we obtain $\hat{\mu}_s(\bar{B}(0,R_\Delta))=1$. Moreover, we have $\hat{P}^*(s,s)\hat{\mu}_s=(\pi_\Delta)_\#\hat{\mu}_s=\hat{\mu}_s$. We therefore define
\[\hat{\mu}_{s+i\Delta}:=\hat{P}^*(s,s+i\Delta)\hat{\mu}_s,~~i\in\bn_0.\]
The Chapman--Kolmogorov equation gives
\[\hat{P}^*(s+i\Delta,s+j\Delta)\hat{\mu}_{s+i\Delta}=\hat{\mu}_{s+j\Delta},~~0\leq i\leq j.\]
Furthermore, for $i\in\bn_0$, it follows from \eqref{eq:kernel periodicity proof} that
\begin{align*}
    \hat{\mu}_{s+(i+N)\Delta}&=
    \hat P^*(s+N\Delta,s+(i+N)\Delta)
    \hat P^*(s,s+N\Delta)\hat\mu_s\\
    &=\hat P^*(s+N\Delta,s+(i+N)\Delta)\hat\mu_s\\
    &=\hat P^*(s,s+i\Delta)\hat\mu_s=\hat\mu_{s+i\Delta}.
\end{align*}
For $i<0$, we can write uniquely $i=mN+r$ with $m\in\bz$ and $r\in\{0,\cdots,N-1\}$, and define
\[\hat{\mu}_{s+i\Delta}:=\hat{\mu}_{s+r\Delta}.\]
Together with the periodicity already proved for nonnegative indices, this definition gives
\[\hat{\mu}_{s+(i+N)\Delta}=\hat{\mu}_{s+i\Delta},~~i\in\bz.\]
For arbitrary integers $i\leq j$, choose $m\in\bn_0$ such that $i+mN\geq 0$. It follows that
\begin{align*}
    \hat{P}^*(s+i\Delta,s+j\Delta)\hat{\mu}_{s+i\Delta}&=\hat{P}^*(s+(i+mN)\Delta,s+(j+mN)\Delta)\hat{\mu}_{s+(i+mN)\Delta}\\
    &=\hat{\mu}_{s+(j+mN)\Delta}=\hat{\mu}_{s+j\Delta}.
\end{align*}
Thus $\hat{\mu}_{s+\cdot\Delta}$ is an $N$-periodic measure, and each of its components is supported on $\bar{B}(0,R_\Delta)$.

This periodic measure is unique. Indeed, let $\hat{\nu}_{s+\cdot\Delta}$ be any other $N$-periodic measure. For every $i\in\bz$, we have
\[\hat{\nu}_{s+i\Delta}=\hat{P}^*(s+(i-1)\Delta,s+i\Delta)\hat{\nu}_{s+(i-1)\Delta}.\]
Hence, for every $i\in\bz$, $\hat{\nu}_{s+i\Delta}$ is supported on $\bar{B}(0,R_\Delta)$ and belongs $\cp_V(\br^d)$. Periodicity gives
\[\hat{P}^*(s,s+N\Delta)\hat{\nu}_s=\hat{\nu}_{s+N\Delta}=\hat{\nu}_s.\]
The uniqueness of the fixed point implies $\hat{\nu}_s=\hat{\mu}_s$, and hence $\hat{\nu}_{s+i\Delta}=\hat{\mu}_{s+i\Delta}$ for every $i\in\mathbb Z$.

We now verify that this periodic measure is independent of
the initial value $x$. Fix $i\in\bz$ and define the period
operator by
\[\mathcal{T}_{s+i\Delta}:=\hat P^*(s+i\Delta,s+i\Delta+\tau).\]
The operator $\mathcal T_{s+i\Delta}$ does not involve a choice of $x$. By \eqref{eq:rho(i,j)}, we have
\begin{equation}\label{eq:period contraction independence}
    \rho_\beta(\mathcal{T}_{s+i\Delta}\mu_1,\mathcal{T}_{s+i\Delta}\mu_2)\leq\re^{-\hat{\gamma}\tau}\rho_\beta(\mu_1,\mu_2).
\end{equation}
The evolution relation and periodicity imply
\[\mathcal{T}_{s+i\Delta}\hat{\mu}_{s+i\Delta}
    =\hat{\mu}_{s+i\Delta+\tau}=\hat{\mu}_{s+i\Delta}.\]
If $\hat{\nu}\in\mathcal P_V(\mathbb R^d)$ is another fixed point, then \eqref{eq:period contraction independence} gives
\[\rho_\beta(\hat{\nu},\hat\mu_{s+i\Delta})\leq\re^{-\hat\gamma\tau}\rho_\beta(\hat{\nu},\hat\mu_{s+i\Delta}).\]
The distance is finite and $\mathrm e^{-\hat\gamma\tau}<1$, so
$\hat{\nu}=\hat{\mu}_{s+i\Delta}$. For any $x\in\mathbb R^d$, the initial law of the projected chain is $\delta_{\pi_\Delta(x)}$. By periodicity of the transition kernels and the Chapman--Kolmogorov equation, we have
\begin{equation}\label{eq:period iterates independence}
    \hat{P}(s+i\Delta,x,s+i\Delta+n\tau,\cdot)=\mathcal{T}_{s+i\Delta}^n\delta_{\pi_\Delta(x)},~~n\in\bn_0,
\end{equation}
where $\mathcal T_{s+i\Delta}^{\,0}=I$. Therefore, we conclude that
\begin{align*}
    \rho_\beta(\hat{P}(s+i\Delta,x,s+i\Delta+n\tau,\cdot),\hat{\mu}_{s+i\Delta})&\leq\re^{-\hat{\gamma} n\tau}\rho_\beta(\delta_{\pi_\Delta(x)},\hat\mu_{s+i\Delta})\\
    &\leq\re^{-\hat{\gamma}n\tau}(2+\beta V(\pi_\Delta(x))+\beta\hat{\mu}_{s+i\Delta}(V))\to0.
\end{align*}
Thus every initial value $x$ gives the same limit $\hat\mu_{s+i\Delta}$. More explicitly, for any two initial values $x,y\in\br^d$,
\begin{align*}
    \rho_\beta(\mathcal T_{s+i\Delta}^{\,n}\delta_{\pi_\Delta(x)},\mathcal{T}_{s+i\Delta}^{\,n}\delta_{\pi_\Delta(y)})\leq\re^{-\hat{\gamma}n\tau}\rho_\beta(\delta_{\pi_\Delta(x)},\delta_{\pi_\Delta(y)})\to 0.
\end{align*}
Finally, it follows from \eqref{eq:kernel periodicity proof} that
\[\hat{P}(s+i\Delta-n\tau,x,s+i\Delta,\cdot)=\hat{P}(s+i\Delta,x,s+i\Delta+n\tau,\cdot).\]
Hence the pullback limit is the same measure $\hat\mu_{s+i\Delta}$ for every $x$. Since $i\in\mathbb Z$ is arbitrary, the periodic measure is independent of $x$.

We establish a uniform moment estimate. Iterating \eqref{eq:Lyp proof} yields for $0\leq i\leq j$,
\[\hat P(s+i\Delta,s+j\Delta)V(x)\leq \left(1-\frac{p\lambda}{2}\Delta\right)^{j-i}V(x)+\frac{2C_0}{p\lambda}\left(1-\left(1-\frac{p\lambda}{2}\Delta\right)^{j-i}\right).\]
Then it follows that
\[\hat{\mu}_s(V)\leq \left(1-\frac{p\lambda}{2}\Delta\right)^N\hat{\mu}_s(V)+\frac{2C_0}{p\lambda}\left(1-\left(1-\frac{p\lambda}{2}\Delta\right)^N\right).\]
Since $\hat{\mu}(V)<\infty$ and $(1-2^{-1}p\lambda\Delta)^N<1$, we obtain $\hat{\mu}_s(V)\leq 2p^{-1}\lambda^{-1}C_0$. Thus, for every $i\in\bn_0$,
\[\hat{\mu}_{s+i\Delta}(V)\leq \left(1-\frac{p\lambda}{2}\Delta\right)^i\hat{\mu}_s(V)+\frac{2C_0}{p\lambda}\left(1-\left(1-\frac{p\lambda}{2}\Delta\right)^i\right)\leq\frac{2C_0}{p\lambda}.\]
For each $i<0$, the same bound follows from $\hat{\mu}_{s+i\Delta}=\hat{\mu}_{s+r\Delta}$ with $r\in\{0,\cdots,N-1\}$. Therefore,
\begin{equation}\label{eq:mu(V) proof}
\sup_{i\in\bz}\hat{\mu}_{s+i\Delta}(V)\leq \frac{2C_0}{p\lambda},
\end{equation}
and the bound is uniform in $s,\Delta$ and $N$.

We now prove convergence in the weighted total variation distance $\rho_\beta$. For every $i,j\in\bz$ with $i\leq j$ and $x\in\br^d$, we have
\begin{align*}
    \rho_\beta(\hat{P}(s+i\Delta,x,s+j\Delta,\cdot),\hat\mu_{s+j\Delta})\leq\re^{-\hat{\gamma}(j-i)\Delta}\rho_\beta(\delta_{\pi_\Delta(x)},\hat\mu_{s+i\Delta}).
\end{align*}
Using \eqref{eq:mu(V) proof} and $V(\pi_\Delta(x))\leq V(x)\leq C(1+|x|^p)$, we have 
\begin{equation}\label{eq:forward rho beta}
        \rho_\beta(\hat{P}(s+i\Delta,x,s+j\Delta,\cdot),\hat{\mu}_{s+j\Delta})\leq C_\Xi(1+|x|^p)\re^{-\hat{\gamma}(j-i)\Delta}, 
\end{equation}
where $C_\Xi:=2+\beta+2p^{-1}\lambda^{-1}\beta C_0$. Finally, for every $i\in\mathbb Z$, $k\in\mathbb N_0$ and $x\in\mathbb R^d$, it follows from \eqref{eq:forward rho beta} that
\begin{align*}
    \rho_\beta(\hat P(s+i\Delta-k\tau,x,s+i\Delta,\cdot),\hat{\mu}_{s+i\Delta})&=\rho_\beta(\hat {P}(s+i\Delta,x,s+(i+kN)\Delta,\cdot),\hat{\mu}_{s+(i+kN)\Delta})\\
    &\leq C_\Xi(1+|x|^p)e^{-\hat{\gamma}k\tau}.
\end{align*}
This proves the stated pullback estimate. The constants $C_\Xi$ and $\hat{\gamma}$ are independent of $s$, $\Delta$, $N$, $i$, $k$, and $x$.
\end{proof}
    
\section{Convergence of solutions to SDEs over a finite time horizon}\label{sec:finite time}
In order to analyze the finite-time convergence rate, we
introduce a c\`adl\`ag interpolation of the numerical solution in
continuous time. For any $k\in\bn_0$ and $t\in[k\Delta,(k+1)\Delta)$, define
\begin{equation}\label{def:bar Xt}
    \bar{X}_t=\hat{X}_k+b^s(k\Delta,\hat{X}_k)(t-k\Delta)+\sigma^s(k\Delta,\hat{X}_k)(L_t^s-L_{k\Delta}^s).
\end{equation}
where $\hat{X}_0=\bar{X}_0=\pi_\Delta(x)$ and $\tilde{X}_0=X_0=x\in\br^d$. Since a L\'evy process has no jump at any fixed deterministic time almost surely, and the set of grid points is countable, we can work on a full-probability event on which $L_{s+k\Delta}=L_{s+k\Delta-}$ for any $k\geq 1$. At the grid point $t=k\Delta$, it follows from \eqref{def:bar Xt} and the definition of the numerical scheme that
\begin{equation}\label{eq:cadlag}
    \lim_{t\to k\Delta+}\bar{X}_t=\bar{X}_{k\Delta}=\hat{X}_k,~~\lim_{t\to k\Delta-}\bar{X}_t=\tilde{X}_k,~~k\in\bn
\end{equation}
Consequently, in addition to the jumps inherited from the driving $\alpha$-stable process, the interpolation may have an artificial
projection jump at each grid point $k\Delta$, given by
\[\Delta\bar{X}_{k\Delta}:=\bar{X}_{k\Delta}-\bar{X}_{k\Delta-}=\hat{X}_k-\tilde{X}_k,~~k\in\bn.\]
To incorporate these projection jumps into a global semimartingale representation, define
\begin{equation}\label{eq:J_t}
   J_t^\Delta:=\sum_{k=1}^{\lfloor t/\Delta\rfloor}(\hat{X}_k-\tilde{X}_k),~~t\geq 0,
\end{equation}
where the process $J^\Delta$ is an adapted and c\`adl\`ag, with piecewise constant paths of finite variation on every bounded time interval. Its jumps satisfy $\Delta J_{k\Delta}^\Delta=\hat{X}_k-\tilde{X}_k$. Then the interpolation admits the global representation
\begin{equation}\label{eq:bar X}
    \bar{X}_t=\pi_\Delta(x)+\int_0^tb^s(\hat{r},\hat{X}_r)\d r+\int_0^t\sigma^s(\hat{r},\hat{X}_{r-})\d L^s_r+{J_t^\Delta},
\end{equation}
where $\hat{X}_r=\hat{X}_k$ and $\hat{r}=k\Delta$ for $r\in[k\Delta,(k+1)\Delta)$. Here and below, the stochastic integrand is understood as its predictable step-process version.

\begin{lemma}\label{E[sup|bar X t|]}
    Suppose that Assumption \ref{cond:one-side Lip and initial value} holds. Let $\bar{X}$ be the interpolation defined in \eqref{def:bar Xt}. Then, for every $1\leq p<\alpha$, there exist constants $C_\Xi>0$ and $\Delta^*\in(0,1)$ such that for all $s\in\br$, $\Delta\in(0,\Delta^*]$ and $x\in\br^d$,
    \begin{equation*}
        \sup_{t\geq 0}\be[|\bar{X}_t|^p]\leq C_\Xi(1+|x|^p).
    \end{equation*}
\end{lemma}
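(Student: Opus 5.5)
The plan is to reduce the bound on the interpolation $\bar X_t$ to the grid estimate of Lemma \ref{lem:Lyapunov condition for TEM}, plus a one-step increment estimate. Fix $t\geq 0$ and let $k=\lfloor t/\Delta\rfloor$, so that $t\in[k\Delta,(k+1)\Delta)$. By \eqref{def:bar Xt},
\[
\bar X_t=\hat X_k+b^s(k\Delta,\hat X_k)(t-k\Delta)+\sigma^s(k\Delta,\hat X_k)(L^s_t-L^s_{k\Delta}).
\]
Here $\hat X_k$ is $\cf_k$-measurable, and the increment $L^s_t-L^s_{k\Delta}$ is independent of $\cf_k$ and has the law of $L_{t-k\Delta}$. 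The key point is that the time-$t$ value $\bar X_t$ has exactly the structure of the auxiliary process $Y^{k,y}$ from the proof of Lemma \ref{lem:Lyapunov condition for TEM}, but evaluated at the intermediate time $r=t-k\Delta\in[0,\Delta)$ instead of $\Delta$, and without the projection.

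The proof proceeds in two steps.

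\textbf{Step 1 (conditional bound).} For deterministic $y\in\bar B(0,R_\Delta)$ and $r\in[0,\Delta]$, I show
\[
\be\bigl[V\bigl(y+rb^s(k\Delta,y)+\sigma^s(k\Delta,y)L_r\bigr)\bigr]\leq V(y)+C_\Xi\Delta .
\]
The drift part follows from the computation in Lemma \ref{lem:Lyapunov condition for TEM}. There it was shown, for every $r\in[0,\Delta]$, that $1+|y+rb^s(k\Delta,y)|^2\leq D_\Delta(y)$, and the same integration over $[0,r]$ instead of $[0,\Delta]$ gives $V(y+rb^s(k\Delta,y))\leq V(y)+C\Delta$. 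This uses $\Delta\leq\Delta^*$, so that $2M^2\Delta^{1/3}\leq\lambda/2$, and the bound \eqref{eq:b(hat X) linear growth}. The noise part uses the generator bound $\mathscr L_AV\leq C_{p,\alpha,d,\Lambda}$ from \cite[Lemma A.1]{CXZZ26}, together with the same localization by stopping times and Fatou's lemma, now run up to time $r\leq\Delta$. The result is an additional term $C r\leq C\Delta$.

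\textbf{Step 2 (conditioning and the grid estimate).} Conditioning on $\cf_k$ and applying Step 1 gives
\[
\be[V(\bar X_t)]\leq \be[V(\hat X_k)]+C_\Xi\Delta .
\]
Then \eqref{eq:E|Y(s+i Delta t)| leq C(1+e-beta Delta t)} yields
\[
\be[V(\hat X_k)]\leq V(x)+\frac{2C_0}{p\lambda}.
\]
Since $|z|^p\leq V(z)\leq 2^{p/2}(1+|z|^p)$, we obtain $\be|\bar X_t|^p\leq C_\Xi(1+|x|^p)$ uniformly in $t$, $s$ and $\Delta\in(0,\Delta^*]$. The constants are independent of $s$ because the coefficient bounds are uniform in time.

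\textbf{Main obstacle.} The delicate point is the drift step: because $|b|$ may be as large as $M\Delta^{-1/3}R_\Delta$, a crude bound $|rb|^p$ would blow up. The argument must instead keep the inner-product structure $\langle y,b\rangle+r|b|^2\leq -\frac{\lambda}{2}|y|^2+L+2M^2\Delta$ valid for all $r\leq\Delta$, exactly as in Lemma \ref{lem:Lyapunov condition for TEM}. Heavy tails only allow $p<\alpha$, which is handled by the generator estimate rather than moment bounds on $L_r$. An alternative for the noise part, which also works, is the triangle inequality together with $\be|L_r|^p\leq C r^{p/\alpha}$ and boundedness of $\sigma$.
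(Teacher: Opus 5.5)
Your proof is correct, but it takes a more elaborate route than the paper's, and the ``main obstacle'' you identify is not actually there. The paper applies $|a+b+c|^p\leq C_p(|a|^p+|b|^p+|c|^p)$ directly to \eqref{def:bar Xt} and bounds the three terms separately.

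\begin{itemize}
\item \emph{Drift term.} By \eqref{eq:b(hat X) linear growth}, $(t-k\Delta)|b^s(k\Delta,\hat X_k)|\leq M\Delta+M\Delta^{2/3}|\hat X_k|$. The $\Delta^{-1/3}$ loss of the truncated drift is absorbed by the time factor $\Delta$. A blow-up could only come from replacing $|\hat X_k|$ by the radius $R_\Delta$, which is unnecessary.
\item \emph{Noise term.} This is $C\Delta^{p/\alpha}$, by $\be|L_r|^p\leq Cr^{p/\alpha}$ and boundedness of $\sigma$. This is the alternative you mention at the end.
\item \emph{Conclusion.} \eqref{eq:E|Y(s+i Delta t)| leq C(1+e-beta Delta t)} then finishes the proof.
\end{itemize}

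Your route instead re-runs the Lyapunov computation of Lemma \ref{lem:Lyapunov condition for TEM} at the intermediate time $r=t-k\Delta$ and conditions on $\cf_k$. For the drift you use the inner-product structure, dissipativity and $2M^2\Delta^{1/3}\leq\lambda/2$; for the noise you use the generator bound together with localization and Fatou. I checked these steps and they hold. In particular, the integrand numerator in the drift step is bounded above by $L+2M^2\Delta$ and the denominator is at least $1$, so $V(y+rb^s(k\Delta,y))\leq V(y)+C\Delta$ for all $r\leq\Delta$.

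What your approach buys is a sharper, additive estimate $\be[V(\bar X_t)]\leq\be[V(\hat X_k)]+C_\Xi\Delta$, with no multiplicative constant in front of $\be[V(\hat X_k)]$. That would matter if you needed a Lyapunov-type contraction for the interpolation itself. For the lemma as stated, which only asks for $C_\Xi(1+|x|^p)$, the paper's triangle-inequality bound suffices and needs neither dissipativity nor the generator estimate.
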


\begin{proof}
    Fix $p\in[1,\alpha)$, and choose $\Delta^*\in(0,1)$ no larger than the step-size threshold in Lemma \ref{lem:Lyapunov condition for TEM}. Fix $s\in\br$, $x\in\br^d$ and $\Delta\in(0,\Delta^*]$. Since $\hat{X}_0=\pi_\Delta(x)$, we have $\hat{X}_k\in\bar{B}(0,R_\Delta)$ for each $k\in\bn_0$. For any $t\geq0$, there exists an integer $k\in\bn_0$ such that $t\in[k\Delta,(k+1)\Delta)$. By the definition of the interpolation, we obtain
    \[\be[|\bar{X}_t|^p]\leq C_p\be[|\hat{X}_k|^p]+C_p\Delta^p\be[|b^s(k\Delta,\hat{X}_k)|^p]+C_p\be[|\sigma^s(k\Delta,\hat{X}_k)(L_t^s-L_{k\Delta}^s)|^p].\]
    By \eqref{eq:b(hat X) linear growth} and the boundedness of $\sigma$, we have
    \[\be[|\bar{X}_t|^p]\leq C_p\be[|\hat{X}_k|^p]+C_{p,M}\Delta^p(1+\Delta^{-\frac{p}{3}}\be[|\hat{X}_k|^p])+C_{p,d,\alpha,\Lambda}\Delta^\frac{p}{\alpha}\]
    It follows from Lemma \ref{lem:Lyapunov condition for TEM} and $\Delta<1$ that
    \begin{equation*}
        \be[|\bar{X}_t|^p]\leq C_\Xi(1+|x|^p).
    \end{equation*}
   Taking the supremum over $t\geq 0$ completes the proof.
\end{proof}

\begin{lemma}\label{lem:barX-hat X}
    Suppose that Assumption \ref{cond:one-side Lip and initial value} holds. Then for every $p\in[1,\alpha)$, there exist constants $C_\Xi>0$ and $\Delta^*\in(0,1)$ such that for all $\Delta\in(0,\Delta^*]$ and $x\in\br^d$,
    \begin{equation*}
        \be[|\bar{X}_t-\hat{X}_t|^p]\leq C_\Xi(1+|x|^p)\Delta^{p\left(\frac{1}{\alpha}\wedge\frac{2}{3}\right)},~~t\geq 0.
    \end{equation*}
\end{lemma}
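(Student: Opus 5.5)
The plan is to estimate the one-step deviation directly from the definition of the interpolation \eqref{def:bar Xt}. Here $\hat{X}_t$ denotes the piecewise constant process $\hat{X}_t=\hat{X}_k$ for $t\in[k\Delta,(k+1)\Delta)$. Fix $t\ge0$ and let $k$ be such that $t\in[k\Delta,(k+1)\Delta)$. Then
\[\bar{X}_t-\hat{X}_t=b^s(k\Delta,\hat{X}_k)(t-k\Delta)+\sigma^s(k\Delta,\hat{X}_k)(L_t^s-L_{k\Delta}^s).\]
By the elementary inequality $|a+b|^p\le 2^{p-1}(|a|^p+|b|^p)$ it suffices to bound the drift part and the noise part separately.

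For the noise part, I will use the independence of the increment $L_t^s-L_{k\Delta}^s$ from $\cf_k$ together with the boundedness of $\sigma$ from \eqref{eq:numerical ellipticity}, namely $\Vert\sigma\Vert\le\sqrt{d\Lambda}$. I will also use the scaling property of the rotationally invariant $\alpha$-stable process: $L_h\overset{d}{=}h^{1/\alpha}L_1$ with $\be|L_1|^p<\infty$ for $p<\alpha$. Combining these gives
\[\be\bigl[|\sigma^s(k\Delta,\hat{X}_k)(L_t^s-L_{k\Delta}^s)|^p\bigr]\le (d\Lambda)^{p/2}\,\be|L_1|^p\,(t-k\Delta)^{p/\alpha}\le C_{p,\alpha,d,\Lambda}\Delta^{p/\alpha}.\]

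For the drift part, I will use the truncated linear growth \eqref{eq:b(hat X) linear growth}, $|b^s(k\Delta,\hat{X}_k)|\le M(1+\Delta^{-1/3}|\hat{X}_k|)$. This yields
\[\be\bigl[|b^s(k\Delta,\hat{X}_k)|^p\bigr](t-k\Delta)^p\le C_{p,M}\bigl(\Delta^p+\Delta^{2p/3}\be|\hat{X}_k|^p\bigr).\]
I then invoke the uniform moment bound of Lemma \ref{lem:Lyapunov condition for TEM}, which gives $\be|\hat{X}_k|^p\le\be V(\hat{X}_k)\le V(x)+2C_0/(p\lambda)\le C_\Xi(1+|x|^p)$ uniformly in $k$, $s$ and $\Delta\in(0,\Delta^*]$. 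Since $\Delta<1$, we have $\Delta^p\le\Delta^{2p/3}$, so the drift part is at most $C_\Xi(1+|x|^p)\Delta^{2p/3}$.

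Summing the two contributions and using $\Delta^{p/\alpha}+\Delta^{2p/3}\le 2\Delta^{p(\frac1\alpha\wedge\frac23)}$ for $\Delta<1$ gives the claim with constants independent of $t$. The only delicate point is the drift term. A naive polynomial-growth bound $|b|\lesssim 1+|x|^{q+1}$ would require moments of order $p(q+1)$, which may exceed $\alpha$ and are then unavailable. The truncation-induced linear bound \eqref{eq:b(hat X) linear growth} avoids this, at the price of the factor $\Delta^{-1/3}$, and this factor is exactly what produces the exponent $2/3$ in the rate.
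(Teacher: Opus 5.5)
Your proposal is correct and is essentially the paper's proof: the same split of $\bar X_t-\hat X_t$ into drift and noise increments on $[k\Delta,(k+1)\Delta)$, the truncated linear growth \eqref{eq:b(hat X) linear growth} giving the $\Delta^{2p/3}$ term, boundedness of $\sigma$ with $\alpha$-stable scaling giving the $\Delta^{p/\alpha}$ term, and the uniform moment bound of Lemma \ref{lem:Lyapunov condition for TEM}. One small point: you do not actually need the independence of $L_t^s-L_{k\Delta}^s$ from $\cf_k$, since $\Vert\sigma\Vert\le\sqrt{d\Lambda}$ already bounds the noise term pointwise.
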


\begin{proof}
    Fix $p\in[1,\alpha)$, and choose $\Delta^*\in(0,1)$ no larger than the step-size threshold in Lemma \ref{lem:Lyapunov condition for TEM}. Fix $s\in\br$, $x\in\br^d$ and $\Delta\in(0,\Delta^*]$. Since $\hat{X}_0=\pi_\Delta(x)$, we have $\hat{X}_k\in\bar{B}(0,R_\Delta)$ for each $k\in\bn_0$. For any $t\geq0$, there exists an integer $k\in\bn_0$ such that $t\in[k\Delta,(k+1)\Delta)$. Note that
    \[\bar{X}_t-\hat{X}_t=b^s(k\Delta,\hat{X}_k)(t-k\Delta)+\sigma^s(k\Delta,\hat{X}_k)(L_t^s-L_{k\Delta}^s).\]
    Then it follows from Lemma \ref{lem:Lyapunov condition for TEM} and \eqref{con:bounded} that
    \begin{equation}\label{eq:E[|bar X-hat X|^p] x leq R}
        \begin{aligned}
            \be[|\bar{X}_t-\hat{X}_t|^p]&\leq C_p\be[|b^s(k\Delta,\hat{X}_k)|^p]\Delta^p+C_p\be[|\sigma^s(k\Delta,\hat{X}_k)(L^s_t-L_{k\Delta}^s)|^p]\\
            &\leq C_{p,M}\Delta^p(1+\Delta^{-\frac{p}{3}}\be[|\hat{X}_k|^p])+C_\Xi\Delta^\frac{p}{\alpha}.
        \end{aligned}
    \end{equation}
    We derive from \eqref{eq:E|Y(s+i Delta t)| leq C(1+e-beta Delta t)} that 
    \begin{align*}
        \be[|\bar{X}_t-\hat{X}_t|^p]&\leq C_{p,M}(\Delta^p+\Delta^\frac{2p}{3}\be[|\hat{X}_k|^p])+C_\Xi\Delta^\frac{p}{\alpha}\leq C_\Xi(1+|x|^p)\Delta^{p\left(\frac{1}{\alpha}\wedge\frac{2}{3}\right)}.
    \end{align*}
    The proof is complete.
\end{proof}

\begin{lemma}\label{lem:P(eta leq t)}
    Suppose that Assumption \ref{cond:one-side Lip and initial value} holds. Then, for every $p\in[1,\alpha)$, there exist constants $C_\Xi\geq 1$ and $\Delta^*\in(0,1)$ such that, for every $\Delta\in(0,\Delta^*]$, the stopping time
    \[\bar{\varsigma}_{R_\Delta}:=\inf\{t\geq 0:|\bar{X}_t|\geq R_\Delta\}\]
    with $\inf\emptyset=\infty$, satisfies
    \begin{equation}\label{eq:P(eta leq t)}
        \bp(\bar{\varsigma}_{R_\Delta}\leq T)\leq \frac{C_\Xi(1+|x|^p)(1+T)}{R_\Delta^p}
    \end{equation}
    for every $T>0$ and $x\in\br^d$, where $C_\Xi$ is independent of $T$ and $\Delta$.
\end{lemma}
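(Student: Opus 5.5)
The plan is a Markov-inequality argument applied to $\sup_{0\le t\le T}|\bar X_t|^p$. The event $\{\bar\varsigma_{R_\Delta}\le T\}$ is contained in $\{\sup_{0\le t\le T}|\bar X_t|\ge R_\Delta\}$, since the path of $\bar X$ is c\`adl\`ag and the set $\{|y|\ge R_\Delta\}$ is closed. Hence
\[\bp(\bar\varsigma_{R_\Delta}\le T)\le R_\Delta^{-p}\,\be\Bigl[\sup_{0\le t\le T}|\bar X_t|^p\Bigr],\]
and it suffices to show $\be[\sup_{0\le t\le T}|\bar X_t|^p]\le C_\Xi(1+|x|^p)(1+T)$ with $C_\Xi$ independent of $\Delta$ and $T$. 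A naive bound through the drift integral would produce a factor of order $T^p$. To keep the bound linear in $T$, I would avoid the global representation \eqref{eq:bar X} and estimate the supremum cell by cell instead.

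\textbf{Step 1: cell-wise decomposition.} For $t\in[k\Delta,(k+1)\Delta)$, definition \eqref{def:bar Xt} gives
\[|\bar X_t|\le|\hat X_k|+\Delta|b^s(k\Delta,\hat X_k)|+\Lambda'\sup_{k\Delta\le r\le(k+1)\Delta}|L_r^s-L^s_{k\Delta}|,\]
where $\Lambda'$ bounds $\|\sigma\|$ via \eqref{con:bounded}. Let $n:=\lceil T/\Delta\rceil$. Raising to the power $p$ and bounding the supremum of nonnegative terms by their sum yields
\[\sup_{0\le t\le T}|\bar X_t|^p\le C_p\sum_{k=0}^{n}\Bigl(|\hat X_k|^p\,\bone\{\cdot\}+\Delta^p|b^s(k\Delta,\hat X_k)|^p+\sup_{r\le\Delta}|\Delta_k L_r|^p\Bigr).\]
This crude sum loses too much in the first term, since $\sum_k\be|\hat X_k|^p\sim T/\Delta$. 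I would therefore treat the grid values separately: $\sup_t|\bar X_t|^p\le C_p\bigl(\max_{k\le n}|\hat X_k|^p+\sum_{k\le n}(\text{increment terms})\bigr)$.

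\textbf{Step 2: the increment terms.} By \eqref{eq:b(hat X) linear growth} and Lemma \ref{lem:Lyapunov condition for TEM},
\[\be\bigl[\Delta^p|b^s(k\Delta,\hat X_k)|^p\bigr]\le C\Delta^{2p/3}(1+|x|^p).\]
Summing over $n\lesssim T/\Delta$ cells gives $C\,T\,\Delta^{2p/3-1}(1+|x|^p)$, which is bounded by $CT(1+|x|^p)$ only if $p\ge 3/2$. The noise term is worse: its sum is of order $T\Delta^{p/\alpha-1}$, which blows up as $\Delta\to0$ because $p<\alpha$. So summing cell suprema fails, and I would switch to a supremum-in-time argument.

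\textbf{Step 3 (the main obstacle): a maximal inequality.} The plan here is to mimic the proof of \eqref{eq:supxp estimate} in Theorem \ref{thm:E[|X|]}, working on each cell with $V(y)=(1+|y|^2)^{p/2}$. Applying It\^o's formula to $V(\bar X_t)$ on $[k\Delta,(k+1)\Delta)$, and noting that the projection jumps only decrease $V$ because $V(\pi_\Delta y)\le V(y)$, I obtain
\[V(\bar X_t)\le V(\pi_\Delta x)+\int_0^t\langle\nabla V(\bar X_r),b^s(\hat r,\hat X_r)\rangle\,\d r+M_t+\int_0^t\mathscr L_{\sigma}\text{-type terms}\,\d r,\]
where $M_t$ is the jump martingale part. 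The nonlocal terms are bounded by constants, as in \cite[Lemma A.1]{CXZZ26}. The drift term equals $\langle\nabla V(\hat X_r),b\rangle$ plus a correction of size $|\nabla V(\bar X_r)-\nabla V(\hat X_r)|\,|b|$. The leading part is bounded above by $pL$ through dissipativity, and the correction is controlled by Lemma \ref{lem:barX-hat X} together with $|b|\le M(1+\Delta^{-1/3}|\hat X|)$; its expectation is $O(1)$ per unit time uniformly in $\Delta$.

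The martingale part is handled as in the terms $I_2,I_3$ of Theorem \ref{thm:E[|X|]}. For the small jumps, Kunita's inequality together with $|\nabla V|\le p|y|^{p-1}$ gives $\tfrac14\be\sup|\bar X|^p+C(T^{p/2}+T^{1/2})$. For the large jumps, Young's inequality gives $\tfrac14\be\sup|\bar X|^p+C(T^p+T)$. These produce powers of $T$ rather than a linear bound. For this reason I would apply the argument only on unit time windows $[j,j+1]$, with the initial value $\bar X_j$, whose $p$-th moment is controlled by Lemma \ref{E[sup|bar X t|]}. This yields
\[\be\Bigl[\sup_{j\le t\le j+1}|\bar X_t|^p\Bigr]\le C_\Xi(1+|x|^p),\]
and summing over $j\le\lceil T\rceil$ gives the desired $C_\Xi(1+|x|^p)(1+T)$. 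The absorption of the $\tfrac14\be\sup$ terms requires finiteness a priori, which holds because $\hat X_k$ is bounded by $R_\Delta$ and the $\alpha$-stable increments have finite $p$-th moment. Establishing this uniform-in-$\Delta$ bound on unit windows is where I expect the real work to lie.
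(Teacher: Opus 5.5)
Your overall plan is workable. That plan is Markov's inequality for $\sup_{t\le T}|\bar X_t|^p$, a Kunita/Young maximal estimate on unit windows started from $\bar X_j$, and then summing over $\lceil T\rceil$ windows. Your handling of the projection jumps, the jump martingales and the a priori finiteness is fine. The step that fails as you justify it is the drift correction in Step 3.

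You bound that term by $|\nabla V(\bar X_r)-\nabla V(\hat X_r)|\,|b^s(\hat r,\hat X_r)|\lesssim|\bar X_r-\hat X_r|\,(1+\Delta^{-1/3}|\hat X_r|)$ and invoke Lemma~\ref{lem:barX-hat X}. That lemma only controls moments of order $p<\alpha<2$, whereas $|\bar X_r-\hat X_r|\,|\hat X_r|$ has total order two, so no H\"older splitting closes. Conditioning on $\cf_{\hat r}$ does not rescue it. The drift part of $\bar X_r-\hat X_r$ contributes $(r-\hat r)|b|^2\lesssim\Delta+\Delta^{1/3}|\hat X_r|^2$. The only available control is $\be|\hat X_r|^2\le R_\Delta^{2-p}\be|\hat X_r|^p$, which gives a bound of order $\Delta^{\frac13-\frac{2-p}{3q}}$, unbounded as $\Delta\to0$ when $q<2-p$. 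Using instead the $(p-1)$-H\"older continuity of $\nabla V$ leaves a factor $\Delta^{(p-1)(\frac1\alpha\wedge\frac23)-\frac13}$, unbounded for $p$ close to $1$. So your claim that this term is $O(1)$ per unit time uniformly in $\Delta$ is not established for the full range of $p$ and $q$.

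The fix is to split at $w_r:=\hat X_r+(r-\hat r)b^s(\hat r,\hat X_r)$ rather than at $\hat X_r$.
\begin{itemize}
\item As in the proof of Lemma~\ref{lem:Lyapunov condition for TEM}, $\langle\nabla V(w_r),b\rangle=p(1+|w_r|^2)^{\frac p2-1}\bigl(\langle\hat X_r,b\rangle+(r-\hat r)|b|^2\bigr)\le p(L+2M^2\Delta)$ pathwise once $2M^2\Delta^{1/3}\le\lambda/2$.
\item The difference $\bar X_r-w_r=\sigma^s(\hat r,\hat X_r)(L^s_r-L^s_{\hat r})$ is pure noise, independent of $\cf_{\hat r}$. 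Hence the remainder has expectation at most $C\Delta^{1/\alpha}(1+\Delta^{-1/3}\be|\hat X_r|)\le C(1+|x|)\Delta^{\frac1\alpha-\frac13}$.
\end{itemize}
With this change your argument closes, and it even gives the stronger maximal bound $\be\sup_{t\le T}|\bar X_t|^p\le C_\Xi(1+|x|^p)(1+T)$.

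The paper's route is much lighter. It applies It\^o's formula to the stopped process $V(\bar X_{t\wedge\bar\varsigma})$ and uses $|\bar X_{\bar\varsigma}|\ge R_\Delta$ on $\{\bar\varsigma\le T\}$, so only $\be V(\bar X_{T\wedge\bar\varsigma})$ is needed. The martingale terms vanish by optional sampling and linear growth in $T$ is automatic, which removes the $T^p$ issue that pushed you to unit windows. Stopping also keeps $\bar X_r$ in $\bar B(0,R_\Delta)$. The paper can therefore compare $b^s(\hat r,\hat X_r)$ with $b^s(r,\bar X_r)$ via the Lipschitz constant $M\Delta^{-1/3}$ and the time-H\"older bound. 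It then uses dissipativity at $\bar X_r$ to produce $-\frac{p\lambda}{2}V(\bar X_r)$, absorbs the Young terms, and finishes with Lemma~\ref{lem:barX-hat X}, since $\Delta^{-\frac p3}\Delta^{p(\frac1\alpha\wedge\frac23)}\le1$.
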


\begin{proof}
    For notational simplicity, we write $\bar{\varsigma}:=\bar{\varsigma}_{R_\Delta}$. Choose $\Delta^*\in(0,1)$ no larger than the step-size threshold in Lemma \ref{lem:Lyapunov condition for TEM}. For initial value $|x|\geq R_\Delta$, we have $\bar{\varsigma}=0$. For every $T>0$, we have
    \[\bp(\bar{\varsigma}\leq T)=1\leq \frac{C_\Xi(1+|x|^p)(1+T)}{R_\Delta^p}.\]
    We therefore assume throughout the remainder of the proof that $|x|<R_\Delta$.
    Define $V(x)=(1+|x|^2)^\frac{p}{2}$. Recall that the global representation of $\bar{X}$ is
    \[\bar{X}_t=x+\int_0^tb^s(\hat{r},\hat{X}_r)\d r+\int_0^t\sigma^s(\hat{r},\hat{X}_{r-})\d \tilde{L}_r^s+J_t^\Delta,\]
    where $J_t^\Delta=\sum_{k=1}^{\lfloor t/\Delta\rfloor}(\hat{X}_k-\tilde{X}_k)$. Applying It\^o's formula for semimartingales to $V(\bar{X}_{t\wedge\bar{\varsigma}})$ gives
    \begin{align*}
        V(\bar{X}_{t\wedge\bar{\varsigma}})&=V(x)+\int_0^{t\wedge\bar{\varsigma}}\langle \nabla V(\bar{X}_r),b^s(\hat{r},\hat{X}_r)\rangle\d r+\sum_{k\geq1,k\Delta\leq t\wedge\bar{\varsigma}}(V(\hat{X}_k)-V(\tilde{X}_k))\\
        &\quad+\int_0^{t\wedge\bar{\varsigma}}\int_{|z|>1}[V(\bar{X}_{r-}+\sigma^s(\hat{r},\hat{X}_{r-})z)-V(\bar{X}_{r-})]N(\d z,\d r)+\co_{t\wedge\bar{\varsigma}}\\
        &\quad+\int_0^{t\wedge\bar{\varsigma}}\int_{|z|\leq 1}[V(\bar{X}_{r-}+\sigma^s(\hat{r},\hat{X}_{r-})z)-V(\bar{X}_{r-})\\
        &\qquad\qquad\qquad\qquad\qquad\qquad\quad-\langle\nabla V(\bar{X}_{r-}),\sigma^s(\hat{r},\hat{X}_{r-})z\rangle]\nu(\d z)\d r,
    \end{align*}
    where
    \[\co_t=\int_0^t\int_{|z|\leq 1}[V(\bar{X}_{r-}+\sigma^s(\hat{r},\hat{X}_{r-})z)-V(\bar{X}_{r-})]\Tilde{N}(\d z,\d r)\]
    is a square-integrable martingale. The optional sampling theorem gives $\be[\co_{T\wedge\bar{\varsigma}}]=0$. Since $\hat{X}_k=\pi_\Delta(\tilde{X}_k)$, we obtain $|\hat{X}_k|\leq |\tilde{X}_k|$. Because $V$ is increasing as a function of $|x|$, it follows that
    \[V(\hat{X}_k)-V(\tilde{X}_k)\leq 0.\]
    On the event $\{r<\bar{\varsigma}\}$, we have $|\bar{X}_r|<R_\Delta$ and $|\hat{X}_r|\leq R_\Delta$. It follows from \eqref{eq:b(t,pix)-b(t,piy)} and \eqref{eq:another loc Lip} that
    \begin{equation}\label{eq: b(hat{X}-bar{X})}
        \begin{aligned}
            |b^s(\hat r,\hat X_r)-b^s(r,\bar{X}_r)|&\leq |b^s(\hat{r},\hat{X}_r)-b^s(\hat{r},\bar{X}_r)|+|b^s(\hat{r},\bar{X}_r)-b^s(r,\bar{X}_r)|\\
            &\leq M\Delta^{-\frac{1}{3}}|\hat{X}_r-\bar{X}_r|+\ell_b|r-\hat{r}|^\frac{1}{\alpha}.
        \end{aligned}
    \end{equation}
    Hence, we derive from \eqref{eq:numerical weak dissipativity} that on $\{r<\bar{\varsigma}\}$,
    \begin{align*}
        \langle\nabla V(\bar X_r),b^s(\hat r,\hat X_r)\rangle&=p(1+|\bar{X}_r|^2)^{\frac{p}{2}-1}\langle\bar X_r,b^s(\hat r,\hat X_r)\rangle\\
        &=p(1+|\bar{X}_r|^2)^{\frac{p}{2}-1}(\langle\bar X_r,b^s( r,\bar{X}_r)\rangle+\langle\bar {X}_r,b^s(\hat r,\hat X_r)-b^s(r,\bar{X}_r)\rangle)\\
        &\leq p(1+|\bar X_r|^2)^{\frac {p}{2}-1}(-\lambda|\bar X_r|^2+L+|\bar{X}_r|(M\Delta^{-\frac{1}{3}}|\hat{X}_r-\bar{X}_r|+
        \ell_b\Delta^{\frac{1}{\alpha}})).
    \end{align*}
    Observe that
    \[|\bar{X}_r|(1+|\bar{X}_r|^2)^{\frac{p}{2}-1}\leq (1+|\bar{X}_r|^2)^{\frac{p}{2}-  \frac{1}{2}}=V(\bar X_r)^{\frac{p-1}{p}},\]
    and
    \[|\bar{X}_r|^2(1+|\bar{X}_r|^2)^{\frac{p}{2}-1}=V(\bar{X}_r)-(1+|\bar X_r|^2)^{\frac p2-1}\geq V(\bar{X}_r)-1.\]
    Then we have
    \begin{align*}
        \langle\nabla V(\bar{X}_r),b^s(\hat{r},\hat{X}_r)\rangle\leq -p\lambda V(\bar{X}_r)+p(\lambda+L)+pV(\bar{X}_r)^\frac{p-1}{p}(M\Delta^{-\frac{1}{3}}|\hat{X}_r-\bar{X}_r|+
        \ell_b\Delta^{\frac{1}{\alpha}}).
    \end{align*}
    Young's inequality yields
    \begin{align*}
        pM\Delta^{-\frac{1}{3}}V(\bar{X}_r)^{\frac{p-1}{p}}|\hat{X}_r-\bar{X}_r|\leq \frac{p\lambda}{4}V(\bar{X}_r)+C_{p,M,\lambda}\Delta^{-\frac{p}{3}}|\hat{X}_r-\bar{X}_r|^p,
    \end{align*}
    and
    \begin{align*}
        p\ell_b\Delta^\frac{1}{\alpha}V(\bar{X}_r)^{\frac{p-1}{p}}&\leq\frac{p\lambda}{4}V(\bar{X}_r)+C_{p,\lambda,\ell_b}\Delta^\frac{p}{\alpha}.
    \end{align*}
    Therefore, we obtain
    \begin{align}\label{eq:xb}
        \langle\nabla V(\bar{X}_r),b^s(\hat{r},\hat{X}_r)\rangle\leq-\frac{p\lambda}{2}V(\bar{X}_r)+C_\Xi\Delta^{-\frac{p}{3}}|\hat{X}_r-\bar{X}_r|^p+C_\Xi(1+\Delta^\frac{p}{\alpha}).
    \end{align}
    Since the Lebesgue integral over a single point is zero, we have
    \begin{align*}
        \be\left[\int_{[0,T\wedge\bar{\varsigma}]}\langle \nabla V(\bar{X}_r),b^s(\hat{r},\hat{X}_r)\rangle\d r\right]=\be\left[\int_{[0,T\wedge\bar{\varsigma})}\langle \nabla V(\bar{X}_r),b^s(\hat{r},\hat{X}_r)\rangle\d r\right].
    \end{align*}
    It follows from \eqref{eq:xb} and \eqref{eq:numerical ellipticity} that
    \begin{align*}
        \be[V(\bar{X}_{T\wedge\bar{\varsigma}})]&\leq V(x)+\be\left[\int_{[0,T\wedge\bar{\varsigma})}(C_\Xi\Delta^{-\frac{p}{3}}|\hat{X}_r-\bar{X}_r|^p+C_\Xi\Delta^\frac{p}{\alpha}+C_\Xi)\d r\right]\\
        &\quad+C_\Xi\int_0^T\int_{|z|>1}\be[|\bar{X}_r|^p+|\sigma^s(\hat{r},\hat{X}_{r})z|^p]\nu(\d z)\d r\\
        &\quad+C_\Xi\int_0^T\int_{|z|\leq 1}\be[|\sigma^s(\hat{r},\hat{X}_{r})z|^2]\nu(\d z)\d r\\
        &\leq V(x)+C_\Xi \Delta^{-\frac{p}{3}}\int_0^T\be[|\hat{X}_r-\bar{X}_r|^p]\d r+C_\Xi(1+|x|^p)T.
    \end{align*}
    Since $|x|<R_\Delta$, Lemma \ref{lem:barX-hat X} yields
    \begin{align*}
        \be[V(\bar{X}_{T\wedge\bar{\varsigma}})]&\leq C_\Xi(1+|x|^p)(1+T).
    \end{align*}
    Finally, on the event $\{\bar{\varsigma}\leq T\}$, the definition of $\bar{\varsigma}$ and the right-continuity of $\bar{X}$ imply that $|\bar{X}_{\bar{\varsigma}}|\geq R_\Delta$. Therefore, we have
    \begin{align*}
        R_\Delta^p\bp(\bar{\varsigma}\leq T)\leq \be[V(\bar{X}_{T\wedge\bar{\varsigma}})\bone_{\{\bar{\varsigma}\leq T\}}]\leq \be[V(\bar{X}_{T\wedge\bar{\varsigma}})],
    \end{align*}
    which implies that \eqref{eq:P(eta leq t)} holds for $1<p<\alpha$.

    For $p=1$, we have $V(\bar X_r)^\frac{p-1}{p}=1$. Hence, Lemma \ref{lem:barX-hat X} gives
    \[\Delta^{-\frac{1}{3}}\be[|\bar X_r-\hat X_r|]\leq C_\Xi(1+|x|)\Delta^{(\frac{1}{\alpha}\wedge\frac{2}{3})-\frac{1}{3}}\leq C_\Xi(1+|x|).\]
    The temporal increment term is also uniformly bounded for $\Delta\le1$. The remaining estimates therefore yield
    \[\be V(\bar X_{T\wedge\bar\varsigma})\leq C_\Xi(1+|x|)(1+T),\]
    which proves \eqref{eq:P(eta leq t)} for $p=1$.
\end{proof}

For every fixed $\delta\in(0,1]$ and $p\in[1,\alpha)$, define $V_\delta(x):=(\delta+|x|^2)^\frac{p}{2}$.
For any $x,y,w\in\br^d$, let
\[\mathcal{D}(x,y):=V_\delta(x+y)-V_\delta(x)-\langle\nabla V_\delta(x),y\rangle,\]
and
\[\mathcal{D}(x,y,w):=V_\delta(x+y+w)-V_\delta(x+y)-\langle\nabla V_\delta(x),w\rangle.\]

Recall that $e_t:=X_t-\bar{X}_t$ and $\hat{\varsigma}:=\inf\{t\geq 0:|X_t|\geq R_\Delta\}\wedge\inf\{t\geq 0:|\bar{X}_t|\geq R_\Delta\}$. Define $G:=\{\hat{\varsigma}>0\}$ and $\Sigma_t:=\sigma^s(t,X_t)-\sigma^s(\hat{t},\hat{X}_t)$, where $\Sigma_t^1:=\sigma^s(t,X_t)-\sigma^s(t,\bar{X}_t)$ and $\Sigma_t^2:=\sigma^s(t,\bar{X}_t)-\sigma^s(\hat{t},\hat{X}_t)$. Define
\[\mathcal{J}(t):=\int_0^t\int_{|z|\leq\frac{1}{2C_\sigma}}\mathcal{D}(e_{r-},\Sigma_{r-}^1z)\nu(\d z)\d r,\]
and
\[\bar{\mathcal{J}}(t):=\int_0^t\int_{|z|\leq\frac{1}{2C_\sigma}}\mathcal{D}(e_{r-},\Sigma_{r-}^1z,\Sigma_{r-}^2z)\nu(\d z)\d r.\]

\begin{lemma}\label{lem:|z|leq 1/2C_sigma}
Suppose that Assumption \ref{cond:one-side Lip and initial value} holds. Then, for every $1\leq p\leq m<\alpha$, there exist constants $C_\Xi>0,C_{\Xi,m}>0$ and $\Delta^*\in(0,1)$ such that, for every $s\in\br$, $x\in\br^d$, $\delta\in(0,1]$, $T>0$, $t\in[0,T]$ and $\Delta\in(0,\Delta^*]$,
\begin{equation*}
    \begin{aligned}
    \be\left[\mathcal{J}(t\wedge \hat{\varsigma})\bone_G\right]\leq C_\Xi\int_0^t\be[V_\delta(e_r)\bone_{\{r<\hat{\varsigma}\}}]\d r,
    \end{aligned}
\end{equation*}
and
\begin{equation*}\label{/2C_sigma}
    \begin{aligned}
        \be\left[\bar{\mathcal{J}}(t\wedge\hat{\varsigma})\bone_G\right]
        \leq C_{\Xi}\int_0^t\be[V_\delta(e_r)\bone_{\{r<\hat{\varsigma}\}}]\d r + C_{\Xi,m}(1+|x|^m)T(\Delta^{p(\frac{1}{\alpha}\wedge\frac{2}{3}})+\delta^{\frac{p}{2}-\frac{\alpha}{2}}\Delta^{m(\frac{1}{\alpha}\wedge\frac{2}{3})}).
    \end{aligned}
\end{equation*}
The constants $C_\Xi,C_{\Xi,m}$ and $\Delta^*$ are independent of $s,x,\delta,t,T$, and $\Delta$.
\end{lemma}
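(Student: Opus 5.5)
The plan is to estimate the two Taylor remainders pointwise in $z$ using second-order information on $V_\delta$. Then we integrate against $\nu$ over the small ball $\{|z|\le \frac{1}{2C_\sigma}\}$, and finally take expectations under the stopping time $\hat\varsigma$.

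\textbf{Hessian bound.} Recall that
\[
\nabla^2 V_\delta(x)=p(\delta+|x|^2)^{\frac p2-1}I_d+p(p-2)(\delta+|x|^2)^{\frac p2-2}x\otimes x,
\]
so $\Vert\nabla^2V_\delta(x)\Vert\le C_p(\delta+|x|^2)^{\frac p2-1}$.

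\textbf{The term $\mathcal J$.} Write $\mathcal D(e,y)=\int_0^1\int_0^1\langle\nabla^2V_\delta(e+r_1r_2y)r_1y,y\rangle\,\d r_2\,\d r_1$ with $y=\Sigma^1_{r-}z$. By \eqref{eq:sigma Holder in time} we have $|y|\le C_\sigma|e_{r-}||z|\le\frac12|e_{r-}|$ on $|z|\le\frac1{2C_\sigma}$. Hence $|e+\theta y|\ge\frac12|e|$, so
\[
\delta+|e+\theta y|^2\ge\tfrac14(\delta+|e|^2).
\]
Since $\frac p2-1<0$, this gives $\Vert\nabla^2V_\delta(e+\theta y)\Vert\le C_p(\delta+|e|^2)^{\frac p2-1}$. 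Therefore
\[
|\mathcal D(e,y)|\le C_p(\delta+|e|^2)^{\frac p2-1}C_\sigma^2|e|^2|z|^2\le C V_\delta(e)|z|^2 .
\]
Integrating against $\nu$ gives $\int_{|z|\le 1/(2C_\sigma)}|z|^2\nu(\d z)<\infty$. For $r<\hat\varsigma$ we may replace $e_{r-}$ by $e_r$ inside the $\d r$ integral, since the two differ only at countably many times. Taking expectations with $\bone_G$ and using Tonelli then gives the first estimate.

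\textbf{Splitting $\mathcal D(e,y,w)$.} Now set $w=\Sigma^2_{r-}z$ and decompose
\[
\mathcal D(e,y,w)=\bigl[V_\delta(e+y+w)-V_\delta(e+y)-\langle\nabla V_\delta(e+y),w\rangle\bigr]+\langle\nabla V_\delta(e+y)-\nabla V_\delta(e),w\rangle .
\]
For the second bracket, use $|\nabla V_\delta(e+y)-\nabla V_\delta(e)|\le C(\delta+|e|^2)^{\frac p2-1}|y|$ (again because $|e+\theta y|\ge|e|/2$). This gives a bound $C(\delta+|e|^2)^{\frac{p-1}2}|w||z|$. Young's inequality splits it into $CV_\delta(e)|z|^2+C|w|^p|z|^{2-p}$, and since $p<2$, $|z|^{2-p}$ is $\nu$-integrable near the origin together with $|w|^p\le\Vert\Sigma^2\Vert^p|z|^p$. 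Here $\Vert\Sigma^2_r\Vert\le \ell_\sigma\Delta^{1/\alpha}+C_\sigma|\bar X_r-\hat X_r|$. By Lemma \ref{lem:barX-hat X}, $\be\Vert\Sigma^2_r\Vert^p\le C(1+|x|^p)\Delta^{p(\frac1\alpha\wedge\frac23)}$, and integrating in time yields the term $(1+|x|^p)T\Delta^{p(\cdot)}$.

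\textbf{The first bracket (main obstacle).} This is a Taylor remainder at the base point $e+y$, which could be close to $0$, so we cannot use the Hessian bound together with $|w|^2$ alone. The plan is the interpolation inequality
\[
|V_\delta(u+w)-V_\delta(u)-\langle\nabla V_\delta(u),w\rangle|\le C\min\{\delta^{\frac p2-1}|w|^2,\ |w|^p\}\le C\delta^{\frac p2-\frac m2}|w|^m ,
\]
valid for $p\le m\le2$. The $|w|^p$ bound uses the $(p-1)$-Hölder continuity of $\nabla V_\delta$, uniformly in $\delta$. The bound $\delta^{\frac p2-1}|w|^2$ uses $\Vert\nabla^2V_\delta\Vert\le C\delta^{\frac p2-1}$. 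Interpolating with exponent $m\in[p,\alpha)$ gives $\delta^{(p-m)/2}$, which is at most $\delta^{\frac p2-\frac\alpha2}$ since $\delta\le1$. Then $|w|^m\le\Vert\Sigma^2\Vert^m|z|^m$, and $|z|^m$ is integrable near $0$ since $m>1$. We also have $\be\Vert\Sigma^2_r\Vert^m\le C_{\Xi,m}(1+|x|^m)\Delta^{m(\frac1\alpha\wedge\frac23)}$, which follows from Lemma \ref{lem:barX-hat X} applied with exponent $m<\alpha$. Collecting the three contributions and integrating over $[0,t\wedge\hat\varsigma]$ with the indicator $\bone_G$ gives the second estimate.

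The delicate points are, first, checking that the interpolation constant is uniform in $\delta$, and second, checking that the $\nu$-integrability of $|z|^m$ and $|z|^{2-p}$ holds on the small ball. Both hold because $1<p\le m<\alpha<2$. The case $p=1$ needs a separate treatment: there $\nabla V_\delta$ is only bounded, not Hölder, so we use the bound $|w|$ directly, which is still integrable against $|z|\nu$ near $0$ because $\alpha>1$.
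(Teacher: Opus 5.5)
Your estimate for $\mathcal J$ and your handling of the second bracket $\langle\nabla V_\delta(e+y)-\nabla V_\delta(e),w\rangle$ (Hessian bound plus Young) are fine. The argument for the first bracket, however, fails at the integrability step. For $\nu(\d z)=C_{\alpha,d}|z|^{-d-\alpha}\d z$ one has
\[
\int_{|z|\le r}|z|^a\nu(\d z)=C_{\alpha,d}S_d\int_0^r\rho^{a-\alpha-1}\d\rho,
\]
which is finite if and only if $a>\alpha$. You have the condition reversed; $a<\alpha$ is the condition for integrability at infinity.

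Since $m<\alpha$, your pointwise bound $C\delta^{\frac{p-m}{2}}\Vert\Sigma_r^2\Vert^m|z|^m$ integrates to $+\infty$ over $\{|z|\le\frac{1}{2C_\sigma}\}$. Likewise $\int_{|z|\le1}|z|\nu(\d z)=\infty$ because $\alpha>1$, so your $p=1$ variant fails for the same reason. There was a warning sign: your route would give $\delta^{\frac{p-m}{2}}$, which is strictly better than the stated $\delta^{\frac p2-\frac\alpha2}$. No single pointwise exponent can repair this. An exponent above $\alpha$ is integrable but gives a worse power of $\delta$, and the exponent $\alpha$ itself is logarithmically divergent.

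The missing idea is to interpolate at the level of the $\nu$-integral rather than pointwise. Keep both bounds, $C\delta^{\frac p2-1}\Vert\Sigma_r^2\Vert^2|z|^2$ and $C\Vert\Sigma_r^2\Vert^p|z|^p$ (for $p=1$, use $2\Vert\Sigma_r^2\Vert|z|$). Then split the integral at the crossover radius $z_0:=\sqrt\delta/\Vert\Sigma_r^2\Vert$:
\[
\int_{|z|\le z_0}\delta^{\frac p2-1}\Vert\Sigma_r^2\Vert^2|z|^2\nu(\d z)+\int_{|z|>z_0}\Vert\Sigma_r^2\Vert^p|z|^p\nu(\d z)\le C\delta^{\frac{p-\alpha}{2}}\Vert\Sigma_r^2\Vert^\alpha .
\]
You then need $\be\Vert\Sigma_r^2\Vert^\alpha\le C(\Delta+\be|\bar X_r-\hat X_r|^m)$. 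The spatial part uses the boundedness of $\sigma$ from \eqref{eq:numerical ellipticity} to lower the power from $\alpha$ to $m$; the temporal part is bounded by $\ell_\sigma^\alpha\Delta$. Combined with Lemma \ref{lem:barX-hat X}, this is exactly where the pairing of $\delta^{\frac p2-\frac\alpha2}$ with $\Delta^{m(\frac1\alpha\wedge\frac23)}$ in the statement comes from.

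The paper gets the same effect differently. It uses the $e$-dependent cutoff $1/\varrho_r$, with $\varrho_r=2C_\sigma+4\Vert\Sigma_r^2\Vert/U_\delta(e_r)$, applying a second-order Taylor bound to the whole $\mathcal D(e_r,\Sigma_r^1z,\Sigma_r^2z)$ inside the cutoff and a first-order bound outside. With the fix above, your two-bracket decomposition is a valid and somewhat cleaner alternative.
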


\begin{proof}
    We begin by proving the first inequality. Taylor's formula yields
    \begin{equation*}
        \mathcal{D}(e_r,\Sigma_r^1z)=\int_0^1(1-u)\langle\nabla^2 V_\delta(e_r+u\Sigma_r^1z)\Sigma_r^1z,\Sigma_r^1z\rangle\d u.
    \end{equation*}
    Let $U_\delta(x):=(\delta+|x|^2)^\frac{1}{2}$. Then $V_\delta(x)=U_\delta(x)^p$, $\nabla V_\delta(x)=pU_\delta(x)^{p-2}x$ and $\nabla^2V_\delta(x)=pU_\delta(x)^{p-2}I
      +p(p-2)U_\delta(x)^{p-4}xx^\top$. Since $1\leq p<\alpha<2$, we have $|\nabla V_\delta(x)|\leq pU_\delta(x)^{p-1}$ and $\|\nabla^2V_\delta(x)\|
    \leq C_pU_\delta(x)^{p-2}$. For every $u\in[0,1]$, since $U_\delta$ is 1-Lipschitz, we obtain
    \begin{equation}\label{eq:V(e_sigma) geq V(e)}
        U_\delta(e_r+u\Sigma_r^1z)\geq U_\delta(e_r)-u\Vert\Sigma_r^1z\Vert\geq U_\delta(e_r)-\frac{1}{2}|e_r|\geq\frac{1}{2}U_\delta(e_r).
    \end{equation}
    For $|z|\leq\frac{1}{2C_\sigma}$, we obtain
    \begin{equation}\label{eq:D leq CV|z|2}
        \mathcal{D}(e_r,\Sigma_r^1z)\leq C_\Xi U_\delta(e_r)^{p-2}\Vert\Sigma_r^1\Vert^2|z|^2\leq C_\Xi V_\delta(e_r)|z|^2.
    \end{equation}
    It follows from \eqref{eq:V(e_sigma) geq V(e)} and \eqref{eq:D leq CV|z|2} that
    \begin{equation}\label{eq: J}
        \mathcal{J}(t\wedge\hat{\varsigma})\leq C_\Xi\int_0^{t\wedge\hat{\varsigma}}V_\delta(e_r)\d r=C_\Xi\int_0^{t}V_\delta(e_{r})\bone_{\{\hat{\varsigma}>r\}}\d r.
    \end{equation}
    Multiplying both sides of \eqref{eq: J} by $\bone_G$ and taking expectations yields the first inequality.

    We now prove the second inequality. Define
    \[\varrho_r:=\frac{2C_\sigma U_\delta(e_r)+4\Vert\Sigma_r^2\Vert}{U_\delta(e_r)}=2C_\sigma+\frac{4\Vert\Sigma_r^2\Vert}{U_\delta(e_r)}.\]
    Then $\varrho_r\geq2C_\sigma$. For $u,\bar{u}\in[0,1]$, Taylor's formula gives
    \begin{equation}\label{eq:Taylor}
        \mathcal{D}(e_r,\Sigma_r^1z,\Sigma_r^2z)=\int_0^1\int_0^1\langle\nabla^2V_\delta(e_r+\bar{u}(\Sigma_r^1z+u\Sigma_r^2z))(\Sigma_r^1z+u\Sigma_r^2z),\Sigma_r^2z\rangle\d\bar{u}\d u.
    \end{equation}
    It follows from $|z|\leq\frac{1}{\varrho_r}$ that
    \[|\Sigma_r^1z|\leq \frac{C_\sigma}{\varrho_r}|e_r|\leq\frac{1}{2}|e_r|~~\mbox{and}~~|\Sigma_r^2z|\leq \frac{\Vert\Sigma_r^2\Vert}{\varrho_r}\leq \frac{1}{4}U_\delta(e_r).\]
    Since $U_\delta$ is 1-Lipschitz, we have
    \begin{align}\label{eq:V geq 1/4V}
        U_\delta(e_r+\bar{u}(\Sigma_r^1z+u\Sigma_r^2z))\geq U_\delta(e_r)-\frac{1}{2}|e_r|-\frac{1}{4}U_\delta(e_r)\geq\frac{1}{4}U_\delta(e_r).
    \end{align}
    Combining \eqref{eq:Taylor} and \eqref{eq:V geq 1/4V}, we obtain
    \begin{align*}
        |\mathcal{D}(e_r,\Sigma_r^1z,\Sigma_r^2z)|\leq C_\Xi U_\delta(e_r)^{p-2}(U_\delta(e_r)+\Vert\Sigma_r^2\Vert)\Vert\Sigma_r^2\Vert|z|^2.
    \end{align*}
Hence, we arrive at
\begin{align}\label{eq:|z| leq a}
    \int_{|z|\leq \frac{1}{\varrho_r}}|\mathcal{D}(e_r,\Sigma_r^1z,\Sigma_r^2z)|\nu(\d z)&\leq C_\Xi U_\delta(e_r)^{p-2}(U_\delta(e_r)+\Vert\Sigma_r^2\Vert)\Vert\Sigma_r^2\Vert \varrho_r^{\alpha-2}.
\end{align}
For the remaining region $\frac{1}{\varrho_r}\leq |z|\leq \frac{1}{2C_\sigma}$ , we first observe that
\begin{equation}
    |V_\delta(u+w)-V_\delta(u)|\leq C_p(U_\delta(u)^{p-1}|w|+|w|^p).
\end{equation}
Since $U_\delta(e_r+\Sigma_r^1z)\leq \frac{3}{2}U_\delta(e_r)$ for $|z|\leq \frac{1}{2C_\sigma}$, we obtain
\[|\mathcal{D}(e_r,\Sigma_r^1z,\Sigma_r^2z)|\leq C_\Xi(U_\delta(e_r)^{p-1}\Vert\Sigma_r^2\Vert|z|+\Vert\Sigma_r^2\Vert^p|z|^p).\]
For each $a\in\{1,p\}$, since $a<\alpha$, we have
\begin{align*}
    \int_{\frac{1}{\varrho_r}<|z|\leq\frac{1}{2C_\sigma}}|z|^a\nu(\d z)=\frac{C_{\alpha,d}S_d}{\alpha-a}(\varrho_r^{\alpha-a}-(2C_\sigma)^{\alpha-a})\leq C_{\alpha,d,a}\varrho_r^{\alpha-a}.
\end{align*}
Therefore, we conclude that
\begin{equation}\label{eq:a leq |z| leq b}
    \int_{\frac{1}{\varrho_r}<|z|\leq\frac{1}{2C_\sigma}}|\mathcal{D}(e_r,\Sigma_r^1z,\Sigma_r^2z)|\nu(\d z)\leq C_\Xi(U_\delta(e_r)^{p-1}\Vert\Sigma_r^2\Vert\varrho_r^{\alpha-1}+\Vert\Sigma_r^2\Vert^p\varrho_r^{\alpha-p}).
\end{equation}
Combining \eqref{eq:|z| leq a} and \eqref{eq:a leq |z| leq b}, we obtain
\begin{equation}\label{eq:|z|leq b}
    \int_{|z|\leq \frac{1}{2C_\sigma}}|\mathcal{D}(e_r,\Sigma_r^1z,\Sigma_r^2z)|\nu(\d z)\leq C_\Xi(V_\delta(e_r)+\Vert\Sigma_r^2\Vert^p+\delta^\frac{p-\alpha}{2}\Vert\Sigma_r^2\Vert^\alpha).
\end{equation}
Here the bound follows from Young's inequality when $p>1$; when $p=1$, the corresponding term is simply $\Vert\Sigma_r^2\Vert$.
It remains to estimate the moments of $\Sigma_r^2$.
Write $\Sigma_r^2=\sigma^s(r,\bar X_r)-\sigma^s(\hat r,\bar X_r)+\sigma^s(\hat r,\bar X_r)-\sigma^s(\hat r,\hat X_r)$. By the regularity of $\sigma$ and
Lemma \ref{lem:barX-hat X},
\begin{equation*}
\begin{aligned}
    \be[\|\Sigma_r^2\|^p]\leq C_\Xi(\Delta^\frac{p}{\alpha}+\be|\bar X_r-\hat X_r|^p)\leq C_\Xi(1+|x|^p)\Delta^{p(\frac{1}{\alpha}\wedge\frac{2}{3})}.
\end{aligned}
\end{equation*}
Since $m<\alpha$ and $\|\sigma(t,x)\|\leq \sqrt{d\Lambda}$, we have
\[\|\sigma^s(\hat r,\bar X_r)-\sigma^s(\hat r,\hat X_r)\|^\alpha\leq(2\sqrt{d\Lambda})^{\alpha-m}\|\sigma^s(\hat r,\bar X_r)-\sigma^s(\hat r,\hat X_r)\|^m\leq C_{\Xi,m}|\bar X_r-\hat X_r|^m.\]
Another application of Lemma \ref{lem:barX-hat X} gives
\begin{equation*}
\begin{aligned}
    \be[\|\Sigma_r^2\|^\alpha]\leq C_{\Xi}(\Delta+\mathbb E|\bar X_r-\hat X_r|^m)\leq
    C_{\Xi,m}(1+|x|^m)\Delta^{m(\frac{1}{\alpha}\wedge\frac{2}{3})}.
\end{aligned}
\end{equation*}
Integrating \eqref{eq:|z|leq b} up to
$t\wedge\hat\varsigma$ and taking expectations, we obtain
\[\begin{aligned}
    \be[|\bar{\mathcal J}(t\wedge\hat\varsigma)|\bone_G]&\leq C_{\Xi}\int_0^t
    \be[V_\delta(e_r)\mathbf1_{\{r<\hat{\varsigma}\}}]\d r+
    C_{\Xi,m}\int_0^t(\be\|\Sigma_r^2\|^p+\delta^\frac{p-\alpha}{2}\be\|\Sigma_r^2\|^\alpha)\d r\\
    &\leq C_{\Xi}\int_0^t\be[V_\delta(e_r)\mathbf1_{\{r<\hat\varsigma\}}]\d r+
    C_{\Xi,m}(1+|x|^m)T(\Delta^{p(\frac{1}{\alpha}\wedge\frac{2}{3})}+\delta^\frac{p-\alpha}{2}\Delta^{m(\frac{1}{\alpha}\wedge\frac{2}{3})}).
\end{aligned}\]
Here we used $1+|x|^p\leq2(1+|x|^m)$. This completes the proof.
\end{proof}

\begin{lemma}\label{lem:E[|X_t-bar X_t|]}
    Suppose that Assumption \ref{cond:one-side Lip and initial value} holds. Then, for every $p\in(1,\alpha)$, there exist constants $C_\Xi>0$ and $\Delta^*\in(0,1)$ such that for all $s\in\br$, $x\in\br^d$, $T>0$ and  $\Delta\in(0,\Delta^*]$,
    \begin{equation}
        \label{eq:E[|X_t-bar X_t|]}
        \sup_{0\leq t\leq T}\be[|X_t-\bar{X}_t|]\leq C_\Xi(1+|x|^p)\re^{C_\Xi T}\Delta^{\left(\left(\frac{1}{\alpha}\wedge\frac{2}{3}\right)-\frac{1}{3}\right)\wedge\frac{p-1}{3q}}.
    \end{equation}
\end{lemma}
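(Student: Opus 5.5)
We split $\be|e_t|$ according to whether both processes have stayed inside the ball $B(0,R_\Delta)$ up to time $t$.

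\textbf{Splitting.} Write $e_t=X_t-\bar X_t$ and use the stopping time $\hat\varsigma$. Then
\[
\be|e_t|\leq \be[|e_t|\bone_{\{t<\hat\varsigma\}}]+\be[|e_t|\bone_{\{\hat\varsigma\leq t\}}].
\]

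\textbf{Outer part.} By H\"older's inequality with exponent $p>1$, Theorem \ref{thm:E[|X|]} and Lemma \ref{E[sup|bar X t|]},
\[
\be[|e_t|\bone_{\{\hat\varsigma\leq T\}}]\leq C_\Xi(1+|x|^p)^{1/p}\,\bp(\hat\varsigma\leq T)^{(p-1)/p}.
\]
We bound $\bp(\hat\varsigma\leq T)$ by the two exit probabilities. For $\bar X$, Lemma \ref{lem:P(eta leq t)} gives $C(1+|x|^p)(1+T)R_\Delta^{-p}$. For $X$, the same Dynkin/stopping argument as in \eqref{eq:P(tau<t)} gives the same type of bound. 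Since $R_\Delta=\vp^{-1}(M\Delta^{-1/3})\asymp \Delta^{-1/(3q)}$ for small $\Delta$, we get
\[
\bp(\hat\varsigma\leq T)^{(p-1)/p}\lesssim \bigl((1+|x|^p)(1+T)\bigr)^{(p-1)/p}\,\Delta^{(p-1)/(3q)}.
\]
This produces the exponent $\frac{p-1}{3q}$. The factor $(1+T)$ is absorbed into $\re^{C_\Xi T}$.

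\textbf{Inner part.} On $G=\{\hat\varsigma>0\}$, apply It\^o's formula to $V_\delta(e_{t\wedge\hat\varsigma})$ with $V_\delta(x)=(\delta+|x|^2)^{p/2}$, whose gradient is bounded by $pU_\delta^{p-1}$.

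\emph{Drift term.} Write
\[
b^s(r,X_r)-b^s(\hat r,\hat X_r)=\bigl[b^s(r,X_r)-b^s(r,\bar X_r)\bigr]+\bigl[b^s(r,\bar X_r)-b^s(\hat r,\hat X_r)\bigr].
\]
The first bracket is handled by the one-sided Lipschitz condition \eqref{eq:one-side Lip}, giving $\ell_0 p V_\delta(e_r)$. The second bracket is bounded by \eqref{eq: b(hat{X}-bar{X})}, which gives $M\Delta^{-1/3}|\bar X_r-\hat X_r|+\ell_b\Delta^{1/\alpha}$. After Young's inequality and Lemma \ref{lem:barX-hat X}, this contributes $C(1+|x|^p)\Delta^{p((\frac1\alpha\wedge\frac23)-\frac13)}$ plus $CV_\delta(e_r)$.

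\emph{Projection jumps.} The initial mismatch is $e_0=x-\pi_\Delta(x)$, which vanishes when $|x|<R_\Delta$; this is the only case that matters on $G$. The jump $J^\Delta$ is inactive before $\hat\varsigma$, because $|\tilde X_k|<R_\Delta$ there, so $\hat X_k=\tilde X_k$.

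\emph{Small jumps.} Split the noise into $|z|\leq\frac1{2C_\sigma}$ and the rest. The compensated integrals are martingales with zero mean. For the small jumps, use the decomposition
\[
V_\delta(e+\Sigma z)-V_\delta(e)-\langle\nabla V_\delta(e),\Sigma z\rangle=\mathcal D(e,\Sigma^1z)+\mathcal D(e,\Sigma^1z,\Sigma^2z),
\]
and Lemma \ref{lem:|z|leq 1/2C_sigma} bounds both pieces.

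\emph{Large jumps.} For $|z|>\frac1{2C_\sigma}$, use
\[
|V_\delta(e+w)-V_\delta(e)|\leq C\bigl(U_\delta(e)^{p-1}|w|+|w|^p\bigr)\quad\text{with } |w|\leq (C_\sigma|e|+\|\Sigma^2\|)|z|.
\]
Because $\int_{|z|>c}|z|^a\nu(\d z)<\infty$ for $a\in\{1,p\}$, Young's inequality gives a contribution $CV_\delta(e)+C\be\|\Sigma^2\|^p$.

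\emph{Gronwall.} Collecting terms,
\[
\be[V_\delta(e_{t\wedge\hat\varsigma})\bone_G]\leq C\int_0^t\be[V_\delta(e_r)\bone_{\{r<\hat\varsigma\}}]\d r+C(1+|x|^m)T\,\mathrm{err}(\Delta,\delta).
\]
Gronwall's inequality then gives $\be[V_\delta(e_{t\wedge\hat\varsigma})]\leq C\re^{CT}(1+|x|^m)\,\mathrm{err}$.

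\emph{Parameters.} Choose $m\in(p,\alpha)$ close to $\alpha$ and $\delta=\Delta^{\kappa}$ so that $\delta^{(p-\alpha)/2}\Delta^{m(\frac1\alpha\wedge\frac23)}$ does not dominate; we also need $|x|^m$ replaced by $|x|^p$, either via the choice $m=p$ where possible or via the $\bone_{\{|x|<R_\Delta\}}$ restriction. Finally use
\[
|e|\leq V_\delta(e)^{1/p}\quad\text{and}\quad \be[|e_t|\bone_{\{t<\hat\varsigma\}}]\leq \bigl(\be[V_\delta(e_{t\wedge\hat\varsigma})]\bigr)^{1/p}.
\]
This yields the rate $\Delta^{(\frac1\alpha\wedge\frac23)-\frac13}$, up to the $\delta$-bookkeeping.

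\textbf{Main obstacle.} The hardest step is this bookkeeping between $\delta$, $m$ and $p$ in the $\delta^{(p-\alpha)/2}\|\Sigma^2\|^\alpha$ term, so that the final exponent really is $\bigl((\frac1\alpha\wedge\frac23)-\frac13\bigr)\wedge\frac{p-1}{3q}$. A secondary obstacle is checking that the localization makes $J^\Delta$ and the drift truncation harmless before $\hat\varsigma$.
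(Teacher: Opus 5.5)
Your argument is correct and has the same architecture as the paper's proof: localization at $\hat\varsigma$, vanishing of $J^\Delta$ and of $e_0$ on $G$, the one-sided Lipschitz bound for the drift, Lemma~\ref{lem:|z|leq 1/2C_sigma} for the small jumps, Gronwall, and the same H\"older/exit-probability treatment of $\{\hat\varsigma\le T\}$. The genuine difference is the test function.

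\textbf{The paper's test function.} The paper applies It\^o's formula to the first-power function $(\delta+|e|^2)^{1/2}$. This function is $1$-Lipschitz with gradient bounded by $1$, so the drift mismatch and the large jumps enter linearly; only $\mathbb{E}|\bar X_r-\hat X_r|$ and $\mathbb{E}\|\Sigma^2_r\|$ are needed. Lemma~\ref{lem:|z|leq 1/2C_sigma} is then used with exponent $1$ and $m=p$. The choice $\delta=\Delta^{\frac{2p}{\alpha}(\frac1\alpha\wedge\frac23)}$ balances $\sqrt\delta$ against $\delta^{\frac{1-\alpha}{2}}\Delta^{p(\frac1\alpha\wedge\frac23)}$, and Gronwall bounds $\mathbb{E}[|e_{t\wedge\hat\varsigma}|\mathbf 1_G]$ directly.

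\textbf{Your test function.} Your $p$-th power $V_\delta$ costs Young's inequality in the drift and large-jump terms, plus a final Jensen step. In return it gives the stronger localized bound $\mathbb{E}[|e_{t\wedge\hat\varsigma}|^p\mathbf 1_G]\lesssim\Delta^{p((\frac1\alpha\wedge\frac23)-\frac13)}$. That extra strength is not needed for the $L^1$ statement.

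\textbf{Closing the bookkeeping.} The step you leave open does close, and $m=p$ already suffices. Taking $m>p$ is unnecessary: it produces $|x|^m$, and removing it via $|x|<R_\Delta$ costs a factor $R_\Delta^{m-p}\sim\Delta^{-(m-p)/(3q)}$. Set $m=p$, $c:=\frac1\alpha\wedge\frac23$ and $\delta:=\Delta^{2(c-\frac13)}$. Then the initial term is $\delta^{p/2}=\Delta^{p(c-\frac13)}$. The small-jump term satisfies $\delta^{\frac{p-\alpha}{2}}\Delta^{pc}\le\Delta^{p(c-\frac13)}$ if and only if $(\alpha-p)c\le\frac\alpha3$. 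This holds for every $p\in(1,\alpha)$:
\begin{itemize}
\item for $\alpha\le\frac32$ it reads $\alpha\le 2p$;
\item for $\alpha>\frac32$ it reads $p\ge\alpha-\frac{\alpha^2}{3}$, whose right-hand side is at most $\frac34$.
\end{itemize}

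\textbf{The Jensen step.} Keep the indicator there: write $\mathbb{E}[|e_t|\mathbf 1_{\{t<\hat\varsigma\}}]\le(\mathbb{E}[V_\delta(e_{t\wedge\hat\varsigma})\mathbf 1_G])^{1/p}$. Your Gronwall estimate holds only for the $\mathbf 1_G$ version, because on $G^c$ the initial mismatch $x-\pi_\Delta(x)$ is not small.
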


\begin{proof}
    Recall that $e_t=X_t-\bar{X}_t$. Observe that
    \begin{equation}\label{eq:e_t}
        e_t=x-\pi_\Delta(x)+\int_0^t(b^s(r,X_r)-b^s(\hat{r},\hat{X}_r))\d r+\int_0^t(\sigma^s(r,X_{r-})-\sigma^s(\hat{r},\hat{X}_{r-}))\d \tilde{L}_r^s-J_t^\Delta.
    \end{equation}
    Recall $\hat{\varsigma}=\varsigma_{R_\Delta}\wedge\bar{\varsigma}_{R_\Delta}$. We claim that the correction process $J^\Delta$ has no nonzero projection jump before $\hat{\varsigma}$. Indeed, if $\hat{X}_k\neq\tilde{X}_k$, then the definition of the projection implies that $|\tilde{X}_k|>R_\Delta$. On the other hand, $\lim_{t\uparrow k\Delta}\bar{X}_t=\tilde{X}_k$. Therefore, there exists some $t<k\Delta$ sufficiently close to $k\Delta$ such that $|\bar{X}_t|\geq R_\Delta$. It follows that $\bar{\varsigma}_{R_\Delta}<k\Delta$. Consequently, we have
    \begin{equation}\label{eq:hat X-tilde X=0}
        (\hat{X}_k-\tilde{X}_k)\bone_{\{k\Delta\leq\hat{\varsigma}\}}=0,~~k\in\bn.
    \end{equation}
    Thus, the projection-jump contribution in It\^o's formula for the stopped error process vanishes.
    
    Recall the notation $\Sigma_r:=\sigma^s(r,X_r)-\sigma^s(\hat{r},\hat{X}_r)$. We decompose $\Sigma_r$ as $\Sigma_r=\Sigma_r^1+\Sigma_r^2$, where $\Sigma_r^1:=\sigma^s(r,X_r)-\sigma^s(r,\bar{X}_r)$ and $\Sigma_r^2:=\sigma^s(r,\bar{X}_r)-\sigma^s(\hat{r},\hat{X}_r)$.
    For every fixed $\delta>0$, let $V(x)=(\delta+|x|^2)^\frac{1}{2}$. Let $G:=\{\hat{\varsigma}>0\}$. On the event $G$, we have $|x|<R_\Delta$ and $e_0=x-\pi_\Delta(x)=0$. Consequently, we obtain $V(e_0)\bone_G=\sqrt{\delta}\bone_G$. If $|x|\geq R_\Delta$, then $\hat{\varsigma}=0$ and $G$ is empty. Applying It\^o's formula to $V(e_{t\wedge\hat{\varsigma}})$ and using \eqref{eq:hat X-tilde X=0}, we obtain
    \begin{equation}
        \begin{aligned}
            \be[V(e_{t\wedge\hat{\varsigma}})\bone_G]&=\be[\sqrt{\delta}\bone_G]+\be\left[\int_0^{t\wedge\hat{\varsigma}}\frac{\langle e_r, b^s(r,X_r)-b^s(\hat{r},\hat{X}_r)\rangle}{(\delta+|e_r|^2)^{\frac{1}{2}}}\bone_G\d r\right]+\be[\co_{t\wedge\hat{\varsigma}}\bone_G]\\
            &\quad+\be\left[\int_0^{t\wedge\hat{\varsigma}}\int_{|z|>\frac{1}{2C_\sigma}}[V(e_{r-}+\Sigma_{r-}z)-V(e_{r-})]\bone_G N(\d z, \d r)\right]\\
            &\quad+\be[\mathcal{J}(t\wedge\hat{\varsigma})\bone_G+\bar{\mathcal{J}}(t\wedge\hat{\varsigma})\bone_G]\\
            &=:\be[\sqrt{\delta}\bone_G]+\be[I_{t\wedge \hat{\varsigma}}^1\bone_G]+\be[\co_{t\wedge\hat{\varsigma}}\bone_G]+\be[I_{t\wedge \hat{\varsigma}}^2\bone_G]+\be[I_{t\wedge \hat{\varsigma}}^3\bone_G],
        \end{aligned}
    \end{equation}
    where
    \[\co_t=\int_0^t\int_{|z|\leq \frac{1}{2C_\sigma}}[V(e_{r-}+\Sigma_{r-}z)-V(e_{r-})]\Tilde{N}(\d z, \d r)\]
    is a square-integrable martingale. Since $G\in\cf_0$ and $\co_0=0$, it follows that
    \begin{equation}\label{eq:O_t=0}
        \be[\co_{t\wedge\hat{\varsigma}}\bone_G]= \be[\be[\co_{t\wedge\hat{\varsigma}}|\cf_0]\bone_G]=\be[\co_0\bone_G]=0.
    \end{equation}
    
    For any $r<\hat{\varsigma}$, we have $|X_r|<R_\Delta$ and $|\hat{X}_r|\leq R_\Delta$. By the one-sided Lipschitz condition and $|e_r|(\delta+|e_r|^2)^{-\frac{1}{2}}\leq 1$, we have 
    \begin{equation}\label{eq:langle e_t,b-b rangle}
        \frac{\langle e_r,b^s(r,X_r)-b^s(\hat{r},\hat{X}_r)\rangle}{(\delta+|e_r|^2)^\frac{1}{2}}\leq \ell_0|e_r|+|b^s(r,\bar{X}_r)-b^s(\hat{r},\hat{X}_r)|.
    \end{equation}
    Since the Lebesgue integral over a single point is zero, we have
    \[\be\left[\int_{[0,t\wedge\hat{\varsigma}]}(b^s(r,\bar{X}_r)-b^s(\hat{r},\hat{X}_r))\bone_{\{r<\hat{\varsigma}\}}\d r\right]=\be\left[\int_{[0,t\wedge\hat{\varsigma})}(b^s(r,\bar{X}_r)-b^s(\hat{r},\hat{X}_r))\bone_{\{r<\hat{\varsigma}\}}\d r\right].\]
    Then it follows from \eqref{eq: b(hat{X}-bar{X})} and Lemma \ref{lem:barX-hat X} that
    \begin{align*}
        \be\left[I^1_{t\wedge \hat{\varsigma}}\bone_G\right]&\leq \be\left[\int_{[0,t\wedge\hat{\varsigma})}(\ell_0|e_r|+|b^s(r,\bar{X}_r)-b^s(\hat{r},\hat{X}_r)|)\bone_G\d r\right]\\
        &\leq \ell_0\int_0^t\be[|e_{r\wedge \hat{\varsigma}}|\bone_G]\d r+M\Delta^{-\frac{1}{3}}\int_0^t\be[|\bar{X}_r-\hat{X}_r|]\d r+\ell_b T\Delta^\frac{1}{\alpha}\\
        &\leq \ell_0\int_0^t\be[|e_{r\wedge \hat{\varsigma}}|\bone_G]\d r+C_\Xi(1+|x|) T\Delta^{\left(\frac{1}{\alpha}\wedge\frac{2}{3}\right)-\frac{1}{3}},
    \end{align*}
    where the last inequality follows from $\frac{1}{\alpha}>\left(\frac{1}{\alpha}\wedge\frac{2}{3}\right)-\frac{1}{3}$. Since $V(x)$ is a 1-Lipschitz function, we have $|V(e_r+\Sigma_rz)-V(e_r)|\leq \Vert\Sigma_r\Vert|z|$.
    Moreover, we have $\Vert\Sigma_t^1\Vert\leq C_\sigma|e_t|$. Then it follows that
    \begin{align*}
        \be\left[I^2_{t\wedge \hat{\varsigma}}\bone_G\right]&\leq \be\left[\int_0^{t\wedge\hat{\varsigma}}\int_{|z|>\frac{1}{2C_\sigma}} \Vert\Sigma_r^1+\Sigma_r^2\Vert|z|\bone_G\nu(\d z)\d r\right]\\
        &\leq C_\Xi \int_0^t\be[| e_{r\wedge\hat{\varsigma}}|\bone_{\{\hat{\varsigma}>0\}}]\d r+C_\Xi\int_0^t\be[\Vert\Sigma_r^2\Vert]\d r.
    \end{align*}
    By \eqref{eq:sigma Holder in time} and Lemma \ref{lem:barX-hat X}, we have
    \begin{align*}
        \be[\Vert\Sigma_r^2\Vert]&\leq \ell_\sigma|r-\hat{r}|^\frac{1}{\alpha}+C_\sigma\be[|\bar{X}_r-\hat{X}_r|]\leq C_\Xi(1+|x|)\Delta^{\frac{1}{\alpha}\wedge\frac{2}{3}}.
    \end{align*}
    Then we conclude that
    \begin{align*}
        \be\left[I^2_{t\wedge \hat{\varsigma}}\bone_G\right]\leq C_\Xi\int_0^t\be[|e_{r\wedge\hat{\varsigma}}|\bone_G]\d r+C_\Xi(1+|x|)T\Delta^{\frac{1}{\alpha}\wedge\frac{2}{3}}.
    \end{align*}
    By Lemma \ref{lem:|z|leq 1/2C_sigma} with $p=1$ and $m=p$, we have
    \begin{align*}
       \be\left[I^3_{t\wedge   \hat{\varsigma}}\bone_G\right]&\leq C_\Xi\int_0^t\be[V(e_{r\wedge\hat{\varsigma}})\bone_G]\d r+C_\Xi(1+|x|^p)T(\Delta^{\frac{1}{\alpha}\wedge\frac{2}{3}}+\delta^{\frac{1}{2}-\frac{\alpha}{2}}\Delta^{p(\frac{1}{\alpha}\wedge\frac{2}{3})}).
    \end{align*}
    Combining the preceding estimates, we obtain
    \begin{equation*}
        \begin{aligned}
            \be[V(e_{t\wedge\hat{\varsigma}})\bone_G]&\leq \sqrt{\delta}+C_\Xi\int_0^t\be[V(e_{r\wedge\hat{\varsigma}})\bone_G]\d r\\
            &~~+C_\Xi(1+|x|^p)T(\Delta^{\left(\frac{1}{\alpha}\wedge\frac{2}{3}\right)-\frac{1}{3}}+\Delta^{{\frac{1}{\alpha}\wedge\frac{2}{3}}}+\delta^{{\frac{1}{2}-\frac{\alpha}{2}}}\Delta^{{p(\frac{1}{\alpha}\wedge\frac{2}{3}})}).
        \end{aligned}
    \end{equation*}
    Choose $\delta:=\Delta^{\frac{2p}{\alpha}(\frac{1}{\alpha}\wedge\frac{2}{3})}$, then $\sqrt{\delta}=\Delta^{\frac{p}{\alpha}(\frac{1}{\alpha}\wedge\frac{2}{3})}$ and $\delta^\frac{1-\alpha}{2}\Delta^{p(\frac{1}{\alpha}\wedge\frac{2}{3})}=\Delta^{\frac{p}{\alpha}(\frac{1}{\alpha}\wedge\frac{2}{3})}$.
    It is not difficult to see that $\frac{p}{\alpha}(\frac{1}{\alpha}\wedge\frac{2}{3})>(\frac{1}{\alpha}\wedge\frac{2}{3})-\frac{1}{3}$. Gronwall's inequality gives
    \begin{align}\label{eq:b1}
        \sup_{0\leq t\leq T}\be[|e_{t\wedge\hat{\varsigma}}|\bone_G]\leq C_\Xi(1+|x|^p)\re^{C_\Xi T}\Delta^{\left(\frac{1}{\alpha}\wedge\frac{2}{3}\right)-\frac{1}{3}}.
    \end{align}

    We now remove the stopping time. For any $t\leq T$, we decompose
    \[\be[|e_t|]=\be[|e_t|\bone_{\{\hat{\varsigma}>T\}}]+\be[|e_t|\bone_{\{\hat{\varsigma}\leq T\}}].\]
    The first term is bounded by \eqref{eq:b1}.
    The exit probability estimates for the exact solution and the interpolation imply
    \begin{equation*}
        \bp(\hat{\varsigma}\leq T)\leq \bp(\varsigma_{R_\Delta}\leq T)+\bp(\bar{\varsigma}_{R_\Delta}\leq T)\leq \frac{C_\Xi(1+|x|^p)(1+T)}{R_\Delta^p}.
    \end{equation*}
    Thus H\"older's inequality, Theorem \ref{thm:E[|X|]} and Lemma \ref{E[sup|bar X t|]} give
    \begin{equation*}
        \begin{aligned}
            \be[|e_t|\bone_{\{\hat{\varsigma}\leq T\}}]&\leq (\be[|e_{t}|^p])^\frac{1}{p}(\bp(\hat{\varsigma}\leq T))^\frac{p-1}{p}\leq C_\Xi(1+|x|^p)(1+T)^\frac{p-1}{p}R_\Delta^{1-p}.
        \end{aligned}
    \end{equation*}
    Hence, we conclude that
    \begin{equation}\label{eq:sup|e_t|}
        \sup_{0\leq t\leq T}\be[|e_t|]\leq C_\Xi(1+|x|^p)\re^{C_\Xi T}(\Delta^{\left(\frac{1}{\alpha}\wedge\frac{2}{3}\right)-\frac{1}{3}}+R_\Delta^{1-p}).
    \end{equation}
    Recall that $\vp(R)=\ell(1+2R^{q})$ and $M\geq 3\ell$, then we have $R_\Delta^q=2^{-1}\ell^{-1}M\Delta^{-\frac{1}{3}}-2^{-1}\geq\Delta^{-\frac{1}{3}}$. It follows that $R_\Delta^{-(p-1)}\leq \Delta^\frac{p-1}{3q}$.  Substituting this into \eqref{eq:sup|e_t|} proves \eqref{eq:E[|X_t-bar X_t|]} for every $x\in\br^d$.
\end{proof}

\begin{theorem}\label{thm:sup E[|X_t-hat X_t|]}
    Suppose that Assumption \ref{cond:one-side Lip and initial value} holds. Then, for every $p\in(1,\alpha)$, there exist constants $C_\Xi>0$ and $\Delta^*\in(0,1)$ such that for every $s\in\br$, $x\in\br^d$, $T>0$ and $\Delta\in(0,\Delta^*]$,
    \begin{equation*}
        \sup_{0\leq t\leq T}\be[|X_t-\hat{X}_t|]\leq C_\Xi(1+|x|^p)\re^{C_\Xi T}\Delta^{\left(\left(\frac{1}{\alpha}\wedge\frac{2}{3}\right)-\frac{1}{3}\right)\wedge\frac{p-1}{3q}}.
    \end{equation*}
    Here $\hat{X}_0=\pi_\Delta(x)$, and the constants are independent of $s,x,T$ and $\Delta$.
\end{theorem}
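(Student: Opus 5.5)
The estimate follows from Lemma \ref{lem:E[|X_t-bar X_t|]} and Lemma \ref{lem:barX-hat X} via the triangle inequality
\[
\be[|X_t-\hat{X}_t|]\leq \be[|X_t-\bar{X}_t|]+\be[|\bar{X}_t-\hat{X}_t|],
\]
where $\hat{X}_t:=\hat{X}_k$ for $t\in[k\Delta,(k+1)\Delta)$ is the piecewise constant version used in Lemma \ref{lem:barX-hat X}. At grid points $t=k\Delta$ the second term vanishes, since $\bar{X}_{k\Delta}=\hat{X}_k$ by \eqref{eq:cadlag}.

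For the first term, I take $\Delta^*$ to be the minimum of the thresholds in Lemmas \ref{lem:Lyapunov condition for TEM}, \ref{lem:barX-hat X} and \ref{lem:E[|X_t-bar X_t|]}. Lemma \ref{lem:E[|X_t-bar X_t|]} then directly gives
\[
\sup_{0\leq t\leq T}\be[|X_t-\bar{X}_t|]\leq C_\Xi(1+|x|^p)\re^{C_\Xi T}\Delta^{\kappa},\qquad \kappa:=\Bigl(\bigl(\tfrac{1}{\alpha}\wedge\tfrac{2}{3}\bigr)-\tfrac{1}{3}\Bigr)\wedge\frac{p-1}{3q}.
\]

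For the second term, I apply Lemma \ref{lem:barX-hat X} with exponent $1$, which is admissible since $1\in[1,\alpha)$. This gives
\[
\be[|\bar{X}_t-\hat{X}_t|]\leq C_\Xi(1+|x|)\Delta^{\frac{1}{\alpha}\wedge\frac{2}{3}}
\]
uniformly in $t\geq0$. I then compare exponents: $\frac{1}{\alpha}\wedge\frac{2}{3}>\bigl(\frac{1}{\alpha}\wedge\frac{2}{3}\bigr)-\frac13\geq\kappa$, and $\Delta<1$, so $\Delta^{\frac{1}{\alpha}\wedge\frac{2}{3}}\leq\Delta^{\kappa}$. Combining this with $1+|x|\leq 2(1+|x|^p)$ for $p>1$ and $\re^{C_\Xi T}\geq1$, the second term is absorbed into the first bound after enlarging $C_\Xi$.

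Taking the supremum over $t\in[0,T]$ gives the claim. All constants come from the cited lemmas, which are independent of $s,x,T,\Delta$, so the resulting constants are too.

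The main points to check are that the piecewise-constant interpretation of $\hat{X}_t$ matches the one in Lemma \ref{lem:barX-hat X}, and that $\kappa>0$. The latter holds because $\frac{1}{\alpha}>\frac12>\frac13$ and $p>1$. No other step needs new work.
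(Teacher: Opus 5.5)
Your proposal is correct and follows the paper's proof essentially verbatim. You split the error through the interpolation $\bar{X}$ by the triangle inequality, bound $\be[|X_t-\bar{X}_t|]$ by the preceding lemma, and bound $\be[|\bar{X}_t-\hat{X}_t|]$ by the $p=1$ case of the interpolation lemma; the resulting $C_\Xi(1+|x|)\Delta^{\frac{1}{\alpha}\wedge\frac{2}{3}}$ term is absorbed because it has the higher rate.
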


\begin{proof}
    By Lemmas \ref{lem:E[|X_t-bar X_t|]} and \ref{lem:barX-hat X}, we derive that
    \begin{align*}
        \sup_{0\leq t\leq T}\be[|X_t-\hat{X}_t|]&\leq \sup_{0\leq t\leq T}\be[|X_t-\bar{X}_t|]+\sup_{0\leq t\leq T}\be[|\bar{X}_t-\hat{X}_t|]\\
        &\leq C_\Xi(1+|x|^p)\re^{C_\Xi T}\Delta^{((\frac{1}{\alpha}\wedge\frac{2}{3})-\frac{1}{3})\wedge\frac{p-1}{3q}}+C_\Xi(1+|x|)\Delta^{\frac{1}{\alpha}\wedge\frac{2}{3}}\\
        &\leq C_\Xi(1+|x|^p)\re^{C_\Xi T}\Delta^{((\frac{1}{\alpha}\wedge\frac{2}{3})-\frac{1}{3})\wedge\frac{p-1}{3q}}.
    \end{align*}
    This completes the proof.
\end{proof}

Taking $T=n\Delta$ in Theorem \ref{thm:sup E[|X_t-hat X_t|]} gives the following bound for the approximation error uniformly over the grid points in a finite time interval. It will be used in the next section.

\begin{corollary}\label{cor:X_i-hat{X}_i}
Suppose that Assumption \ref{cond:one-side Lip and initial value} holds. Then, for every $p\in(1,\alpha)$, there exist constants $C_\Xi,\bar{\gamma}>0$ and $\Delta^*\in(0,1)$ such that for every $s\in\br$, $x\in\br^d$, $n\in\bn_0$ and $\Delta\in(0,\Delta^*]$,
\begin{equation*}
    \sup_{0\leq i\leq n}\be[|X_{s+i\Delta}^{s,x}-\hat{X}_{s+i\Delta}^{s,\pi_\Delta(x)}|]\leq C_\Xi(1+|x|^p)\re^{\bar{\gamma} n\Delta}\Delta^{((\frac{1}{\alpha}\wedge\frac{2}{3})-\frac{1}{3})\wedge\frac{p-1}{3q}}.
\end{equation*}
The constants are independent of $s,x,n$ and $\Delta$.
\end{corollary}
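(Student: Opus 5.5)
This is a direct specialization of Theorem \ref{thm:sup E[|X_t-hat X_t|]} to grid times. The only bookkeeping needed is to match the shifted notation and to absorb the exponential factor.

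First, I identify the quantities. With the notation of Section \ref{sec:dpm}, $X_t=X_{s+t}^{s,x}$ and $\hat{X}_i=\hat{X}_{s+i\Delta}^{s,\pi_\Delta(x)}$. By the definition \eqref{def:bar Xt} of the interpolation and the convention $\hat{X}_r=\hat{X}_k$ for $r\in[k\Delta,(k+1)\Delta)$, the continuous-time process $\hat{X}_t$ in Theorem \ref{thm:sup E[|X_t-hat X_t|]} evaluated at $t=i\Delta$ equals $\hat{X}_i$. Hence
\[\be[|X_{s+i\Delta}^{s,x}-\hat{X}_{s+i\Delta}^{s,\pi_\Delta(x)}|]=\be[|X_{i\Delta}-\hat{X}_{i\Delta}|].\]

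Second, I fix $n\in\bn_0$ and apply Theorem \ref{thm:sup E[|X_t-hat X_t|]} with horizon $T=n\Delta$. The case $n=0$ needs separate care: the theorem is stated for $T>0$, so I would either take $T=\Delta$, which produces the harmless factor $\re^{C_\Xi\Delta}\leq \re^{C_\Xi}$, or argue directly. The direct argument is that $|x-\pi_\Delta(x)|$ is nonzero only when $|x|>R_\Delta$. In that case
\[|x-\pi_\Delta(x)|\leq |x|\leq |x|^pR_\Delta^{1-p}\leq |x|^p\Delta^{\frac{p-1}{3q}},\]
using $R_\Delta^{-(p-1)}\leq\Delta^{\frac{p-1}{3q}}$, which was shown at the end of the proof of Lemma \ref{lem:E[|X_t-bar X_t|]}. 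For $n\geq1$, restricting the supremum over $t\in[0,n\Delta]$ to the grid points $t=i\Delta$, $0\leq i\leq n$, gives the bound
\[C_\Xi(1+|x|^p)\re^{C_\Xi n\Delta}\Delta^{((\frac{1}{\alpha}\wedge\frac{2}{3})-\frac{1}{3})\wedge\frac{p-1}{3q}}.\]

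Third, I set $\bar{\gamma}:=C_\Xi$, enlarging the multiplicative constant by $\re^{C_\Xi}$ if the $T=\Delta$ route is used for $n=0$. I take $\Delta^*$ to be the threshold from Theorem \ref{thm:sup E[|X_t-hat X_t|]}. Since these constants there are independent of $s,x,T,\Delta$, they are independent of $s,x,n,\Delta$ here.

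There is no real obstacle. The only points requiring care are the identification of the piecewise-constant $\hat{X}_t$ at grid times with the scheme values, and the degenerate case $n=0$.
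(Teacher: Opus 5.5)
Your proposal is correct and follows the paper's argument: the corollary is obtained by taking $T=n\Delta$ in Theorem \ref{thm:sup E[|X_t-hat X_t|]}, restricting the supremum to grid times, and setting $\bar\gamma=C_\Xi$. Your separate treatment of $n=0$ is extra care that the paper leaves implicit here, since the theorem is stated only for $T>0$. Your direct bound on $|x-\pi_\Delta(x)|$ is the same one the paper uses for the $n=0$ case of Proposition \ref{prop:P hat P}.
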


\section{Error analysis}\label{sec:error analysis}
In this section, we use perturbation theory for Markov chains \cite{SS00} to establish convergence rates for the periodic measures of the truncated Euler--Maruyama scheme toward the corresponding periodic measures of SDE \eqref{SDEs}. For $1\leq p<\alpha$, let
\[V_p(x):=(1+|x|^2)^\frac{p}{2},~~x\in\br^d.\]
Throughout this section, the numerical transition kernel is understood through the projected initial value:
\[\hat P(s,x,s+n\Delta,\cdot)=\bp(\hat X_{s+n\Delta}^{s,\pi_\Delta(x)}\in\cdot).\]
In particular, $\hat P(s,x,s,\cdot)=\delta_{\pi_\Delta(x)}$.

\begin{proposition}\label{prop:exact}
Suppose that Assumptions \ref{A:b}, \ref{A:sigma} and \ref{A:periodic} hold. Then, for every $1\leq p<\alpha$, there exists a constant $C_1>0$ such that for all $s\in\br$, $x\in\br^d$ and $n\in\bn_0$,
\begin{equation*}
    \cw_1(P(s,x,s+n\Delta,\cdot),\mu_{s+n\Delta})\leq C_1\re^{-\gamma n\Delta}V_p(x),
\end{equation*}
where $\gamma$ is introduced in Corollary \ref{cor:W(P(s,x,t,cdot),mut)}. The constant $C_1$ may depend on $p$, but is independent of $s,x,n$ and $\Delta$.
\end{proposition}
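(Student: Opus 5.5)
The plan is to deduce the $\cw_1$ bound from Corollary \ref{cor:W(P(s,x,t,cdot),mut)} by comparing $\cw_1$ with the weighted total variation distance $\rho_\beta$. We apply the corollary with $t=s+n\Delta$, which gives
\[\rho_\beta(P(s,x,s+n\Delta,\cdot),\mu_{s+n\Delta})\leq \bar C(1+|x|^p)\re^{-\gamma n\Delta}.\]
Here $\rho_\beta$ is defined with the Lyapunov function $V=V_p=(1+|x|^2)^{p/2}$ of Theorem \ref{thm:periodic measure of SDEs}, and $\beta>0$ is the constant from Lemma \ref{lem:||P(s,s+nT)||leq eta^n}. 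It remains to control $\cw_1$ by $\rho_\beta$ and to note that $1+|x|^p\leq 2V_p(x)$.

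For the comparison step we use the Kantorovich--Rubinstein duality. For $f$ 1-Lipschitz we subtract $f(0)$; this is allowed because both measures have mass one, so $\mu_1-\mu_2$ has total mass zero. Then
\[\Bigl|\int f\,\d(\mu_1-\mu_2)\Bigr|\leq \int |x|\,|\mu_1-\mu_2|(\d x).\]
The main obstacle is that $V_p$ grows only like $|x|^p$, so it dominates $|x|$ only when $p\geq1$. At $p=1$ we have $|x|\leq V_1(x)\leq \beta^{-1}(1+\beta V_1(x))$, which gives $\cw_1\leq \beta^{-1}\rho_\beta$ directly. For $p\in(1,\alpha)$ we have $|x|\leq |x|^p+1\leq 2V_p(x)$, so $\cw_1\leq 2\beta^{-1}\rho_\beta$. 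This is exactly the remark following \eqref{distance}.

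Both measures lie in $\cp_V(\br^d)$, so all integrals are finite. For $P(s,x,s+n\Delta,\cdot)$ this follows from Theorem \ref{thm:E[|X|]}, and for $\mu_{s+n\Delta}$ from membership in $\cm_V(\br^d)$.

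Combining the two steps gives
\[\cw_1(P(s,x,s+n\Delta,\cdot),\mu_{s+n\Delta})\leq 2\beta^{-1}\bar C(1+|x|^p)\re^{-\gamma n\Delta}\leq 4\beta^{-1}\bar C\,V_p(x)\re^{-\gamma n\Delta}.\]
So the statement holds with $C_1:=4\beta^{-1}\bar C$. Since $\bar C,\gamma,\beta$ come from the continuous-time SDE, they depend only on $p$ and the coefficients, and not on $s$, $x$, $n$ or $\Delta$. For completeness, Corollary \ref{cor:W(P(s,x,t,cdot),mut)} itself follows from \eqref{eq: rho_beta(P(s,x,t,cdot),mu_t)} with $n=\lfloor (t-s)/\tau\rfloor$ and $\bar\eta^{n}\leq \bar\eta^{-1}\re^{-\gamma(t-s)}$ for $\gamma=-\tau^{-1}\ln\bar\eta$. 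That derivation is taken as given.
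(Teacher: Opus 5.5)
Your argument is correct, and it is exactly the route the paper intends. The paper gives no separate proof of this proposition, treating it as an immediate consequence of Corollary~\ref{cor:W(P(s,x,t,cdot),mut)} with $t=s+n\Delta$, the comparison $\cw_1\leq\int|x|\,|\mu_1-\mu_2|(\d x)\leq C\rho_\beta$ from the remark after \eqref{distance}, and $1+|x|^p\leq 2V_p(x)$. Your case split at $p=1$ is unnecessary, since $|x|\leq(1+|x|^2)^{1/2}\leq V_p(x)$ for every $p\geq1$, which gives $\cw_1\leq\beta^{-1}\rho_\beta$ directly.
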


\begin{proposition}\label{prop:P hat P}
Suppose that Assumptions \ref{cond:one-side Lip and initial value} and \ref{A:periodic} hold. Then, for every $1<p<\alpha$, there exist constants $C_2>0$ and $\Delta^*>0$ such that for all $s\in\br$, $x\in\br^d$, $\Delta\in(0,\Delta^*]$ and $n\in\bn_0$,
\begin{equation*}
   \cw_1(P(s,x,s+n\Delta,\cdot),\hat{P}(s,x,s+n\Delta,\cdot))\leq C_2 \re^{\bar{\gamma}n\Delta}V_p(x)\Delta^{((\frac{1}{\alpha}\wedge\frac{2}{3})-\frac{1}{3})\wedge\frac{p-1}{3q}},
\end{equation*}
where $\bar{\gamma}$ is introduced in Corollary \ref{cor:X_i-hat{X}_i}. The constant $C_2$ may depend on $p$, but is independent of $s,x,n$ and $\Delta$.
\end{proposition}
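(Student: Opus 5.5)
The statement is Proposition \ref{prop:P hat P}. The plan is to bound the Wasserstein distance by the obvious synchronous coupling and then invoke the grid-point strong error estimate, Corollary \ref{cor:X_i-hat{X}_i}.

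\textbf{Coupling.} Drive the exact solution $X^{s,x}$ of \eqref{SDEs} and the truncated Euler--Maruyama chain \eqref{TEM} by the same $\alpha$-stable process $L$ on the same probability space. By definition, $X^{s,x}_{s+n\Delta}$ has law $P(s,x,s+n\Delta,\cdot)$. By the convention fixed at the start of this section, $\hat X^{s,\pi_\Delta(x)}_{s+n\Delta}$ has law $\hat P(s,x,s+n\Delta,\cdot)$. Hence the pair $\bigl(X^{s,x}_{s+n\Delta},\hat X^{s,\pi_\Delta(x)}_{s+n\Delta}\bigr)$ is a coupling of the two measures.

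\textbf{Reduction to the strong error.} By the definition of $\mathcal W_1$ as an infimum over couplings,
\[
\mathcal W_1\bigl(P(s,x,s+n\Delta,\cdot),\hat P(s,x,s+n\Delta,\cdot)\bigr)\le \mathbb E\bigl[|X^{s,x}_{s+n\Delta}-\hat X^{s,\pi_\Delta(x)}_{s+n\Delta}|\bigr].
\]
Corollary \ref{cor:X_i-hat{X}_i} with $i=n$ bounds the right-hand side by
\[
C_\Xi(1+|x|^p)\,\mathrm e^{\bar\gamma n\Delta}\,\Delta^{((\frac1\alpha\wedge\frac23)-\frac13)\wedge\frac{p-1}{3q}} .
\]

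\textbf{Constants.} Since $1+|x|^p\le 2(1+|x|^2)^{p/2}=2V_p(x)$, the claim follows with $C_2=2C_\Xi$. The threshold $\Delta^*$ is the one from Corollary \ref{cor:X_i-hat{X}_i}, and $C_2$ is independent of $s,x,n,\Delta$.

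\textbf{Hypotheses and the real difficulty.} Remark \ref{remark} shows that Assumption \ref{cond:one-side Lip and initial value}, together with the periodicity in Assumption \ref{A:periodic}, implies Assumptions \ref{A:b} and \ref{A:sigma}. So the exact solution is well defined, regular and has the moments used earlier. The only substantive input is Corollary \ref{cor:X_i-hat{X}_i}, and the hard work, controlling the stopped error and the exit probabilities, was already done in Lemma \ref{lem:E[|X_t-bar X_t|]}. The only point to check here is that the projected initial value $\pi_\Delta(x)$ matches the convention under which that corollary was proved; it does, since $\hat X_0=\pi_\Delta(x)$ there.
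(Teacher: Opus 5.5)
Your proposal is correct and follows the paper's proof: the synchronous coupling bounds $\mathcal W_1$ by $\mathbb E|X_{s+n\Delta}^{s,x}-\hat X_{s+n\Delta}^{s,\pi_\Delta(x)}|$, then Corollary \ref{cor:X_i-hat{X}_i} and $1+|x|^p\le 2V_p(x)$ finish it. The only difference is that the paper handles $n=0$ directly, via $|x-\pi_\Delta(x)|=(|x|-R_\Delta)_+\le |x|^pR_\Delta^{1-p}\le |x|^p\Delta^{\frac{p-1}{3q}}$. It does so because the corollary is obtained from the strong-error theorem with $T=n\Delta$, which formally requires $T>0$; you could equally cover $n=0$ by applying that theorem with any fixed $T>0$.
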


\begin{proof}
For $n\geq1$, we have
\begin{align*}
    \cw_1(P(s,x,s+n\Delta,\cdot),\hat P(s,x,s+n\Delta,\cdot))&\leq\be[|X_{s+n\Delta}^{s,x}-\hat X_{s+n\Delta}^{s,\pi_\Delta(x)}|]\\
    &\leq C_2\re^{\bar\gamma n\Delta}V_p(x)\Delta^{\left(\left(\frac1\alpha\wedge\frac23\right)-\frac13\right)\wedge\frac{p-1}{3q}}.
\end{align*}
For $n=0$, the distance equals $|x-\pi_\Delta(x)|$. The choice $\vp(R)=\ell(1+2R^q)$ and $M\geq3\ell$ gives
\[R_\Delta^q=\frac{M}{2\ell}\Delta^{-\frac{1}{3}}-\frac12\geq\Delta^{-\frac{1}{3}}.\]
Then we obtain
\begin{align*}
    |x-\pi_\Delta(x)|&=(|x|-R_\Delta)_+\leq |x|^pR_\Delta^{1-p}\leq |x|^p\Delta^{\frac{p-1}{3q}}\leq V_p(x)
    \Delta^{\left(\left(\frac1\alpha\wedge\frac23\right)-\frac13\right)\wedge\frac{p-1}{3q}}.
\end{align*}
Therefore, this proves the proposition for all $n\in\bn_0$.
\end{proof}

\begin{theorem}
Suppose that Assumptions \ref{cond:one-side Lip and initial value} and \ref{A:periodic} hold. Then, for every $1<p<\alpha$, there exist constants $\hat C>0$ and $\Delta^*\in(0,1)$ such that, for every $N\in\bn$ satisfying $\Delta:=\frac{\tau}{N}\in(0,\Delta^*]$, there exists a real number $n=n(\Delta)\geq2$ such that, for all $s\in\br$, $x\in\br^d$, and $i\in\bn_0$ with $i\geq n$,
\begin{equation}\label{eq:cw_1(cl(hat X, mu) leq Delta}
    \cw_1(\hat{P}(s,x,s+i\Delta,\cdot),\mu_{s+i\Delta})\leq \hat C V_p(x)\Delta^{\bigl[((\frac{1}{\alpha}\wedge\frac{2}{3})-\frac{1}{3})\wedge\frac{p-1}{3q}\bigr]\zeta}
\end{equation}
and
    \begin{equation}\label{eq:cw(mu,hat mu}
        \cw_1(\hat{\mu}_s,\mu_s)\leq  \hat{C}\Delta^{\bigl[((\frac{1}{\alpha}\wedge\frac{2}{3})-\frac{1}{3})\wedge\frac{p-1}{3q}\bigr]\zeta},
    \end{equation}
where $\zeta=\frac{\gamma}{\gamma+\bar{\gamma}}\in(0,1)$.
\end{theorem}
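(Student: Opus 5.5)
The plan is the standard perturbation argument of \cite{SS00}: split the trajectory into a finite-horizon comparison piece and a long-time contraction piece, then optimize the split point. Write $\kappa:=((\frac1\alpha\wedge\frac23)-\frac13)\wedge\frac{p-1}{3q}$. For $i\geq n$, use the Chapman--Kolmogorov equation \eqref{eq:C-K equation} for the numerical chain to write $\hat P(s,x,s+i\Delta,\cdot)=\hat P^*(s+(i-n)\Delta,s+i\Delta)\hat\nu$, where $\hat\nu:=\hat P(s,x,s+(i-n)\Delta,\cdot)$ is supported in $\bar B(0,R_\Delta)$. Then apply the triangle inequality:
\begin{align*}
\cw_1(\hat P(s,x,s+i\Delta,\cdot),\mu_{s+i\Delta})&\leq\int\cw_1(\hat P(s+(i-n)\Delta,y,s+i\Delta,\cdot),P(s+(i-n)\Delta,y,s+i\Delta,\cdot))\hat\nu(\d y)\\
&\quad+\cw_1(P^*(s+(i-n)\Delta,s+i\Delta)\hat\nu,\mu_{s+i\Delta}).
\end{align*}
The first term uses convexity of $\cw_1$ under mixtures (glue couplings). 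The second uses the representation $\mu_{s+i\Delta}=\int P(s+(i-n)\Delta,y,s+i\Delta,\cdot)\mu_{s+(i-n)\Delta}(\d y)$ together with convexity again. This second term is at most $\int\cw_1(P(s+(i-n)\Delta,y,s+i\Delta,\cdot),\mu_{s+i\Delta})\hat\nu(\d y)$.

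Next I bound the two terms. Proposition \ref{prop:P hat P}, applied with initial time $s+(i-n)\Delta$ (its constants are uniform in $s$), bounds the first term by $C_2\re^{\bar\gamma n\Delta}\Delta^{\kappa}\int V_p\,\d\hat\nu$. Proposition \ref{prop:exact} bounds the second by $C_1\re^{-\gamma n\Delta}\int V_p\,\d\hat\nu$. Lemma \ref{lem:Lyapunov condition for TEM} gives $\int V_p\,\d\hat\nu\leq V_p(x)+2C_0/(p\lambda)\leq C V_p(x)$ uniformly in $i,n,\Delta$. Hence
\[\cw_1(\hat P(s,x,s+i\Delta,\cdot),\mu_{s+i\Delta})\leq CV_p(x)\bigl(\re^{\bar\gamma n\Delta}\Delta^{\kappa}+\re^{-\gamma n\Delta}\bigr).\]
To balance the two terms, choose $n\Delta=\frac{\kappa}{\gamma+\bar\gamma}\ln(1/\Delta)$, i.e.\ $n(\Delta)=\frac{\kappa\ln(1/\Delta)}{(\gamma+\bar\gamma)\Delta}$. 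This is a real number that is $\geq2$ once $\Delta^*$ is small. Then both terms equal $\Delta^{\kappa\gamma/(\gamma+\bar\gamma)}=\Delta^{\kappa\zeta}$, which gives \eqref{eq:cw_1(cl(hat X, mu) leq Delta}. Since the split must use an integer step count, I will take $\lceil n\rceil$ in place of $n$ for $i\geq n$. This changes $n\Delta$ by at most $\Delta\leq1$, so it only costs the constant factor $\re^{\bar\gamma}$.

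For \eqref{eq:cw(mu,hat mu}, take $x=0$ and $i=kN$ with $k$ large, so $i\geq n$. Periodicity gives $\mu_{s+kN\Delta}=\mu_s$, so $\cw_1(\hat\mu_s,\mu_s)\leq\cw_1(\hat\mu_s,\hat P(s,0,s+k\tau,\cdot))+\hat C\Delta^{\kappa\zeta}$. By Theorem \ref{thm:derect pm} the first term is bounded by $\rho_\beta(\hat P(s,0,s+k\tau,\cdot),\hat\mu_s)\to0$ as $k\to\infty$, after converting $\rho_\beta$ to $\cw_1$ via the remark after \eqref{distance}; this needs $|y|\leq V_p(y)$ and both measures in $\cp_V$, which holds by \eqref{eq:mu(V) proof}. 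Letting $k\to\infty$ gives \eqref{eq:cw(mu,hat mu}.

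The main obstacle is justifying the mixture step rigorously. The issue is that $\hat\nu$ is a general (random-initial) law rather than a point mass, so measurability of $y\mapsto\cw_1(\cdot,\cdot)$ and existence of a measurable coupling are needed. I would sidestep this by working directly with processes: run the exact SDE from time $s+(i-n)\Delta$ started at $\hat X_{s+(i-n)\Delta}$, independent of future noise by the Markov property. Then bound $\be|X-\hat X|$ by conditioning and applying Corollary \ref{cor:X_i-hat{X}_i} pointwise in the starting point. The second term is handled via Kantorovich--Rubinstein duality with $1$-Lipschitz test functions, which integrates linearly in $\hat\nu$.
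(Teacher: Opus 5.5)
Your proposal is correct. For the first estimate it is essentially the paper's argument. The paper also splits off the last $j=\lceil n\rceil$ steps, applies Propositions~\ref{prop:exact} and~\ref{prop:P hat P} from the intermediate time $s+(i-j)\Delta$, bounds $\int V_p\,\d\hat\nu$ by Lemma~\ref{lem:Lyapunov condition for TEM}, and balances with the same $n(\Delta)$ at the cost of a factor $\re^{\bar\gamma}$.

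The mixture step you flag as the main obstacle is handled in the paper by the duality route you mention at the end, applied to both terms at once. For each $f\in\mathrm{Lip}_1$, bound $|\hat P(s+(i-j)\Delta,s+i\Delta)f(y)-\mu_{s+i\Delta}(f)|$ pointwise by $[C_1\re^{-\gamma j\Delta}+C_2\re^{\bar\gamma j\Delta}\Delta^{\kappa}]V_p(y)$ (with your $\kappa$), integrate against $\hat\nu$, and only then take the supremum over $f$. So no measurable coupling is needed. The process-level conditioning you sketch would itself need measurable dependence of the SDE solution on a random initial point, so it is an unnecessary detour. Likewise, the representation $\mu_{s+i\Delta}=P^*(\cdot)\mu_{s+(i-n)\Delta}$ plays no role in bounding your second term.

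Where you genuinely differ is the bound on $\cw_1(\hat\mu_s,\mu_s)$.
\begin{itemize}
\item The paper uses invariance, $\hat P^*(s,s+k\tau)\hat\mu_s=\hat\mu_s$ and $\mu_{s+k\tau}=\mu_s$ with $kN\geq j$, and integrates the pointwise first estimate against $\hat\mu_s$. This costs the factor $\hat\mu_s(V_p)\leq 2C_0/(p\lambda)$, obtained from invariance plus the Lyapunov bound. In exchange, it uses nothing about the numerical chain beyond the existence of an invariant law. It would therefore apply to any $\hat P^*(s,s+\tau)$-invariant probability measure, without the convergence part of Theorem~\ref{thm:derect pm}.
\item You instead start from the single point $x=0$ and let $k\to\infty$, using $\hat P(s,0,s+k\tau,\cdot)\to\hat\mu_s$ from Theorem~\ref{thm:derect pm}. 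Because $\Delta$ is frozen during this limit, only qualitative $\cw_1$-convergence is needed; the supports in $\bar B(0,R_\Delta)$ already justify passing from $\rho_\beta$ to $\cw_1$. You also obtain the constant $\hat C V_p(0)=\hat C$ without the extra moment factor.
\end{itemize}
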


\begin{proof}
    Based on the restriction of $\Delta$ in Lemma \ref{lem:Lyapunov condition for TEM}, let \[\Delta^*:=\min\left\{\exp\left(-\frac{2(\gamma+\bar\gamma)}{\left(\left(\frac1\alpha\wedge\frac23\right)-\frac13\right)\wedge\frac{p-1}{3q}}\right),\frac{1}{2},\frac{1}{p\lambda},\frac{\lambda^3}{(4M^2)^3}\right\}.\]
    This choice is independent of $\Delta$ and satisfies $\Delta^*\in(0,1)$. Moreover,  for every $0<\Delta\leq\Delta^*$,
    \[n:=\frac{\bigl[((\frac{1}{\alpha}\wedge\frac{2}{3})-\frac{1}{3})\wedge\frac{p-1}{3q}\bigr]\log \Delta}{-\Delta(\gamma+\bar{\gamma})}\geq \frac{2}{\Delta}\geq 2.\]
    Moreover, we have
    \begin{equation}\label{eq:perturbation_balance}
        \Delta^{((\frac{1}{\alpha}\wedge\frac{2}{3})-\frac{1}{3})\wedge\frac{p-1}{3q}}\re^{\bar{\gamma}n\Delta}=\re^{-\gamma n\Delta}=\Delta^{[((\frac{1}{\alpha}\wedge\frac{2}{3})-\frac{1}{3})\wedge\frac{p-1}{3q}]\zeta}.
    \end{equation}
    By the triangle inequality and Propositions \ref{prop:exact} and \ref{prop:P hat P}, for every $s\in\br$, $x\in\br^d$ and $i\in\bn_0$,
\begin{equation}\label{eq:perturbation_block_estimate}
\begin{aligned}
    \cw_1(\hat P(s,x,s+i\Delta,\cdot),\mu_{s+i\Delta})&\leq\bigl[C_1\re^{-\gamma i\Delta}+C_2\re^{\bar\gamma i\Delta}\Delta^{((\frac1\alpha\wedge\frac{2}{3})-\frac13)\wedge\frac{p-1}{3q}}\bigr]V_p(x).
\end{aligned}
\end{equation}
Iterating Lemma \ref{lem:Lyapunov condition for TEM}, with $\hat X_0=\pi_\Delta(x)$, yields, for every $i\in\bn_0$,
\begin{equation}\label{eq:perturbation numerical moment}
\begin{aligned}
    \be[V_p(\hat{X}_{s+i\Delta}^{s,\pi_\Delta(x)})]&=\int_{\bar{B}(0,R_\Delta)}V_p(y)\hat P(s,x,s+i\Delta,\d y)\\
    &\leq(1-2^{-1}p\lambda\Delta)^iV_p(\pi_\Delta(x))+\frac{2C_0}{p\lambda}(1-(1-2^{-1}p\lambda\Delta)^i)\\
    &\leq(1-2^{-1}p\lambda\Delta)^iV_p(x)+\frac{2C_0}{p\lambda}(1-(1-2^{-1}p\lambda\Delta)^i).
\end{aligned}
\end{equation}
Let
    \[\text{Lip}_1:=\{f:\br^d\to\br: f(0)=0 \text{ and } |f(x)-f(y)|\leq |x-y|\}.\]
Fix $i\in\bn$ and $1\leq j\leq i$. For $f\in\operatorname{Lip}_1$, the Chapman--Kolmogorov equation gives
\begin{align*}
    &~\hat P(s,s+i\Delta)f(x)-\mu_{s+i\Delta}(f)\\
    =&~\int_{\bar{B}(0,R_\Delta)}[\hat P(s+(i-j)\Delta,s+i\Delta)f(y)-\mu_{s+i\Delta}(f)]\hat P(s,x,s+(i-j)\Delta,\d y).
\end{align*}
The Kantorovich--Rubinstein duality formula and \eqref{eq:perturbation_block_estimate} yield
\begin{align*}
    &~\cw_1\bigl(\hat P(s,x,s+i\Delta,\cdot),\mu_{s+i\Delta}\bigr)\\
    \leq&~\left[C_1\re^{-\gamma j\Delta}+C_2\re^{\bar\gamma j\Delta}\Delta^{\left(\left(\frac1\alpha\wedge\frac23\right)-\frac13\right)\wedge\frac{p-1}{3q}}\right]\int_{\bar{B}(0,R_\Delta)}V_p(y)\hat P(s,x,s+(i-j)\Delta,\d y).
\end{align*}
For every $i,j\in\bn$ with $j\leq i$, observe that
\begin{align}\label{eq:inequality}
    (1-2^{-1}p\lambda\Delta)^{i-j}V_p(x)+\frac{2C_0}{p\lambda}
    \left[1-(1-2^{-1}p\lambda\Delta)^{i-j}\right]\leq\frac{2C_0}{p\lambda}V_p(x).
\end{align}
It follows from \eqref{eq:perturbation numerical moment} and \eqref{eq:inequality} that
\begin{equation}\label{eq:W(L(hat Xn),mun) convex}
\begin{aligned}
    \cw_1\bigl(\hat P(s,x,s+i\Delta,\cdot),\mu_{s+i\Delta}\bigr)&\leq\frac{2C_0}{p\lambda}\left[C_1\re^{-\gamma j\Delta}+C_2\re^{\bar\gamma j\Delta}\Delta^{\left(\left(\frac1\alpha\wedge\frac23\right)-\frac13\right)\wedge\frac{p-1}{3q}}\right]V_p(x).
\end{aligned}
\end{equation}
The case $i=j$ is included, since $\hat P(s,x,s,\cdot)=\delta_{\pi_\Delta(x)}$. For every  $i\in\bn_0$ with $i\geq n$, choose $j=\lceil n\rceil$. Then $1\leq j\leq i$ and $n\leq j<n+1$. Therefore, \eqref{eq:perturbation_balance} gives
\begin{align*}
    C_1\re^{-\gamma j\Delta}+C_2\re^{\bar\gamma j\Delta}\Delta^{\left(\left(\frac1\alpha\wedge\frac23\right)-\frac13\right)\wedge\frac{p-1}{3q}}&\leq C_1\re^{-\gamma n\Delta}+C_2\re^{\bar{\gamma}\Delta}\re^{\bar\gamma n\Delta}\Delta^{\left(\left(\frac1\alpha\wedge\frac23\right)-\frac13\right)\wedge\frac{p-1}{3q}}\\
    &\leq(C_1+C_2\re^{\bar{\gamma}})\Delta^{\left[\left(\left(\frac1\alpha\wedge\frac23\right)-\frac13\right)\wedge\frac{p-1}{3q}\right]\zeta}.
\end{align*}
Since $V_p\geq1$, the one-step Lyapunov estimate at initial value $0$ gives
\[1\leq\hat P(s,s+\Delta)V_p(0)\leq1-\frac{p\lambda}{2}\Delta+C_0\Delta.\]
Hence $\frac{2C_0}{p\lambda}\geq1$. Substituting the bound into \eqref{eq:W(L(hat Xn),mun) convex} proves \eqref{eq:cw_1(cl(hat X, mu) leq Delta}, with $\hat C:=\frac{2C_0}{p\lambda}(C_1+C_2\re^{\bar{\gamma}})$.

For fixed $s$ and $\Delta$, the numerical periodic measure is supported on $\bar{B}(0,R_\Delta)$, so $\hat\mu_s(V_p)<\infty$. Using $\hat P^*(s,s+N\Delta)\hat\mu_s=\hat\mu_s$ and integrating \eqref{eq:perturbation numerical moment} with $i=N$ against $\hat\mu_s$ gives
\[\hat\mu_s(V_p)\leq(1-2^{-1}p\lambda\Delta)^N\hat\mu_s(V_p)+\frac{2C_0}{p\lambda}[1-(1-2^{-1}p\lambda\Delta)^N].\]
Since $(1-2^{-1}p\lambda\Delta)^N<1$, we have
\begin{equation}\label{eq:perturbation_periodic_moment}
    \hat\mu_s(V_p)=\int_{\bar{B}(0,R_\Delta)}V_p(x)\hat\mu_s(\d x)\leq\frac{2C_0}{p\lambda}.
\end{equation}
This estimate is uniform in $s,N$, and $\Delta$. Keep $j=\lceil n\rceil$ and choose an integer $k\geq1$ such that $kN\geq j$. Periodicity and the evolution relation give
\[\hat P^*(s,s+k\tau)\hat\mu_s=\hat\mu_s,~~\mu_{s+k\tau}=\mu_s.\]
The Kantorovich--Rubinstein duality formula yields
\begin{align*}
    \cw_1(\hat\mu_s,\mu_s)
    &\leq\int_{\bar{B}(0,R_\Delta)}\cw_1\bigl(\hat P(s,x,s+k\tau,\cdot),\mu_{s+k\tau}\bigr)\hat\mu_s(\d x).
\end{align*}
Integrating \eqref{eq:W(L(hat Xn),mun) convex} with $i=kN$ against $\hat\mu_s$ and using \eqref{eq:perturbation_periodic_moment}, we obtain
\begin{align*}
    \cw_1(\hat\mu_s,\mu_s)
    &\leq\frac{2C_0}{p\lambda}\left[C_1\re^{-\gamma j\Delta}+C_2\re^{\bar\gamma j\Delta}\Delta^{\left(\left(\frac1\alpha\wedge\frac23\right)-\frac13\right)\wedge\frac{p-1}{3q}}\right]\hat{\mu}_s(V_p)\\
    &\leq\frac{4C_0^2}{(p\lambda)^2}(C_1+C_2\re^{\bar{\gamma}})\Delta^{\left[\left(\left(\frac1\alpha\wedge\frac23\right)-\frac13\right)\wedge\frac{p-1}{3q}\right]\zeta}.
\end{align*}
Taking $\hat{C}:=4C_0^2(p\lambda)^{-2}(C_1+C_2\re^{\bar{\gamma}})$ proves \eqref{eq:cw(mu,hat mu}.
\end{proof}

\section*{Acknowledgements}
We acknowledge the financial supports from an EPSRC grant (ref EP/S005293/2).
\addcontentsline{toc}{section}{Acknowledgements}

\addtolength{\itemsep}{-1.5 em}
\setlength{\itemsep}{-3pt}
\footnotesize

\addcontentsline{toc}{section}{References}

\bibliographystyle{plain}  
\bibliography{reference}   

\end{document}